\numberwithin{equation}{section}
\theoremstyle{plain}
\newtheorem{cor}[equation]{Corollary}
\newtheorem{lemma}[equation]{Lemma}
\newtheorem{proposition}[equation]{Proposition}
\newtheorem{thm}[equation]{Theorem}
\theoremstyle{definition}
\newtheorem{definition}[equation]{Definition}
\newtheorem{example}[equation]{Example}
\newtheorem{remark}[equation]{Remark}
\newcommand{\dlabel}[1]{\ifmmode \text{\ttfamily \upshape [#1] } \else
{\ttfamily \upshape [#1] }\fi \label{#1}}
\newcommand{\C}{\operatorname{C} }
\newcommand{\Z}{\operatorname{Z} }
\newcommand{\Id}{\operatorname{Id}}
\newcommand{\gen}[1]{\left < #1 \right >}
\newcommand{\Aut}{\operatorname{Aut} }
\newcommand{\Autb}{\operatorname{Autb} }
\newcommand{\Inn}{\operatorname{Inn} }
\newcommand{\im}{\operatorname{Im} }
\newcommand{\Ker}{\operatorname{Ker} }
\newcommand{\Map}{\operatorname{Map} }
\newcommand{\Autcent}{\operatorname{Autcent} }
\newcommand{\Hol}{\operatorname{Hol} }
\begin{document}
\setcounter{page}{1}
\title[Symmetric  skew braces and brace systems]
{Symmetric  skew braces and brace systems}

\author{Valeriy G. Bardakov}
\address{Sobolev Institute of Mathematics, pr. ak. Koptyuga 4, Novosibirsk, 630090, Russia  and Novosibirsk State University, Novosibirsk, 630090, Russia
and Novosibirsk State Agrarian University, Dobrolyubova street, 160, Novosibirsk, 630039, Russia and Regional Scientific and Educational Mathematical Center of Tomsk State University, 36 Lenin Ave., Tomsk, Russia.}
\email{bardakov@math.nsc.ru}

\author{Mikhail V. Neshchadim}
\address{Sobolev Institute of Mathematics, pr. ak. Koptyuga 4, Novosibirsk, 630090, Russia and Novosibirsk State University, Novosibirsk, 630090, Russia and
Regional Scientific and Educational Mathematical Center of Tomsk State University, 36 Lenin Ave., Tomsk, Russia.}
\email{neshch@math.nsc.ru}

\author{Manoj K. Yadav}
\address{Harish-Chandra Research Institute, A CI of Homi Bhabha National Institute, Chhatnag Road, Jhunsi, Prayagraj-211 019, India}
\email{myadav@hri.res.in}

\subjclass[2010]{16T25, 81R50}
\keywords{Skew left brace,   $\lambda$-anti-homomorphic, $\lambda$-homomorphic, Symmetric skew brace}

\begin{abstract}
For a skew left brace $(G, \cdot, \circ)$, the map $\lambda : (G, \circ) \to \Aut \,(G, \cdot),~~a \mapsto \lambda_a,$
where $\lambda_a(b) = a^{-1} \cdot (a \circ b)$ for all $a, b \in G$,
is a group homomorphism. Then $\lambda$ can also be viewed as a map from $(G, \cdot)$ to $\Aut \, (G, \cdot)$, which, in general, may not be a homomorphism.  A skew left brace will be called $\lambda$-anti-homomorphic ($\lambda$-homomorphic) if   $\lambda : (G, \cdot) \to \Aut \, (G, \cdot)$ is an anti-homomorphism  (a homomorphism).   We mainly study such  skew left braces.  We device a method for constructing a class of binary operations on a given set so that the set with any two such operations constitute a $\lambda$-homomorphic symmetric skew brace. Most of the constructions of symmetric skew braces dealt with in the literature fall in the framework of our construction.   We then carry out various  such constructions on specific infinite groups.

 \end{abstract}
\maketitle

\section{Introduction}

A  triple $(G, \cdot, \circ)$, where $(G, \cdot)$ and $(G, \circ)$ are  groups,    is said to be a \emph{skew left brace} if
 \begin{equation}
 a \circ (b \cdot c) =  (a \circ b) \cdot a^{-1} \cdot  (a \circ c)
 \end{equation}
 for all $a, b, c \in G$, where $ a^{-1}$ denotes the  inverse of $a$ in $(G, \cdot)$. We call  $(G, \cdot)$ the \emph{additive group} and $(G, \circ)$ the \emph{multiplicative  group} of the skew left brace $(G, \cdot, \circ)$. A skew left brace $(G, \cdot, \circ)$ is said to be a \emph{left brace} if $(G, \cdot)$ is an abelian group. By analogy one can define a  skew right brace and right brace. A  triple $(G, \cdot, \circ)$, where $(G, \cdot)$ and $(G, \circ)$ are  groups,    is said to be a \emph{skew right brace} if
 \begin{equation}
(a \cdot b) \circ c =  (a \circ c) \cdot c^{-1} \cdot  (b \circ c)
 \end{equation}
 for all $a, b, c \in G$. A skew right brace $(G, \cdot, \circ)$ is said to be a \emph{right brace} if $(G, \cdot)$ is an abelian group. A  skew  left brace which is also a skew right brace is said to be \emph{two-sided skew brace}.

 In this article we mainly consider  skew left braces. So, we'll mostly suppress the word `left' and only say skew brace(s).

  The concept of  braces was introduced by Rump \cite{R2007} in 2007 in connection with non-degenerate involutive set theoretic solutions of the quantum Yang-Baxter equation.
We remark that, in fact, braces were considered by Kurosh in his lectures \cite[Section 10]{K} of 1969--1970 academic year.  The subject received a tremendous attention of the mathematical community after the work of Rump; see \cite{BCJO18, FC2018,  JKAV21, KSV21, WR2019, AS2018} and the references therein.  Interest in the study of set theoretic solutions of the quantum Yang-Baxter equations was intrigued by the paper \cite{D1992}  of Drinfeld, published in 1992.  Actually, set-theoretic solutions of the Yang-Baxter equation were studied before Drinfeld formulated his question. It was
Joyce \cite{Joyce} and Matveev~\cite{Matveev} who introduced quandles as invariants of knots and links. Every quandle gives a set-theoretic solution of the Yang-Baxter equation.

   The concept of skew  braces was introduced by Guarnieri and Vendramin \cite{GV2017} in 2017 in connection with non-involutive non-degenerate set theoretic solutions of the quantum Yang-Baxter equation. For a skew left brace $(G, \cdot, \circ)$, it was  proved in \cite{GV2017} that the map
$$
\lambda  :  (G, \circ) \to \Aut \, (G, \cdot),~~a \mapsto \lambda_a
$$
is a group homomorphism, where $\Aut \, (G, \cdot)$ denotes the automorphism  group of $(G, \cdot)$ and $\lambda_a$ is given by $\lambda_a(b) = a^{-1} \cdot (a \circ b)$ for all $a, b \in G$.
Obviously, $\lambda$ can also be viewed as a map from $(G, \cdot)$ to $\Aut \, (G, \cdot)$, which, in general, may not be a homomorphism.
We say that  $(G, \cdot, \circ)$ is a $\lambda$-{\it anti-homomorphic skew left brace} ($\lambda$-{\it homomorphic skew left brace)} if the map $\lambda : (G, \cdot) \to \Aut \,(G, \cdot)$ is an anti-homomorphism (a homomorphism). A  $\lambda$-homomorphic skew left brace $(G, \cdot, \circ)$ is said to be a $\lambda$-{\it cyclic skew left brace} if the image $\im \lambda$ is a cyclic subgroup of $\Aut \,(G, \cdot)$.  $\lambda$-homomorphic skew braces of finite order were studied in \cite{CC} and that of infinite order in \cite{BNY22}.

It is a big challenge to describe  all skew  braces of a given order. For example, there are 51 groups of order 32, but there are 1223061  skew  braces and 25281  left braces  of order 32 (see \cite{BNY}). It is not even easy   to describe all skew braces with a given additive group.  So the construction of certain specific classes of skew braces is highly desirable. In the present article our main focus is on the study of the following problem: For a given group $(G, \cdot)$, define  binary operations `$\circ_i$', $i \in I$ such that $(G, \circ_i, \circ_j)$, $i, j \in I \cup \{0\}$ is a  skew  brace, where $\circ_0 = \cdot$.  Various aspects of this problem has already been studied in  \cite{CS21a, CS21b, Chi, Koch20a, Koch22, R19}. 

In Section \ref{BS} we introduce the notion of  brace systems, symmetric brace system, full symmetric brace system and linear brace system.  In Section \ref{antihomo} we observe that $\lambda$-anti-homomorphic skew braces characterize symmetric skew braces. A skew brace $(G, \cdot, \circ)$ is said to be  symmetric if  $(G, \circ, \cdot)$ is also a skew brace. We then find a procedure of constructing a  symmetric brace system.

In Section \ref{lambda} we take up $\lambda$-homomorphic skew braces $(G, \cdot, \circ)$  with $\lambda(G)$ abelian.  We device recursive procedure for constructing symmetric (linear) brace systems on a given  $\lambda$-homomorphic skew brace.  Section \ref{unification} describes a unification method for most of the existing procedures for constructing symmetric skew braces.  Sections \ref{FP} contains many interesting properties of  $\lambda$-homomorphic and $\lambda$-anti-homomorphic skew braces. Specific constructions of skew braces $(G, \cdot, \circ)$ with abelian $\lambda(G)$ are taken up in Section \ref{construction}. In the concluding section we discuss Rota--Baxter groups and skew braces associated with these.

When the additive group of a skew brace is non-abelian, we'll mostly suppress `$\cdot$' and use $ab$ for $a \cdot b$. For  braces,  we denote the additive operation `$\cdot$' by `$+$'. For a skew brace $(G, \cdot, \circ)$, the inverse of $a \in G$ with respect to `$\circ$' is denoted by $\bar a$. For an indexing set $I$, when we deal with many binary operations $\circ_i$, $i \in I$, on $G$ such that $(G, \circ_i)$ is a group, we shall denote the inverse of $a \in G$ with respect to the operation $\circ_i$ by $a^{\circ_i(-1)}$.


 \section{Brace systems} \label{BS}

The concept of a brace block was introduced in \cite{Koch22} and further studied in \cite{CS21b}.  Analogous concept of a multi-brace was introduced in \cite{BG-1}. In this section we intend to introduce a concept, which we will be calling a  brace system,  to generalize the concepts of brace block and multi-brace.

\begin{definition} Let $G$ and $V$ be non-empty sets, $E \subseteq V \times V$.
A brace system on $G$ associated with the pair $(V, E)$ is an algebraic system
$$\mathcal{B}_{V,E}(G) := (G, \{\circ_v\}_{v \in V}),$$ where   $\circ_v : G \times G \to  G$ is a binary operation for all $v \in V$, satisfying the following  conditions:
\begin{enumerate}
\item $(G, \circ_v)$ is a group for all  $v \in V$,
\item $(G, \circ_u, \circ_v)$ is a skew left brace for all pairs $(u, v) \in E$.
\end{enumerate}
\end{definition}

For any non-empty set $V$ and $E \subseteq V \times V$, we can associate  an oriented graph $\Gamma = (V, E')$ with the set of vertices $V$ and there is an oriented edge $e_{uv}$ between $u, v \in V$ if $(u, v) \in E$. More precisely $E' = \{ e_{uv} = (u, v) \in E\}$. Because of this association, the brace system $\mathcal{B}_{V,E}(G)$ can also be denoted as $\mathcal{B}_{\Gamma}(G)$.

Notice that it is a two way passage. For any brace system $\mathcal{B}_{V,E}(G)$ one can construct
 an oriented graph $\Gamma = (V, E')$ with the set of vertices $V$ and there is an edge between $u, v \in V$ if $(G, \circ_u, \circ_v)$ is a skew left brace. The preceding definition shows that each pair $(u, v) \in E$ is an oriented edge. To distinguish,  we denote
 $E' = \{ e_{uv} = (u, v) \in E\}.$

\begin{definition}
Let $\mathcal{B}_{\Gamma}(G)$ be a brace system on $G$ associated to the graph $\Gamma = (V, E)$. Then $\mathcal{B}_{\Gamma}(G)$ is said to be a {\it symmetric brace system} if for each pair $(u, v) \in V \times V$, $e_{uv} \in E$ if and only if $e_{vu} \in E$. A symmetric brace system is said to be a {\it full symmetric brace system} on $G$
if for any pair $(u, v) \in V \times V$, there exists an edge $e_{uv} \in E$.
\end{definition}

One can see that if $\mathcal{B}_{\Gamma}(G)$ is a symmetric brace system, then it defines a non-oriented graph since any edge has an opposite  edge.
If $\mathcal{B}_{\Gamma}(G)$ is a full symmetric brace system, then it defines a full (or complete) non-oriented graph.

\begin{definition} (i) If $V = I$ is a linearly ordered set and $E= \{ (i, j) \in I \times I ~|~ i < j\},$ then
the brace system  $\mathcal{B}_{\Gamma}(G)$ is said to be a {\it linear brace system} and is denoted by $\mathcal{B}_{I}(G)$.

(ii) If $\Gamma = (V, E)$ is a rooted tree, meaning,  there exists a vertex $v_0 \in V$ such that
for any $v \in V$ there exists an oriented path from $v_0$ to $v$, then the brace system
 $\mathcal{B}_{\Gamma}(G)$ is said to be a {\it rooted  brace system}.
\end{definition}

Examples of linear and rooted brace system will be given in upcoming sections.




\section{$\lambda$-anti-homomorphic  skew braces} \label{antihomo}

In this section we introduce the notion of  $\lambda$-anti-homomorphic skew braces and see that these objects characterize symmetric skew braces. For any skew  left brace $(G, \cdot, \circ)$,  it was  proved in \cite{GV2017} that the map
$$
\lambda : (G, \circ) \to \Aut\, (G, \cdot),~~a \mapsto \lambda_a,
$$
where $\lambda_a(b) = a^{-1} (a \circ b)$, is a group homomorphism.
Notice that $\lambda$ can also be viewed as a set map from  $(G, \cdot)$ into $\Aut \,  (G, \cdot)$. Not much is known about the properties of this map. 
The map $\lambda$  together with its domain, the additive group, characterizes the skew brace as a linear $q$-cycle set \cite{R19}.
 We here take up the case when it is an anti-homomorphism, and introduce the following definition.

\begin{definition}
A skew brace $(G, \cdot, \circ)$ is said to be a $\lambda$-{\it anti-homomorphic skew brace} if the map $\lambda : (G, \cdot) \to \Aut \,(G, \cdot)$, defined  by  $\lambda_a(b) = a^{-1} (a \circ b)$, $a, b \in (G,\cdot)$, is an anti-homomorphism.
\end{definition}

The following interesting property follows directly from the definition.

\begin{lemma}
Let $(G, \cdot, \circ)$ be a $\lambda$-anti-homomorphic skew brace. Then for $a \in G$,  $\bar{a} = a^{-1}u_a$ for  some $u_a \in \Ker \, \lambda$, where $\bar{a}$ denotes the inverse of $a$ in $(G, \circ)$.
\end{lemma}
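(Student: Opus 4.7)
My plan is to exploit both structures of $\lambda$ simultaneously: as a genuine homomorphism from $(G,\circ)$ (which is the Guarnieri--Vendramin fact cited just above) and as an anti-homomorphism from $(G,\cdot)$ (which is the hypothesis). Set $u_a := a \cdot \bar{a}$, so that $\bar{a} = a^{-1} \cdot u_a$ by construction; the whole content of the lemma is therefore the claim that this particular $u_a$ lies in $\Ker \lambda$.

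First, I would apply the anti-homomorphism property to the product $a \cdot \bar{a}$, giving
\[
\lambda_{u_a} \;=\; \lambda_{a \cdot \bar{a}} \;=\; \lambda_{\bar{a}} \, \lambda_{a}.
\]
Next, I would recall that the identity element is common to $(G,\cdot)$ and $(G,\circ)$ in any skew brace, so $a \circ \bar{a}$ equals the additive identity. Since $\lambda$ is a homomorphism from $(G,\circ)$, this forces $\lambda_a \lambda_{\bar{a}} = \Id$, i.e.\ $\lambda_{\bar{a}} = \lambda_a^{-1}$. Substituting this into the previous display yields $\lambda_{u_a} = \lambda_a^{-1} \lambda_a = \Id$, so $u_a \in \Ker \lambda$ as required.

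Because the argument is purely formal—it only juggles the two algebraic identities $\lambda_{ab}=\lambda_b\lambda_a$ and $\lambda_{a\circ b}=\lambda_a\lambda_b$—there is really no obstacle to overcome. The only point that merits a sentence of verification in the write-up is the coincidence of identity elements in $(G,\cdot)$ and $(G,\circ)$, which one reads off from the brace axiom $a\circ(b\cdot c)=(a\circ b)\cdot a^{-1}\cdot(a\circ c)$ by specializing; everything else is a two-line manipulation.
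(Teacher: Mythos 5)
Your proposal is correct and follows essentially the same route as the paper: the paper's proof also uses that $\lambda$ is a homomorphism on $(G,\circ)$ to get $\lambda_{\bar a}=\lambda_a^{-1}=\lambda_{a^{-1}}$, and the conclusion that $u_a=a\cdot\bar a\in\Ker\lambda$ is exactly the anti-homomorphism computation $\lambda_{a\cdot\bar a}=\lambda_{\bar a}\lambda_a=\Id$ that you spell out. The only cosmetic difference is that the paper reads off $\lambda_{\bar a}=\lambda_a^{-1}$ directly from $\bar a$ being the $\circ$-inverse, whereas you re-derive it via the coincidence of identity elements.
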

\begin{proof}
As we know $\lambda : (G, \circ) \to \Aut\, (G, \cdot)$,  $a \mapsto \lambda_a$, is a group homomorphism. So by the given hypothesis,   for any $a \in G$, we have
$$\lambda_{\bar a} = \lambda^{-1}_a = \lambda_{a^{-1}},$$
which proves the required assertion.
\end{proof}

We remark that the skew braces  $(G, \cdot, \circ)$ in which the map $\lambda$ is a homomorphism from  $(G, \cdot)$ into $\Aut \,  (G, \cdot)$ have been investigated earlier by some authors \cite{CC}, \cite{BNY22}. We'll come back to it little later. Before we proceed, let us see an example.

\begin{example}\label{example1}
Let $G = (G, \cdot)$ be any group. Define an operation  `$\circ$' on the set  $G$  by the formula
$$
a\circ b=a  \lambda_{a}(b) = ba
$$
for all $a, b \in G$. Then it can be readily  verified that $(G, \cdot, \circ)$ is a skew brace. Now the map $\lambda : G \to \Aut \, G$, $a \mapsto \lambda_a$, where
$$\lambda_a(b) = a^{-1}(a \circ b) = a^{-1} b a \;\; \mbox{for all $b \in G$},$$
is an anti-homomorphism. Hence $(G, \cdot, \circ)$ is a $\lambda$-anti-homomorphic skew brace.
\end{example}

Since the skew braces of the preceding example can be  naturally defined for any group $G$, we would like to give those some name.

\begin{definition}
A skew brace $(G, \cdot, \circ)$ is said to be  {\it natural $\lambda$-anti-homomorphic  skew brace} if  $\circ = \cdot^{op}$.
\end{definition}

We'll show below that  $\lambda$-anti-homomorphic  skew braces can be characterised in terms of  natural $\lambda$-anti-homomorphic  skew brace.

Let $(G, \cdot, \circ)$ is a $\lambda$-anti-homomorphic skew brace. Then for all $a, b, c \in G$,
\begin{eqnarray*}
\lambda_{a \circ b}(c) &=& (a \circ b)^{-1} (a \circ b \circ c)\nonumber\\
&=& (a \circ b)^{-1} (a \circ (b \lambda_b (c)))\nonumber\\
&=& a^{-1} a \lambda_a(\lambda_b(c)) \nonumber\\
&=& \lambda_a(\lambda_b(c)) \nonumber\\
&=& \lambda_{b \cdot a}(c)\;\; \mbox{   (by $\lambda$-anti-homomorphic property)}.
\end{eqnarray*}
Hence
\begin{equation}\label{anti-eq1}
\lambda_{a \circ b} = \lambda_{b \cdot a} \;\; \mbox{ for all $a, b \in G$}.
\end{equation}
Now we recall the definition of a symmetric skew brace, which is also known by the name bi-skew brace.

\begin{definition}
A skew brace $(G, \cdot, \circ)$ is said to be a {\it symmetric skew brace} if  $(G, \circ, \cdot)$ is also a skew brace.
\end{definition}

The necessary and sufficient condition for a skew brace to become symmetric skew brace is given in the following result \cite[Proposition 5.2, Remark 5.5]{BNY22}.

\begin{proposition}\label{prop1}
A skew brace  $(G, \cdot, \circ)$ is  a symmetric skew brace  if and only if
\begin{eqnarray} \label{inc1}
\lambda_{a \circ b } = \lambda_{b \cdot a } \;\; \mbox{for all $a, b \in G$}.
\end{eqnarray}
\end{proposition}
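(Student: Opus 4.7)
The plan is to directly expand the symmetric skew brace axiom using the identity $x \circ y = x \cdot \lambda_x(y)$ and match it against (\ref{inc1}). Before computing, I would first note that since $\lambda : (G, \circ) \to \Aut(G, \cdot)$ is always a group homomorphism, one has $\lambda_{a \circ b} = \lambda_a \lambda_b$; hence the identity (\ref{inc1}) is equivalent to the single condition $\lambda_{b \cdot a} = \lambda_a \lambda_b$ for all $a, b \in G$, i.e.\ to the $\lambda$-anti-homomorphism property. This is what needs to be produced.

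Next I would unwind what it means for $(G, \circ, \cdot)$ to be a skew brace, which is the defining equation
$$a \cdot (b \circ c) = (a \cdot b) \circ \bar a \circ (a \cdot c)$$
for all $a,b,c \in G$. The left side is immediate: $a \cdot (b\lambda_b(c)) = ab\lambda_b(c)$. For the right side the pivotal auxiliary identity is
$$\bar a \circ (a \cdot c) = \lambda_a^{-1}(c),$$
which is obtained by applying $\lambda_a$ to both sides and using $a \circ \bar a = e$ together with the defining formula for $\lambda_a$. Substituting this and using associativity of $\circ$, the right side of the symmetric axiom becomes $(ab) \circ \lambda_a^{-1}(c) = ab \cdot \lambda_{ab}(\lambda_a^{-1}(c))$.

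Equating the two sides and cancelling $ab$, the symmetric skew brace axiom reduces to $\lambda_b(c) = \lambda_{ab}(\lambda_a^{-1}(c))$ for all $a,b,c$, equivalently $\lambda_{ab} = \lambda_b \lambda_a$ on all of $G$, which by the opening observation is just (\ref{inc1}). Every step of this calculation is reversible, so both implications drop out of one argument.

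The main obstacle I expect is spotting the clean reformulation $\bar a \circ (a \cdot c) = \lambda_a^{-1}(c)$; without it, the right-hand side of the symmetric axiom stays an unwieldy triple $\circ$-product. Once this identity is in hand the equivalence reduces to a short algebraic rearrangement, and no separate reverse direction needs to be treated.
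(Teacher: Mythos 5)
Your proof is correct, and it is self-contained where the paper is not: the paper simply quotes this equivalence from [BNY22, Proposition 5.2, Remark 5.5] without reproving it, so there is no internal proof to compare against. Your route is the natural one and all the steps check out: the pivotal identity $\bar a \circ (a\cdot c)=\lambda_a^{-1}(c)$ follows from $\bar a=\lambda_a^{-1}(a^{-1})$ and $\lambda_{\bar a}=\lambda_a^{-1}$ (or, as you suggest, from $a\circ(\bar a\circ(ac))=ac$ and $a\circ x=a\lambda_a(x)$), and with it the compatibility condition for $(G,\circ,\cdot)$ reduces, after cancelling $ab$, to $\lambda_{a\cdot b}=\lambda_b\lambda_a$ for all $a,b$; combined with the always-valid $\lambda_{a\circ b}=\lambda_a\lambda_b$ this is exactly condition \eqref{inc1}, and every step is an equivalence, so no separate converse is needed. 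This is also consonant with how the paper uses the result: its derivation of \eqref{anti-eq1} in Section 3 is precisely the computation showing that the anti-homomorphism property $\lambda_{a\cdot b}=\lambda_b\lambda_a$ yields $\lambda_{a\circ b}=\lambda_{b\cdot a}$, and your argument supplies the remaining equivalence with the symmetric (bi-skew) brace axiom that the paper outsources. One small presentational point: when you say the identity is "obtained by applying $\lambda_a$ to both sides," it is worth phrasing this via $a\circ x=a\lambda_a(x)$ rather than applying $\lambda_a$ to a $\circ$-product, since $\lambda_a$ is only an automorphism of $(G,\cdot)$; as written your parenthetical derivation is easily repaired and the identity itself is correct.
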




Let $(G, \cdot, \circ)$ be a symmetric skew brace. To differentiate the maps $\lambda$, we write
$\lambda^{\circ}$ for the map $\lambda: (G, \circ) \to \Aut \, (G, \cdot)$ and $\lambda^{\cdot}$ for $\lambda: (G, \cdot) \to \Aut \, (G, \circ)$. Rewriting \eqref{inc1} with the new notation, we get
\begin{equation}\label{eq2}
\lambda^{\circ}_{a \circ b } = \lambda^{\circ}_{b \cdot a } \;\; \mbox{for all $a, b \in G$}.
\end{equation}

Analogously we can also show that
\begin{equation}\label{eq3}
\lambda^{\cdot}_{a \cdot b } = \lambda^{\cdot}_{b \circ a } \;\; \mbox{for all $a, b \in G$}.
\end{equation}

We can now prove

\begin{proposition}\label{anti-prop2}
Every  $\lambda$-anti-homomorphic  skew brace  is symmetric. On the other hand if  $(G, \cdot, \circ)$ is a symmetric skew brace, then  both $(G, \cdot, \circ)$ and $(G, \circ, \cdot)$ are $\lambda$-anti-homomorphic skew braces.
\end{proposition}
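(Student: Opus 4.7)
The plan is to read off the forward implication directly from the computation already displayed just before the proposition, and then to obtain the converse by a symmetric application of identities \eqref{eq2} and \eqref{eq3} together with the fact that, for any skew brace, the associated $\lambda$ map out of the \emph{multiplicative} group is a genuine group homomorphism.

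For the forward direction, assume $(G, \cdot, \circ)$ is $\lambda$-anti-homomorphic. The displayed calculation immediately above gives $\lambda_{a \circ b} = \lambda_{b \cdot a}$ for all $a, b \in G$, which is precisely identity \eqref{inc1}. Invoking Proposition \ref{prop1}, this identity is equivalent to $(G, \cdot, \circ)$ being symmetric, and we are done with the first statement.

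For the converse, suppose $(G, \cdot, \circ)$ is symmetric. By the definition of a skew brace, $\lambda^{\circ} : (G, \circ) \to \Aut(G, \cdot)$ is a group homomorphism, so $\lambda^{\circ}_{a \circ b} = \lambda^{\circ}_a \lambda^{\circ}_b$. Combining this with \eqref{eq2} yields $\lambda^{\circ}_{b \cdot a} = \lambda^{\circ}_a \lambda^{\circ}_b$; after relabeling, $\lambda^{\circ}_{a \cdot b} = \lambda^{\circ}_b \lambda^{\circ}_a$, which is exactly the statement that $\lambda^{\circ} : (G, \cdot) \to \Aut(G, \cdot)$ is an anti-homomorphism. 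Hence $(G, \cdot, \circ)$ is $\lambda$-anti-homomorphic. Running the same argument with the roles of `$\cdot$' and `$\circ$' interchanged---using that $\lambda^{\cdot} : (G, \cdot) \to \Aut(G, \circ)$ is a homomorphism since $(G, \circ, \cdot)$ is a skew brace, together with \eqref{eq3}---shows that $\lambda^{\cdot} : (G, \circ) \to \Aut(G, \circ)$ is an anti-homomorphism, so $(G, \circ, \cdot)$ is $\lambda$-anti-homomorphic as well.

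There is no real obstacle here; the proposition is a formal corollary of Proposition \ref{prop1} and the two identities \eqref{eq2}--\eqref{eq3} that have already been set up. The only point requiring care is bookkeeping: one must keep $\lambda^{\circ}$ and $\lambda^{\cdot}$ straight, remember that each is a homomorphism only when restricted to the multiplicative group of the relevant skew brace structure, and match the anti-homomorphism condition to the correct domain in each of the two assertions.
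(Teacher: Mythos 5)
Your proposal is correct and follows essentially the same route as the paper: the forward direction is read off from the displayed identity \eqref{anti-eq1} together with Proposition \ref{prop1}, and the converse combines \eqref{eq2} (resp. \eqref{eq3}) with the fact that $\lambda^{\circ}$ (resp. $\lambda^{\cdot}$) is a homomorphism on the multiplicative group, exactly as in the paper's proof. The bookkeeping of domains and codomains for $\lambda^{\circ}$ and $\lambda^{\cdot}$ is handled correctly.
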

\begin{proof}
The first assertion follows from Proposition \ref{prop1} using \eqref{anti-eq1}. Now assume that $(G, \cdot, \circ)$ is a symmetric skew brace. Then \eqref{eq2} and \eqref{eq3} hold true. So by \eqref{eq2} we have
$$\lambda^{\circ}_{a \cdot b } = \lambda^{\circ}_{b \circ a } = \lambda^{\circ}_b \lambda^{\circ}_a \;\; \mbox{for all $a, b \in G$}.$$
This shows that $\lambda^{\circ} : (G, \cdot) \to \Aut \, (G, \cdot)$, $a \mapsto \lambda^{\circ}_a$, is an anti-homomorphism. Hence  $(G, \cdot, \circ)$ is a  $\lambda$-anti-homomorphic skew brace.  That $(G, \circ, \cdot)$ is a $\lambda$-anti-homomorphic skew brace can be seen analogously using \eqref{eq3}.
\end{proof}

\begin{remark}
The preceding proposition characterizes symmetric skew braces in terms of $\lambda$-anti-homorphic skew braces.
\end{remark}

We remark that a similar characterization of symmetric skew braces (under the name bi-skew braces) in terms of $\gamma$-functions has been recently obtained in \cite[Theorem 3.1]{Ca}.

Let us see another example, obtained through exact factorization (\cite[Section 5]{BNY22}). Recall that a group $G$ factorizes through two subgroups $A$ and $B$ if
$$G = A B =\{ ab \mid a \in A, b \in B \}.$$
 The factorization  is said to be exact if $A \cap B = 1$. The following construction  was carried out in \cite{SV}.
Let a group $G$ admit an exact factorization $AB$. Define the operation `$\circ$' on  $G$ by the rule: If $a_1b_1, a_2b_2 \in G$, where $a_1, a_2 \in A$, $b_1, b_2 \in B$,
then
\begin{equation}\label{efeqn1}
(a_1b_1)\circ (a_2b_2)=a_1a_2b_2b_1.
\end{equation}
It is proved in \cite[Theorem 2.3]{SV} that  $(G, \cdot, \circ)$  is a skew brace.  It is easy to see that $(G, \circ)$ is isomorphic to the direct product  $A \times B_{op}$, where  $B_{op}$ is the opposite group of $B$.  If $B$ is abelian, then obviously $(G, \circ) \cong A \times B$. Notice that  `$\circ$' and   `$\cdot$' coincide on $A$.

The  homomorphism  $\lambda : (G, \circ) \to \Aut \, (G, \cdot)$ gives automorphisms $\lambda_z$ of $(G, \cdot)$ for all  $z \in (G, \circ)$ defined by
$$
\lambda_z (y)=z^{-1}(z\circ y), \quad y \in G.
$$
If $z=a_1b_1$, $y=a_2b_2$, where  $a_1,a_2 \in A$, $b_1,b_2 \in B$,
then
$$
\lambda_z (y)=(a_1b_1)^{-1}a_1a_2b_2b_1=b_1^{-1} yb_1.
$$
So, the automorphism  $\lambda_z$ is  the inner automorphism of $G$ induced by the element  $b_1$, where $z = a_1b_1$.

Let  $G = AB$ be as above with $A$ normal in $G$. We  now consider the map $\lambda: G \to \Aut G$ given by $z \mapsto \lambda_z$, where $\lambda_z$ is as defined in the preceding para. A straightforward computation shows that $\lambda$ is an anti-homomorphism. Thus, by Proposition  \ref{anti-prop2}, we have

\begin{proposition}\label{efprop1}
The skew brace $(G, \cdot, \circ)$, constructed above, is   $\lambda$-anti-homomorphic; hence  symmetric.
\end{proposition}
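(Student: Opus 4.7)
The plan is to verify directly that the map $\lambda : (G,\cdot) \to \Aut(G,\cdot)$ constructed above is an anti-homomorphism, and then appeal to Proposition \ref{anti-prop2} to conclude that $(G,\cdot,\circ)$ is symmetric. Recall from the paragraph preceding the proposition that for $z = a_1 b_1 \in G = AB$, the automorphism $\lambda_z$ depends only on the $B$-component of $z$, being the inner automorphism $\lambda_z(y) = b_1^{-1} y b_1$ of $(G,\cdot)$. So everything reduces to tracking how the $B$-component behaves under $\cdot$-multiplication.

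First I would take two arbitrary elements $z = a_1 b_1$ and $w = a_2 b_2$ with $a_i \in A$, $b_i \in B$, and compute the factorization of $zw$. Since $A$ is normal in $G$, the element $b_1 a_2 b_1^{-1}$ lies in $A$, so
\[
zw \;=\; a_1 b_1 a_2 b_2 \;=\; \bigl(a_1 (b_1 a_2 b_1^{-1})\bigr) (b_1 b_2),
\]
which expresses $zw$ in the unique form $a' b'$ with $a' \in A$ and $b' = b_1 b_2 \in B$. Hence the $B$-component of $zw$ is simply $b_1 b_2$.

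Next I would use the formula for $\lambda$ from the preamble to the proposition. With the factorization just obtained, $\lambda_{zw}(y) = (b_1 b_2)^{-1} y (b_1 b_2) = b_2^{-1}(b_1^{-1} y b_1) b_2 = \lambda_w(\lambda_z(y))$ for every $y \in G$. Thus $\lambda_{zw} = \lambda_w \lambda_z$, which is precisely the statement that $\lambda : (G,\cdot) \to \Aut(G,\cdot)$ is an anti-homomorphism. Therefore $(G,\cdot,\circ)$ is a $\lambda$-anti-homomorphic skew brace, and by the first assertion of Proposition \ref{anti-prop2} it is symmetric.

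The only mildly delicate point is the step where $zw$ is rewritten in $AB$-factored form; this is exactly where the normality hypothesis on $A$ enters, since without it the $B$-part of $zw$ need not be $b_1 b_2$ and the anti-homomorphism identity breaks down. Everything else is a direct application of the formula $\lambda_{a_1 b_1}(y) = b_1^{-1} y b_1$ already established above, together with Proposition \ref{anti-prop2}.
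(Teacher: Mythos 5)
Your proof is correct and follows the paper's own route: the paper asserts that "a straightforward computation shows that $\lambda$ is an anti-homomorphism" (using the normality of $A$ to see the $B$-component of a product $a_1b_1\cdot a_2b_2$ is $b_1b_2$) and then invokes Proposition \ref{anti-prop2}, which is exactly what you did, with the computation written out explicitly. No gaps.
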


We now device a method for constructing a $\lambda$-anti-homomorphic skew brace from a given anti-homomorphism of a group  into its automorphism group.  Let $G = (G, \cdot)$ be a group and $\lambda : G \to \Aut \, G$ be an anti-homomorphism such that $a \lambda_a(b)a^{-1}b^{-1} \in \Ker \, \lambda$ for all  $a, b \in G$. Then we can define a group $(G, \circ)$ such that, for  $a, b \in G$,
$$a \circ b = a \lambda_a(b).$$
Notice that $\bar a$, the inverse of $a$ with respect to `$\circ$' is $\lambda^{-1}_a(a^{-1})$, where, as usual, $\lambda^{-1}_a$ is the inverse of the automorphism $\lambda_a$. An easy verification now shows that $(G, \cdot, \circ)$ is a $\lambda$-anti-homomorphic  skew brace, and therefore a symmetric skew brace by Proposition \ref{anti-prop2}.  We have proved

\begin{thm}\label{t2}
Let $G = (G, \cdot)$ be a group and $\lambda : G \to \Aut \, G$ be an anti-homomorphism such that $a \lambda_a(b)a^{-1}b^{-1} \in \Ker \, \lambda$ for all  $a, b \in G$. Then $(G, \cdot, \circ)$ is a  $\lambda$-anti-homomorphic skew brace, where $a \circ b = a \lambda_a(b)$ for all $a, b \in G$.
\end{thm}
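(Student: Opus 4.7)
The plan is to verify the three things needed: that $(G,\circ)$ is a group, that the skew brace axiom holds, and that $\lambda$ is an anti-homomorphism from $(G,\cdot)$ into $\Aut(G,\cdot)$. The last is the hypothesis, so the content lies in the first two, and the first is where the kernel condition is crucial.

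I would first dispose of the skew brace identity, since it is essentially automatic once $(G,\circ)$ is a group. Using the definition $a\circ b = a\lambda_a(b)$ and the fact that $\lambda_a \in \Aut(G,\cdot)$,
\[
a\circ(b\cdot c)=a\lambda_a(b\cdot c)=a\lambda_a(b)\lambda_a(c)=\bigl(a\lambda_a(b)\bigr)\cdot a^{-1}\cdot\bigl(a\lambda_a(c)\bigr)=(a\circ b)\cdot a^{-1}\cdot (a\circ c),
\]
which is precisely the defining axiom of a skew left brace.

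Next I would check the group axioms for $(G,\circ)$. Since $\lambda$ is an anti-homomorphism of $(G,\cdot)$, we have $\lambda_e=\Id$, so $e\circ a=\lambda_e(a)=a$ and $a\circ e=a\lambda_a(e)=a$; hence $e$ is a two-sided identity. For right inverses, solving $a\circ x=e$ forces $x=\lambda_a^{-1}(a^{-1})$, and a direct substitution verifies $a\circ\lambda_a^{-1}(a^{-1})=a\cdot a^{-1}=e$. The decisive step is associativity: expanding both sides,
\[
(a\circ b)\circ c=a\lambda_a(b)\lambda_{a\lambda_a(b)}(c),\qquad a\circ(b\circ c)=a\lambda_a(b)\lambda_a\lambda_b(c),
\]
so associativity reduces to $\lambda_{a\lambda_a(b)}=\lambda_a\lambda_b$. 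Using the anti-homomorphism property $\lambda_a\lambda_b=\lambda_{b\cdot a}$, this in turn reduces to $\lambda_{a\lambda_a(b)}=\lambda_{b\cdot a}$, i.e.\ $a\lambda_a(b)\cdot(ba)^{-1}=a\lambda_a(b)a^{-1}b^{-1}\in\Ker\lambda$, which is exactly the hypothesis. Once associativity, a two-sided identity, and right inverses are in hand, a standard semigroup argument (if $a\circ y=e$ and $y\circ z=e$, then $y\circ a=(y\circ a)\circ e=(y\circ a)\circ(y\circ z)$ and one concludes $z=a$) upgrades right inverses to two-sided inverses, so $(G,\circ)$ is a group.

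The main obstacle is the associativity check, and I expect the proof to hinge entirely on unravelling the identity $\lambda_{a\lambda_a(b)}=\lambda_{b\cdot a}$ via the kernel hypothesis together with the anti-homomorphism property; the remaining group axioms and the skew brace equation are routine. The final claim, that $\lambda:(G,\cdot)\to\Aut(G,\cdot)$ is an anti-homomorphism, is immediate from the hypothesis, so the resulting structure is $\lambda$-anti-homomorphic by definition (and then symmetric by Proposition \ref{anti-prop2}, although this is not part of what is being asked).
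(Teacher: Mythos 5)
Your proposal is correct and matches the paper's approach: the paper defines $\circ$ via $a\circ b=a\lambda_a(b)$, notes the inverse $\lambda_a^{-1}(a^{-1})$, and leaves the rest as an ``easy verification,'' which is exactly the direct check you carry out. Your key reduction of associativity to $\lambda_{a\lambda_a(b)}=\lambda_{b\cdot a}$ via the kernel hypothesis is precisely the point the paper's verification rests on, so there is nothing genuinely different here.
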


It now follows that $\lambda: (G, \circ) \to \Aut \,(G, \cdot)$, $a \mapsto \lambda_a$, is a homomorphism, where
$\lambda_a(b) = a^{-1}\cdot (a \circ b)$ for all $a, b \in (G, \circ)$. We first show that $\lambda(G)$ is a subgroup of $\Aut \,(G, \circ)$.  To establish this,  it is  sufficient to show that $\lambda_a(b \circ c) = \lambda_a (b) \circ \lambda_a(c)$ for all $a, b, c \in  (G, \circ)$, which can be easily proved by a direct computation using the fact that $\lambda$ is an anti-homomorphism and $\lambda_{a^{-1}} = (\lambda_a)^{-1}$ for all $a \in G$.

It now follows that  $\lambda : (G, \circ) \to \Aut \,(G, \circ)$, $a \mapsto \lambda_a$, is also a homomorphism. Notice that, as a map, this homomorphism coincides with the map $\lambda : (G, \cdot) \to \Aut \,(G, \cdot)$.
Define a map ${\bar \lambda} : (G, \circ) \to \Aut \,(G, \circ)$, $a \mapsto {\bar \lambda}_a$ by
$${\bar \lambda}_a :=  \lambda_{\bar a}  = \lambda^{-1}_a \;\;\mbox{ for all $a \in (G, \circ)$}.$$
Then it easily follows that $\bar \lambda$ is an anti-homomorphism and
\begin{eqnarray*}
{\bar \lambda}(a \circ {\bar \lambda}_a(b) \circ {\bar a} \circ{\bar b}) &=& \lambda_{\overline{a \circ {\bar \lambda}_a(b) \circ {\bar a} \circ {\bar b}}}\\
&=& \lambda_{b \circ a \circ  \overline{{\bar \lambda}_a(b)} \circ  {\bar a}}\\
&=& \lambda_{a^{-1}  \lambda_{a^{-1}}(b^{-1})  a b}\\
&=& \Id,
\end{eqnarray*}
since $a \lambda_a(b)a^{-1}b^{-1} \in \Ker \, \lambda$ for all  $a, b \in G$. So we can now apply Theorem \ref{t2} to get a symmetric skew brace $(G, \circ, \circ_1)$, where $\circ_1$ is defined as
$$a \circ_1 b := a \circ {\bar \lambda}_a(b) \;\;\mbox{ for all $a, b \in (G, \circ)$}.$$
Notice that for $a, b \in (G, \circ_1)$, we get
$$a \circ_1 b = a \circ  {\bar \lambda}_a(b) = a \cdot \lambda({\bar \lambda}_a(b)) = a \cdot b.$$
Hence $(G, \circ_1) = (G, \cdot)$. We remark that further iteration will be a repetition, and will give no new skew brace.


\bigskip

In the following result  we find a criterion to obtain a (symmetric) skew brace from two $\lambda$-anti-homomorphic skew braces with a common binary operation. For two subgroups $H$ and $K$ of a group $G$, by $[H, K]$ we denote the  subgroup of $G$ generated by the set $\{[h, k] \mid h \in H, k \in K\}$. Please do not get confused with the notation $[G, \lambda(G)]$ defined in \eqref{inc}.

\begin{lemma}\label{anti-link}
Let $(G, \cdot, \circ)$ and $(G, \cdot, \star)$  be  two $\lambda$-anti-homomorphic skew braces such that $[\lambda^{\circ}(G, \circ), \lambda^{\star}(G, \star)] = \Id$  in $\Aut \, (G, \cdot)$. Then $(G, \circ, \star)$ is a skew brace if and only if (i) $[(G, \cdot), \lambda^{\star}(G, \star)] \subseteq \Ker \, \lambda^{\circ}$. Moreover, $(G, \circ, \star)$ is a symmetric skew brace if  and only if in addition to (i)  we also have (ii) $[(G, \cdot), \lambda^{\circ}(G, \circ)] \subseteq \Ker \, \lambda^{\star}$.
\end{lemma}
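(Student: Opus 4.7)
The plan is to translate the hypothesis and the inclusions (i), (ii) into usable identities for $\lambda^\circ$ and $\lambda^\star$ and then verify the defining identity of $(G, \circ, \star)$ directly. By Theorem \ref{t2}, both skew braces can be written in the canonical form
\[
a \circ b = a \cdot \lambda^\circ_a(b), \qquad a \star b = a \cdot \lambda^\star_a(b) \qquad (a, b \in G);
\]
the $\lambda$-anti-homomorphic property supplies the identities $\lambda^\circ_{xy} = \lambda^\circ_y \lambda^\circ_x$ and $\lambda^\star_{xy} = \lambda^\star_y \lambda^\star_x$ on $(G, \cdot)$, while the standing hypothesis $[\lambda^\circ(G, \circ), \lambda^\star(G, \star)] = \Id$ says that every $\lambda^\circ_x$ commutes with every $\lambda^\star_y$ in $\Aut(G, \cdot)$. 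A short computation with the anti-homomorphism property shows that (i) is equivalent to the identity $\lambda^\circ_{\lambda^\star_a(b)} = \lambda^\circ_b$ for all $a, b \in G$, and symmetrically (ii) is equivalent to $\lambda^\star_{\lambda^\circ_a(b)} = \lambda^\star_b$ for all $a, b \in G$.

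Next I would compute both sides of the skew brace axiom
\[
a \star (b \circ c) = (a \star b) \circ a^{\circ(-1)} \circ (a \star c)
\]
for the candidate triple $(G, \circ, \star)$. Unfolding the left hand side and using the commutation hypothesis to swap $\lambda^\star_a$ past $\lambda^\circ_b$ yields
\[
L := a \cdot \lambda^\star_a(b) \cdot \lambda^\circ_b \lambda^\star_a(c).
\]
For the right hand side, set $u = a \star b$, use that $\lambda^\circ$ is a homomorphism on $(G, \circ)$ and that $\lambda^\circ_{a^{\circ(-1)}} = (\lambda^\circ_a)^{-1}$, so the expression expands to $u \cdot \lambda^\circ_u(a^{\circ(-1)}) \cdot \lambda^\circ_u (\lambda^\circ_a)^{-1}(a \star c)$; then the anti-homomorphism identity $\lambda^\circ_u = \lambda^\circ_{\lambda^\star_a(b)} \lambda^\circ_a$ combined with $\lambda^\circ_a(a^{\circ(-1)}) = a^{-1}$ collapses this to
\[
R := a \cdot \lambda^\star_a(b) \cdot \lambda^\circ_{\lambda^\star_a(b)} \lambda^\star_a(c).
\]

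Comparing $L$ and $R$, cancelling the common prefix $a \cdot \lambda^\star_a(b)$ in $(G, \cdot)$, and noting that $d := \lambda^\star_a(c)$ ranges over all of $G$ as $c$ does, I conclude that $L = R$ for every triple $a, b, c$ if and only if $\lambda^\circ_b(d) = \lambda^\circ_{\lambda^\star_a(b)}(d)$ for all $a, b, d \in G$, which is exactly the reformulation of (i) above. This proves the first assertion. For the second, $(G, \circ, \star)$ is symmetric if and only if $(G, \star, \circ)$ is also a skew brace, and applying the identical argument with the roles of $\circ$ and $\star$ interchanged yields the reformulation of (ii). The main obstacle is bookkeeping: $\lambda^\circ$ is simultaneously a homomorphism on $(G, \circ)$ and an anti-homomorphism on $(G, \cdot)$ (and similarly for $\lambda^\star$), so one must be careful to invoke the correct identity for the correct group operation at each step; once that is done, the algebra goes through cleanly and the hypothesis $[\lambda^\circ(G,\circ), \lambda^\star(G,\star)] = \Id$ is used only to swap the two families of automorphisms in the computation of $L$.
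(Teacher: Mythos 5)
Your proof is correct and takes essentially the same route as the paper's: both expand the compatibility identity $a \star (b \circ c) = (a\star b)\circ a^{\circ(-1)}\circ(a\star c)$ using $x \circ y = x\cdot\lambda^{\circ}_x(y)$ and $x\star y = x\cdot\lambda^{\star}_x(y)$, reduce it via the anti-homomorphism identities and the commutation hypothesis to $\lambda^{\circ}_{\lambda^{\star}_a(b)} = \lambda^{\circ}_b$ for all $a,b$, which is precisely condition (i), and then obtain the symmetric statement by interchanging the roles of $\circ$ and $\star$. The only cosmetic difference is that you apply the commutation hypothesis while simplifying the left-hand side, whereas the paper invokes it after equating the two sides.
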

\begin{proof}
Both $(G, \circ)$ and $(G, \star)$ are groups. For $a \in G$, let $\bar a$ denote the inverse of $a$ in $(G, \circ)$ and $\hat a$ that of $a$ in $(G, \star)$.  Obviously $(G, \circ, \star)$ is a skew brace if and only if
$$a \star (b \circ c) = (a \star b) \circ {\bar a} \circ (a \star c) \;\; \mbox{for all $a, b, c \in G$}.$$
Compute both sides separately in the skew braces $(G, \cdot, \circ)$ and $(G, \cdot, \star)$ simultaneously.  First
\begin{eqnarray*}
a \star (b \circ c) &=& a \star (b \cdot \lambda^{\circ}_b(c))\\
&=& a \cdot \lambda^{\star}_a(b \cdot \lambda^{\circ}_b(c))\\
&=& a  \cdot \lambda^{\star}_a(b) \cdot  \lambda^{\star}_a(\lambda^{\circ}_b(c)).
\end{eqnarray*}
Next
\begin{eqnarray*}
(a \star b) \circ {\bar a} \circ (a \star c) &=& (a \cdot \lambda^{\star}_a(b)) \circ {\bar a} \circ (a \cdot \lambda^{\circ}_a(c))\\
&=& (a \cdot \lambda^{\star}_a(b)) \circ {\bar a}^{-1} \cdot (\bar a \circ \lambda^{\circ}_a(c))\\
&=& (a \cdot \lambda^{\star}_a(b)) \circ \lambda^{\circ}_{\bar a}(\lambda^{\star}_a(c))\\
&=&  a \cdot \lambda^{\star}_a(b) \cdot \lambda^{\circ}_{a \cdot \lambda^{\star}_a(b)}(\lambda^{\circ}_{\bar a}(\lambda^{\star}_a(c)))\\
&=& a \cdot \lambda^{\star}_a(b) \cdot \lambda^{\circ}_{\lambda^{\star}_a(b)}(\lambda^{\star}_a(c)).
\end{eqnarray*}
Now  $(G, \circ, \star)$ is a skew brace if and only if $\lambda^{\star}_a(\lambda^{\circ}_b(c)) = \lambda^{\circ}_{\lambda^{\star}_a(b)}(\lambda^{\star}_a(c))$ if and only if $\lambda^{\circ}_{b^{-1}}(\lambda^{\star}_{a^{-1}} (\lambda^{\circ}_{\lambda^{\star}_a(b)}(\lambda^{\star}_a(c))))  = c$, for all $a, b, c \in G$. By the given hypothesis this is equivalent to $\lambda^{\circ}_{ \lambda^{\star}_a(b) b^{-1}} = \Id$ for all $a, b \in G$, which is further equivalent to $[(G, \cdot), \lambda^{\star}(G, \star)] \subseteq \Ker \, \lambda^{\circ}$. This proves  the first assertion. The second assertion holds by interchanging $\lambda^{\star}_a$ and $\lambda^{\circ}_a$ for $a \in G$ in the above procedure.
\end{proof}

Let us see an example.
\begin{example}
Let $G = (G, \cdot)$ be any group of nilpotency class $3$.  Let $(G, \cdot, \circ)$ be the $\lambda$-anti-homomorphic skew brace of Example \ref{example1}. Obviously $\Ker \, \lambda^{\circ} = \Z(G)$. Now we construct another such skew brace. Let $\beta_x$ be an inner automorphism induced by a fixed element  $x \in G$, i.e., $\beta_x(a) = x^{-1}ax$ for all $a \in G$. Define $\lambda^{\star} : G \to \Aut \,G$, $a \mapsto \lambda^{\star}_a$, where
$$\lambda^{\star}_a(b) = (a^{-1}\beta_x(a))^{-1}ba^{-1}\beta_x(a) \;\;\mbox{for all $b \in G$}.$$
An easy computation shows that $\lambda^{\star}$ is an anti-homomorphism.
Notice that for all $a, b \in G$, we have
$$a \lambda^{\star}_a(b) a^{-1} b^{-1} = [a, b] [b, a^{-1}\beta_x(a)] \in \gamma_2(G).$$
Also notice that $u^{-1}\beta_x(u) \in \gamma_3(G) \le \Z(G)$ for all $u \in \gamma_2(G)$. It now follows  that $a \lambda^{\star}_a(b) a^{-1} b^{-1} \in \Ker \, \lambda^{\star}$. Hence $(G, \cdot, \star)$ is a $\lambda$-anti-homomorphic skew brace such that $\gamma_2(G) \le \Ker \, \lambda^{\star}$. It is now not difficult to see that the hypotheses of  Lemma \ref{anti-link} are satisfied, and hence $(G, \circ, \star)$ is a symmetric skew brace.
\end{example}

\medskip

We now restrict our attention to a special case of $\lambda$-anti-homomorphic skew braces $(G, \cdot, \circ)$ in which $\lambda(G, \cdot)$ is an abelian subgroup of $\Aut \, (G, \cdot)$. In this case $(G, \cdot, \circ)$ becomes a $\lambda$-homomorphic skew brace, a concept introduced and studied in \cite{BNY22}.
We take up this study in the next section.

\bigskip


\section{Brace systems through $\lambda$-homomorphic skew braces} \label{lambda}

In this section,  we first recall the definition of a  $\lambda$-homomorphic skew brace and its connection with regular subgroups of the holomorph of its additive group. We'll then carry out detailed investigation of such skew braces with $\lambda(G)$ abelian subgroup of $\Aut \, G$.

\begin{definition}
A skew brace $(G, \cdot, \circ)$ is said to be a $\lambda$-{\it homomorphic skew brace} if the map $\lambda : (G, \cdot) \to \Aut \,(G, \cdot)$, defined  by  $\lambda_a(b) = a^{-1} (a \circ b)$, $a, b \in (G,\cdot)$, is a homomorphism.
\end{definition}

Let $G$ be a group. The holomorph of $G$ is the group $\Hol G := \Aut  G \ltimes G$, in which the product is given by
$$
(f,a)(g,b)=(fg,af(b))
$$
for all $a,b \in G$ and $f,g \in \Aut G$.
Any subgroup $H$ of $\Hol  G$ acts on $G$ as follows
$$
(f,a) \cdot b = af(b),\quad a,b \in G,\,\, f \in \Aut G.
$$
A subgroup $H$ of $\Hol  G$ is said to be {\it regular} if the action of $H$ on $G$ is free and transitive.  It is  equivalent to the fact that for each $a \in G$ there exists a unique $(f,x) \in H$ such that $xf(1)=a$.
Let $\pi_2 : \Hol G \to G$ denote the projection map from $\Hol G$ onto the second component of $G$. Notice that $\pi_2|_H$, the restriction of $\pi_2$ to $H$, is a bijection.

The following theorem  provides a connection between skew braces and regular subgroups.

\begin{thm}\cite[Theorem 4.2]{GV2017} \label{gv2017}
Let $(G, \cdot, \circ)$ be a skew  brace. Then $\left\{ \, (\lambda_a, a)  \mid  a \in G  \,  \right\}$
is a regular subgroup of $\Hol G$, where $\lambda_a(b) = a^{-1}(a \circ b)$ for all $b \in G$.

Conversely, if $(G, \cdot)$ is a group and $H$ is a regular subgroup of $\Hol  \, (G, \cdot)$, then $(G, \cdot, \circ)$ is a skew  brace such that $(G, \circ) \cong H$, where $a \circ b=a f(b)$
with $(\pi_2|_H)^{-1}(a)=(f, a)\in H$.
\end{thm}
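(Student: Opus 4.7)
The plan is to verify both directions by direct computation, leveraging the fact, already cited from \cite{GV2017} earlier in the excerpt, that $\lambda:(G,\circ)\to\Aut(G,\cdot)$ is a group homomorphism.

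For the forward direction, given a skew brace $(G,\cdot,\circ)$, I would first show that $H:=\{(\lambda_a, a)\mid a\in G\}$ is a subgroup of $\Hol G$. The semidirect-product multiplication yields
$$(\lambda_a, a)(\lambda_b, b) = (\lambda_a\lambda_b,\, a\lambda_a(b)) = (\lambda_{a\circ b},\, a\circ b),$$
using $a\lambda_a(b) = a\cdot a^{-1}(a\circ b) = a\circ b$ in the second coordinate and the homomorphism property $\lambda_a\lambda_b = \lambda_{a\circ b}$ in the first. The identity is $(\lambda_1,1)=(\Id,1)$ since $\lambda_1(b)=1^{-1}(1\circ b)=b$, and the inverse of $(\lambda_a,a)$ is $(\lambda_{\bar a},\bar a)\in H$. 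The same computation shows that $a\mapsto(\lambda_a,a)$ is an isomorphism $(G,\circ)\to H$. Regularity is then immediate: every automorphism fixes $1$, so the condition $xf(1)=a$ defining regularity reduces to $x=a$, which has the unique solution $(f,x)=(\lambda_a,a)\in H$.

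For the converse, I would use the characterization of regularity given in the excerpt, namely that $\pi_2|_H:H\to G$ is a bijection. Writing $(\pi_2|_H)^{-1}(a)=(f_a,a)$ and defining $a\circ b:=af_a(b)$, the product rule in $\Hol G$ gives
$$(f_a,a)(f_b,b)=(f_af_b,\,af_a(b)),$$
whose second coordinate is $a\circ b$ and whose first is therefore $f_{a\circ b}$. This single identity simultaneously establishes that $\pi_2|_H$ transports the group law on $H$ to the operation $\circ$ on $G$ (so $(G,\circ)$ is a group isomorphic to $H$), that $a\mapsto f_a$ is a group homomorphism from $(G,\circ)$ to $\Aut(G,\cdot)$, and that $f_a$ coincides with the map $b\mapsto a^{-1}(a\circ b)=\lambda_a$ prescribed by the skew-brace formalism. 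The brace identity then follows from $\lambda_a\in\Aut(G,\cdot)$ by the two-line calculation
$$a\circ(bc)=a\lambda_a(b)\lambda_a(c)=a\lambda_a(b)\cdot a^{-1}\cdot a\lambda_a(c)=(a\circ b)\,a^{-1}(a\circ c).$$

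There is no serious obstacle in this argument; the entire proof is a bookkeeping exercise once one makes the translation between the semidirect-product multiplication on $H$ and the brace operation $\circ$. The only non-trivial conceptual point, in the converse direction, is recognising that regularity of $H$ forces $\pi_2|_H$ to be a bijection (noted immediately before the theorem in the excerpt), which is what makes the definition $a\circ b:=af_a(b)$ unambiguous.
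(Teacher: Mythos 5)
Your argument is correct, and it is essentially the standard proof of this result; note that the paper itself gives no proof here, since the theorem is quoted verbatim from \cite[Theorem 4.2]{GV2017}, so there is nothing in the text to deviate from. Both directions of your computation (the identification $(\lambda_a,a)(\lambda_b,b)=(\lambda_{a\circ b},a\circ b)$ using the homomorphism $\lambda:(G,\circ)\to\Aut(G,\cdot)$, the reduction of regularity to $xf(1)=x=a$, and in the converse the transport of the group law of $H$ through the bijection $\pi_2|_H$ followed by the two-line verification of the brace identity from $f_a\in\Aut(G,\cdot)$) match the original Guarnieri--Vendramin argument; the only glossed point is that $1\circ b=b$, i.e.\ that the identities of $(G,\cdot)$ and $(G,\circ)$ coincide, which is the standard consequence of the brace identity with $b=c=1$ and is worth one explicit line.
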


A sufficient condition, in view of Theorem \ref{gv2017}, on  a  group  $G = (G, \cdot)$ so that one can get a $\lambda$-homomorphic  skew brace $(G, \cdot, \circ)$ is given in the following result (\cite[Theorem 2.3]{BNY22}):

\begin{thm} \label{t1}
Let  $G$ be a group, $\lambda : G \to \mathrm{Aut}\, G$, $a \mapsto \lambda_a$,  be a homomorphism of $G$ into the group of its automorphisms. The set
$$ H_{\lambda} := \left\{ \, (\lambda_a, a) \, | \, a \in G  \, \right\}$$
is a subgroup of  $\mathrm{Hol}\, G= \mathrm{Aut}\, G \ltimes G$
if and only if
\begin{equation} \label{inc}
[G, \lambda(G)] := \{ \, b^{-1} \lambda_a (b) \mid a, b \in G\,  \} \subseteq \mathrm{Ker}\,\lambda.
\end{equation}
Moreover, if  $H_{\lambda}$ is a subgroup, then it is  regular, and therefore by Theorem \ref{gv2017} we get a skew brace $(G, \cdot, \circ)$, where `$\circ$' is defined by $a \circ b = a \lambda_a(b)$.
\end{thm}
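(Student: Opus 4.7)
The plan is a direct calculation inside $\Hol G$, using the defining product $(f,a)(g,b) = (fg, af(b))$ together with the assumed homomorphism property $\lambda_{xy}=\lambda_x\lambda_y$. First I would compute
\[
(\lambda_a, a)(\lambda_b, b) \;=\; (\lambda_a \lambda_b,\; a\lambda_a(b)) \;=\; (\lambda_{ab},\; a\lambda_a(b)).
\]
This product lies in $H_{\lambda}$ exactly when its first coordinate equals $\lambda$ of its second, i.e.\ $\lambda_{a\lambda_a(b)} = \lambda_{ab}$. Expanding the left side by the homomorphism property and cancelling $\lambda_a$ on the left, the requirement collapses to $\lambda_{\lambda_a(b)} = \lambda_b$, and then (multiplying by $\lambda_{b^{-1}}$ on the left) to $\lambda_{b^{-1}\lambda_a(b)} = \Id$, i.e.\ $b^{-1}\lambda_a(b)\in\Ker\lambda$. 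Every step is reversible, so multiplicative closure of $H_{\lambda}$ is exactly the inclusion $[G,\lambda(G)]\subseteq\Ker\lambda$.

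For closure under inverses I would use the standard semidirect-product formula $(f,a)^{-1}=(f^{-1},f^{-1}(a^{-1}))$ to get $(\lambda_a,a)^{-1}=(\lambda_{a^{-1}},\lambda_{a^{-1}}(a^{-1}))$, and again reduce membership in $H_{\lambda}$ to a kernel condition, this time $\lambda_{a^{-1}}(a^{-1})\cdot a\in\Ker\lambda$. Specialising the hypothesis to $b=a^{-1}$ delivers $a\cdot\lambda_{a^{-1}}(a^{-1})\in\Ker\lambda$, and since $\Ker\lambda$ is normal in $G$ its $a$-conjugate $\lambda_{a^{-1}}(a^{-1})\cdot a$ also lies in $\Ker\lambda$. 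Combined with $(\lambda_1,1)=(\Id,1)\in H_{\lambda}$, this shows that the inclusion alone suffices for $H_{\lambda}$ to be a subgroup.

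Regularity is then immediate: the action $(\lambda_c,c)\cdot b = c\lambda_c(b)$ sends $1\mapsto c$, giving transitivity, while the stabiliser of $1$ is obviously $\{(\Id,1)\}$, giving freeness. The skew-brace assertion is then a direct invocation of Theorem \ref{gv2017}: unpacking $(\pi_2|_{H_{\lambda}})^{-1}(a)=(\lambda_a,a)$ gives $a\circ b = a\lambda_a(b)$.

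The only step requiring genuine care is the inverse-closure argument: the hypothesis delivers $a\lambda_{a^{-1}}(a^{-1})\in\Ker\lambda$, whereas the semidirect-product inverse asks for $\lambda_{a^{-1}}(a^{-1})\,a\in\Ker\lambda$, and the two differ by conjugation by $a$, which is where normality of $\Ker\lambda$ must be invoked. Everything else is routine bookkeeping with the holomorph product and the identity $\lambda_{xy}=\lambda_x\lambda_y$.
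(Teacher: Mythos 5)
Your proof is correct: the closure computation $(\lambda_a,a)(\lambda_b,b)=(\lambda_{ab},a\lambda_a(b))$ reducing to $\lambda_{b^{-1}\lambda_a(b)}=\Id$, the inverse check via $(\lambda_a,a)^{-1}=(\lambda_{a^{-1}},\lambda_{a^{-1}}(a^{-1}))$ together with normality of $\Ker\lambda$, and the regularity/skew-brace conclusions all go through. The paper itself gives no proof of this statement (it is quoted from \cite[Theorem 2.3]{BNY22}), and your argument is precisely the standard direct verification in the holomorph that this citation rests on, so there is nothing to correct.
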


We remark that this construction can also be carried out directly on the lines of the construction of $\lambda$-anti-homomorphic skew braces. More precisely, without getting into the regular subgroup business,  we can directly prove

\begin{thm}
Let $G = (G, \cdot)$ be a group and $\lambda : G \to \Aut \, G$ be a homomorphism such that $[G, \lambda(G)]  \subseteq \Ker \, \lambda$. Then $(G, \cdot, \circ)$ is a  $\lambda$-homomorphic skew brace, where $a \circ b = a \lambda_a(b)$ for all $a, b \in G$.
\end{thm}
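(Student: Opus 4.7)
The plan is to mirror the strategy used for Theorem~\ref{t2}, replacing the anti-homomorphism verification by the homomorphism verification, and to carry out three checks in order: that $(G,\circ)$ is a group, that the skew brace compatibility holds, and that the induced $\lambda$-map of the skew brace coincides with the given $\lambda$.

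First I would show that $(G,\circ)$ is a group. The identity is $1$, since $\lambda_1 = \Id$ gives $1 \circ a = a$ and $a \circ 1 = a\lambda_a(1) = a$. The inverse of $a$ should be $\bar a := \lambda_{a^{-1}}(a^{-1})$, as then $a \circ \bar a = a\lambda_a(\lambda_{a^{-1}}(a^{-1})) = a\lambda_1(a^{-1}) = 1$, using that $\lambda$ is a homomorphism. The main obstacle is associativity. Expanding,
\[
(a\circ b)\circ c \;=\; a\lambda_a(b)\,\lambda_{a\lambda_a(b)}(c), \qquad a\circ(b\circ c)\;=\; a\,\lambda_a(b)\,\lambda_a\lambda_b(c).
\]
Equality thus reduces to $\lambda_{a\lambda_a(b)} = \lambda_a\lambda_b$. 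Since $\lambda$ is a homomorphism, $\lambda_{a\lambda_a(b)} = \lambda_a\,\lambda_{\lambda_a(b)}$, so it suffices to prove $\lambda_{\lambda_a(b)} = \lambda_b$. Writing $\lambda_a(b) = b\cdot(b^{-1}\lambda_a(b))$, the hypothesis $[G,\lambda(G)]\subseteq\Ker\lambda$ forces $b^{-1}\lambda_a(b)\in\Ker\lambda$, hence $\lambda_{\lambda_a(b)} = \lambda_b\,\lambda_{b^{-1}\lambda_a(b)} = \lambda_b$, as desired. This is the one place where the hypothesis \eqref{inc} is genuinely used.

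Next I would verify the skew left brace identity
\[
a\circ(bc) \;=\; (a\circ b)\cdot a^{-1}\cdot(a\circ c).
\]
The left-hand side is $a\lambda_a(bc) = a\lambda_a(b)\lambda_a(c)$ because $\lambda_a\in\Aut(G,\cdot)$. The right-hand side is
\[
(a\lambda_a(b))\cdot a^{-1}\cdot(a\lambda_a(c)) \;=\; a\,\lambda_a(b)\,\lambda_a(c),
\]
so the two sides agree; this step is a routine cancellation and needs nothing beyond $\lambda_a$ being a $\cdot$-automorphism.

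Finally I would identify the induced $\lambda'$ of the constructed skew brace with the given $\lambda$. By definition $\lambda'_a(b) = a^{-1}(a\circ b) = a^{-1}\cdot a\lambda_a(b) = \lambda_a(b)$, so $\lambda' = \lambda$; since $\lambda$ is a homomorphism from $(G,\cdot)$ to $\Aut(G,\cdot)$ by hypothesis, the resulting skew brace is $\lambda$-homomorphic. The only non-formal step in the whole argument is the associativity verification, which is precisely where the defect-in-kernel condition $[G,\lambda(G)]\subseteq \Ker\lambda$ enters, exactly parallel to the role played by $a\lambda_a(b)a^{-1}b^{-1}\in\Ker\lambda$ in Theorem~\ref{t2}.
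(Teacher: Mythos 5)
Your verification is correct: the only non-routine point is associativity of $\circ$, and your reduction to $\lambda_{\lambda_a(b)}=\lambda_b$ via $b^{-1}\lambda_a(b)\in[G,\lambda(G)]\subseteq\Ker\lambda$ is exactly where the hypothesis is needed; the inverse $\lambda_{a^{-1}}(a^{-1})$, the compatibility identity, and the identification $\lambda'=\lambda$ all check out. The paper itself does not write this argument down: it formally obtains the statement from Theorem \ref{t1}, i.e.\ by showing that $H_\lambda=\{(\lambda_a,a)\mid a\in G\}$ is a regular subgroup of $\Hol G$ (using the criterion $[G,\lambda(G)]\subseteq\Ker\lambda$) and then invoking Theorem \ref{gv2017} to transport the group structure of $H_\lambda$ back to $G$; the present theorem is stated only with the remark that the construction ``can also be carried out directly'' in parallel with Theorem \ref{t2}. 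Your proposal is precisely that direct route, so it is the intended elementary argument rather than a gap-filling novelty. The trade-off is mild: the holomorph route gives for free the isomorphism $(G,\circ)\cong H_\lambda\le\Hol G$, which the paper exploits later (e.g.\ in the free-group computations of Section \ref{construction}), whereas your direct check is self-contained, avoids the regular-subgroup formalism, and makes visible exactly where the condition $[G,\lambda(G)]\subseteq\Ker\lambda$ enters (associativity, through $\lambda_{\lambda_a(b)}=\lambda_b$), which the regular-subgroup proof hides inside the subgroup criterion.
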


As a consequence we get

\begin{cor}\label{cor-homo}
If $\lambda(G)$ is an abelian subgroup of $\Aut \, G$, then the skew brace $(G, \cdot, \circ)$ constructed in the preceding theorem is symmetric. More precisely, any $\lambda$-homomorphic skew brace  $(G, \cdot, \circ)$ with $\lambda(G)$ abelian is symmetric.
\end{cor}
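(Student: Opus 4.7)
The plan is to deduce the corollary directly from the symmetry criterion already recorded in Proposition \ref{prop1}, namely that $(G,\cdot,\circ)$ is symmetric if and only if $\lambda_{a\circ b} = \lambda_{b\cdot a}$ for all $a,b \in G$. Thus the entire task reduces to verifying this single identity under the two hypotheses: (i) $\lambda\colon (G,\circ)\to \Aut(G,\cdot)$ is a group homomorphism (true for every skew brace by \cite{GV2017}), (ii) $\lambda\colon (G,\cdot)\to \Aut(G,\cdot)$ is also a group homomorphism (the $\lambda$-homomorphic hypothesis), and (iii) $\lambda(G)$ is abelian inside $\Aut(G,\cdot)$.

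First I would expand the left-hand side using (i): $\lambda_{a\circ b} = \lambda_a \lambda_b$. Next I would expand the right-hand side using (ii): $\lambda_{b\cdot a} = \lambda_b \lambda_a$. Finally, since both products live in $\lambda(G)$, the abelianness assumption (iii) gives $\lambda_a\lambda_b = \lambda_b\lambda_a$. Combining the three identities yields $\lambda_{a\circ b} = \lambda_{b\cdot a}$, and Proposition \ref{prop1} then concludes that $(G,\cdot,\circ)$ is symmetric. This covers the second (more general) assertion; the first assertion is the special case where the skew brace is the one built in the preceding theorem from a given homomorphism $\lambda$ with $[G,\lambda(G)]\subseteq \Ker\lambda$, so no separate argument is required.

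I do not expect any genuine obstacle here. The one point that deserves a brief sentence in the final write-up is the observation that the symbol $\lambda$ denotes the same underlying set map in (i) and (ii) — only the group structure on the source is changed — so that evaluating $\lambda$ at an element is unambiguous regardless of whether we view the element as lying in $(G,\cdot)$ or in $(G,\circ)$. Once this is noted, the chain of equalities $\lambda_{a\circ b}=\lambda_a\lambda_b=\lambda_b\lambda_a=\lambda_{b\cdot a}$ is immediate and the corollary follows.
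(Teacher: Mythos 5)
Your proof is correct and is essentially the paper's argument with one layer unwound: the paper observes that abelianness of $\lambda(G)$ makes the skew brace $\lambda$-anti-homomorphic and then cites Proposition \ref{anti-prop2}, whose proof rests on exactly your chain $\lambda_{a\circ b}=\lambda_a\lambda_b=\lambda_b\lambda_a=\lambda_{b\cdot a}$ together with Proposition \ref{prop1}. So there is no substantive difference, and your remark that $\lambda$ is the same underlying set map whichever group structure is placed on the source is the right point to make explicit.
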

\begin{proof}
Since $\lambda(G)$ is an abelian, $(G, \cdot, \circ)$ is a $\lambda$-anti-homomorphic skew brace. Assertion now follows from Proposition \ref{anti-prop2}.
\end{proof}

We take a pause to present  examples of a rooted brace system. 
\begin{example} With the preceding setup we can have the following constructions:
\begin{enumerate}
\item Let $(G, \cdot)$ be a group and $\mathcal{H}$  the set of  regular subgroups of $\Hol \, G$. Then for any regular subgroup $H \in \mathcal{H}$ there exists  the operation $\circ_H$ such that $(G, \cdot, \circ_H)$ is a skew  left brace. Hence we have a rooted brace system $\mathcal{B}_{\Gamma}(G)$ with the graph $\Gamma = (V, E)$, where $V = \{v_0\} \cup \{\circ_H\}_{H \in \mathcal{H}}$  and $E = \{v_0\} \times \{\circ_H \}_{H \in \mathcal{H}}$.
\item By applying the construction as in (1) to the group $(G, \circ_H)$ for all $H \in \mathcal{H}$ and continuing this process,
we can construct another rooted  brace system.
\end{enumerate}
\end{example}

Coming back, notice that the skew braces constructed through Theorem \ref{t1} are all $\lambda$-homomorphic.  A variety of examples of such skew braces were constructed in \cite{BNY22}. Let $(G, \cdot, \circ)$ be a $\lambda$-homomorphic  skew brace.  Then $\lambda : (G, \cdot) \to  \Aut \, (G, \cdot)$,  $a \mapsto \lambda_a$, is also a homomorphism. Hence for all $a, b, c \in G$,
\begin{eqnarray*}
\lambda_{a \circ b}(c) &=&  \lambda_a(\lambda_b(c)) \nonumber\\
&=& \lambda_{a \cdot b}(c)\;\; \mbox{   (by $\lambda$-homomorphic property)}.
\end{eqnarray*}
Hence
\begin{equation}\label{eq1}
\lambda_{a \circ b} = \lambda_{a \cdot b} \;\; \mbox{ for all $a, b \in G$}.
\end{equation}
As a consequence, we immediately get
$$[(G, \cdot), \lambda(G, \cdot)]  \subseteq \mathrm{Ker}\,\lambda.$$

Now onwards we concentrate on the case when $\lambda (G, \cdot)$ is an abelian subgroup of $\Aut \, (G, \cdot)$. We remark that in this special situation the concepts of $\lambda$-homomorphic and $\lambda$-anti-homomorphic skew braces coincide.  The following lemma is the base for a recursive process of constructing skew braces.
\begin{lemma}\label{lem1}
Let  $(G, \cdot, \circ)$ be a $\lambda$-homomorphic  skew brace such that $\lambda (G, \cdot)$ is an abelian subgroup of $\Aut \, (G, \cdot)$.  Then the homomorphism $\lambda$ is also a homomorphism from $(G, \circ)$  to  $\Aut \,(G, \circ)$. Moreover  $\lambda(G, \circ)$ is an abelian subgroup of $\Aut \,(G, \circ)$ and  $[(G, \circ), \lambda(G, \circ)] \subseteq \mathrm{Ker}\,\lambda$.
\end{lemma}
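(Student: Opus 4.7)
The plan is to prove the three assertions in order, exploiting the fact that the map $\lambda$ has the same set-theoretic graph in both viewpoints and that the ambient hypothesis $\lambda(G, \cdot)$ abelian forces $\lambda_{\lambda_a(b)} = \lambda_b$ for all $a,b \in G$.

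First I would establish the key identity $\lambda_{\lambda_a(b)} = \lambda_b$. Since $[(G, \cdot), \lambda(G, \cdot)] \subseteq \Ker\,\lambda$ by \eqref{inc} (this inclusion is a consequence of the skew brace axioms and was also derived directly from \eqref{eq1} above in the excerpt), we have $b^{-1}\lambda_a(b) \in \Ker\,\lambda$, and the $\lambda$-homomorphic property on $(G, \cdot)$ then yields $\lambda_{\lambda_a(b)} = \lambda_b$. This identity is the main workhorse.

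Next I would verify that each $\lambda_a$ is an automorphism of $(G, \circ)$, which amounts to the equality $\lambda_a(b \circ c) = \lambda_a(b) \circ \lambda_a(c)$ for all $a,b,c \in G$. Expanding the left side via $b \circ c = b \cdot \lambda_b(c)$ and the fact that $\lambda_a \in \Aut\,(G, \cdot)$ gives $\lambda_a(b) \cdot \lambda_a(\lambda_b(c))$. Expanding the right side via the definition of $\circ$ gives $\lambda_a(b) \cdot \lambda_{\lambda_a(b)}(\lambda_a(c))$, which by the key identity equals $\lambda_a(b) \cdot \lambda_b(\lambda_a(c))$. The two expressions coincide because $\lambda(G, \cdot)$ is abelian, i.e.\ $\lambda_a \lambda_b = \lambda_b \lambda_a$. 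Once $\lambda_a \in \Aut\,(G, \circ)$ is established, the map $\lambda : (G, \circ) \to \Aut\,(G, \circ)$ is a homomorphism for free, because its underlying set map coincides with the homomorphism $\lambda : (G, \circ) \to \Aut\,(G, \cdot)$ guaranteed by \cite{GV2017}; composition of automorphisms is the same operation in both automorphism groups.

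For the last two statements, $\lambda(G, \circ)$ equals $\lambda(G, \cdot)$ as a subset of the function space and inherits the same composition, so it is abelian. Finally, to show $[(G, \circ), \lambda(G, \circ)] \subseteq \Ker\,\lambda$, I would pick arbitrary $a, b \in G$ and compute $\lambda_{b^{\circ(-1)} \circ \lambda_a(b)}$ using the homomorphism property of $\lambda$ on $(G, \circ)$: it equals $\lambda_{b}^{-1} \circ \lambda_{\lambda_a(b)}$, which is $\Id$ by the key identity from the first paragraph. I do not anticipate any serious obstacle; the only subtle point is keeping track of which group structure is being used at each step, but the abelianness of $\lambda(G, \cdot)$ together with the identity $\lambda_{\lambda_a(b)} = \lambda_b$ makes everything collapse cleanly.
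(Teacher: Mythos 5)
Your proposal is correct and takes essentially the same route as the paper: the paper also reduces the first assertion to the fact that each $\lambda_a$ preserves $\circ$ (which it delegates to the $\lambda$-anti-homomorphic discussion after Theorem \ref{t2}, whereas you verify it in-line via the identity $\lambda_{\lambda_a(b)}=\lambda_b$ and the abelianness of $\lambda(G,\cdot)$), notes that the homomorphism and the abelian image come for free since the set map and composition are unchanged, and proves the inclusion $[(G,\circ),\lambda(G,\circ)]\subseteq \Ker\lambda$ by exactly your kernel computation $\lambda_{\bar b\circ\lambda_a(b)}=\lambda_b^{-1}\lambda_{\lambda_a(b)}=\Id$.
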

\begin{proof}
Since $\lambda (G, \cdot)$ is an abelian subgroup of $\Aut \, (G, \cdot)$, obviously $(G, \cdot, \circ)$ is also a $\lambda$-anti-homomorphic  skew brace. It now follows from the preceding section (discussion after Theorem \ref{t2}) that $\lambda(G)$ is a subgroup of $\Aut \,(G, \circ)$.
Thus the homomorphism $\lambda : (G, \circ) \to \Aut \, (G, \cdot)$ is also a homomorphism from $(G, \circ)$ to $\Aut \, (G, \circ)$ with the image $\lambda(G, \circ)$.

Obviously $ \lambda(G, \circ)$ is abelian.  Now using \eqref{eq1} and the given hypothesis,  we get
$$\lambda({\bar b} \circ \lambda_a(b)) =  \lambda_{ {\bar b} \circ \lambda_a(b)} = \lambda^{-1}_b  \lambda_{\lambda_a(b)} = \lambda_{b^{-1} \lambda_a(b)} = \Id$$
for all $a, b \in (G, \circ)$. Hence $[(G, \circ), \lambda(G, \circ)] \subseteq \mathrm{Ker}\,\lambda$,
which completes the proof.
\end{proof}

We are now going to construct a symmetric linear  brace system  starting with a $\lambda$-homomorphic skew brace  $(G, \cdot, \circ)$  such that $\lambda (G, \cdot)$ is an abelian subgroup of $\Aut \, (G, \cdot)$, and iteratively applying Lemma \ref{lem1}.
Let us denote $\cdot$ and $\circ$ by $\circ_0$ and $\circ_1$ respectively. Notice that $(G, \circ_0, \circ_1)$ is a symmetric skew brace (Proposition \ref{anti-prop2}). By Lemma \ref{lem1} we know that $\lambda : (G, \circ_1) \to \Aut \, (G, \circ_1)$ is a homomorphism, $\lambda(G, \circ_1)$ is abelian and $[(G, \circ_1), \lambda(G, \circ_1)] \subseteq \mathrm{Ker}\,\lambda$. Then by Theorem \ref{t1} we get a $\lambda$-homomorphic skew brace $(G, \circ_1, \circ_2)$ such that for $a, b \in G$,
$$a \circ_2 b := a \circ_1 \lambda_a(b).$$
Again using Proposition \ref{anti-prop2}, we contend that it is symmetric.
By  this iterative process, we can construct, for any integer $i \ge 1$,  a symmetric $\lambda$-homomorphic skew brace $(G, \circ_i, \circ_{i+1})$ such that for $a, b \in G$,
$$a \circ_{i+1} b := a \circ_i \lambda_a(b).$$

We now prove
\begin{lemma}\label{ker-lem}
In the above construction $\lambda$ is a homomorphism and  $\Ker \, \lambda$, as a subset of $G$, is the same at all levels. More precisely, $\Ker \, \lambda$ in $(G, \circ_i)$ is equal to $\Ker \, \lambda$ in $(G, \circ_0)$ for all $i$. Also $\lambda(G, \circ_i) = \lambda(G, \circ_0)$ for all $i$.
\end{lemma}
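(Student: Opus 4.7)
The plan is to prove the statement by induction on $i \geq 0$, establishing simultaneously the following two claims at each level: (a) $\lambda \colon (G, \circ_i) \to \Aut\,(G, \circ_i)$ is a homomorphism satisfying $[(G, \circ_i), \lambda(G, \circ_i)] \subseteq \Ker\,\lambda$, and (b) the set map $\lambda \colon G \to \Symm(G)$, $a \mapsto \lambda_a$, is literally the same function at level $i$ as at level $0$, i.e.\ $\lambda^{(i)}_a = \lambda^{(0)}_a$ as maps $G \to G$ for every $a \in G$. Once (b) is established, the lemma follows immediately, since $\Ker\,\lambda$ (as a subset of $G$) is by definition $\{a \in G : \lambda_a = \Id_G\}$ and $\lambda(G, \circ_i)$ is by definition $\{\lambda_a : a \in G\}$, and both depend only on the set-theoretic map $a \mapsto \lambda_a$.

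The base case $i = 0$ is the hypothesis of the construction. For the inductive step, assume (a) and (b) hold at level $i$. Then (a) supplies exactly the hypotheses of Lemma \ref{lem1} applied to the $\lambda$-homomorphic skew brace $(G, \circ_i, \circ_{i+1})$ (whose $\lambda$-image is abelian by (b) together with the original assumption). Hence Lemma \ref{lem1} immediately produces (a) at level $i+1$. The crux of the induction is then to check (b) at level $i+1$, which boils down to the one-line computation
\begin{equation*}
\lambda^{(i+1)}_a(b) \; = \; a^{\circ_i(-1)} \circ_i (a \circ_{i+1} b) \; = \; a^{\circ_i(-1)} \circ_i \bigl(a \circ_i \lambda^{(i)}_a(b)\bigr) \; = \; \lambda^{(i)}_a(b),
\end{equation*}
using the defining formula $a \circ_{i+1} b = a \circ_i \lambda_a(b)$ of the construction. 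By the inductive hypothesis (b) at level $i$, the right-hand side equals $\lambda^{(0)}_a(b)$, establishing (b) at level $i+1$.

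Having completed the induction, the conclusion is purely formal: since $\lambda^{(i)} = \lambda^{(0)}$ as set maps $G \to \Symm(G)$ for every $i$, the preimage of $\{\Id_G\}$ and the total image are independent of $i$, giving $\Ker\,\lambda$ and $\lambda(G, \circ_i)$ equal to their counterparts at level $0$. There is no genuine obstacle in this argument; the only subtlety to watch for is that ``$\Ker\,\lambda$ is the same at all levels'' must be read as an equality of subsets of $G$ (the identity element of $(G, \circ_i)$ changes with $i$, but the identity automorphism of the set $G$ does not), and the computation above makes precisely this identification transparent.
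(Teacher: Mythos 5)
Your proof is correct and takes essentially the same route as the paper: both arguments rest on the observation that the set map $a \mapsto \lambda_a$ is literally unchanged in passing from level $i$ to level $i+1$ (your one-line computation $a^{\circ_i(-1)} \circ_i (a \circ_i \lambda_a(b)) = \lambda_a(b)$ is exactly what the paper's commutative diagram encodes), after which the equality of kernels and images as subsets of $G$ is formal, and the homomorphism property at each level comes from iterating Lemma \ref{lem1}. One small slip in your closing aside: the identity element of $(G, \circ_i)$ does not in fact change with $i$ (it is the identity $e$ of $(G,\circ_0)$ at every level, since $\lambda_a(e)=e$), though this has no bearing on the validity of your argument.
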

\begin{proof}
 It is pertinent, by the very construction, that the following diagram is commutative.
 \begin{center}
\begin{tikzcd}
(G, \circ_0)    \arrow[r, "\lambda"]  & \Aut \,(G, \circ_0) \arrow[d, "\Id"]\\
(G, \circ_1)    \arrow[ru, "\lambda^{\circ_1}"] \arrow[r, "\lambda"]  & \Aut \,(G, \circ_1) \arrow[d, "\Id"]\\
(G, \circ_2)    \arrow[ru, "\lambda^{\circ_2}"] \arrow[r, "\lambda"]  & \Aut \,(G, \circ_2) \arrow[d, "\Id"]\\
(G, \circ_3)    \arrow[ru, "\lambda^{\circ_3}"] \arrow[r, "\lambda"]  & \Aut \,(G, \circ_3)\\
\vdots & \vdots,
\end{tikzcd}
\end{center}
where $\Id$ is the identity map on the level of permutations on $G$ and $\lambda$ is a homomorphism at each level.
We start with the fact that $\lambda$ with respect to $\circ_0$ and $\lambda^{\circ_1}$ are equal at element level. Hence
 $\Ker \, \lambda^{\circ_1} = \Ker \, \lambda$. Now by the commutativity of the uppermost triangle, we have that the homomorphisms $\lambda^{\circ_1}$ and $\lambda : (G, \circ_{1}) \to \Aut \, (G, \circ_{1})$ are equal.  Thus the kernels of $\lambda : (G, \circ_{0}) \to \Aut \, (G, \circ_{0})$ and $\lambda : (G, \circ_{1}) \to \Aut \, (G, \circ_{1})$ are the same as subsets of $G$. It also follows that  $\lambda(G, \circ_1) = \lambda(G, \circ_0)$. A straightforward iterative argument now completes the proof.
\end{proof}

\begin{lemma}
If the exponent of $\lambda (G, \circ_0)$ is $n$, then $\circ_{n} = \circ_0$.
\end{lemma}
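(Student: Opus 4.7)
The plan is to show by induction on $i$ that the binary operation $\circ_i$ admits the closed form
\begin{equation*}
a \circ_i b = a \circ_0 \lambda_a^{\,i}(b) \quad \text{for all } a, b \in G,
\end{equation*}
and then specialize $i = n$. The base case $i = 1$ is just the definition of $\circ_1$ in terms of the original skew brace $(G, \circ_0, \circ_1)$, namely $a \circ_1 b = a \circ_0 \lambda_a(b)$.

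For the inductive step, I would invoke Lemma \ref{ker-lem}: the map $\lambda$ is the same function on $G$ at every level, and the $\lambda^{\circ_i}$ appearing in the definition $a \circ_{i+1} b = a \circ_i \lambda_a^{\circ_i}(b)$ agrees with the original $\lambda_a$. Hence the recursion can be unfolded as
\begin{equation*}
a \circ_{i+1} b = a \circ_i \lambda_a(b) = a \circ_0 \lambda_a^{\,i}(\lambda_a(b)) = a \circ_0 \lambda_a^{\,i+1}(b),
\end{equation*}
completing the induction. The point that will need the most care is justifying that the $\lambda$ appearing inside $\circ_i$ really is the \emph{original} $\lambda$ (not some level-$i$ reinterpretation), which is exactly what the commuting diagram in the proof of Lemma \ref{ker-lem} gives us; without that identification the recursion does not telescope.

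Once the closed form is in place, the conclusion is immediate. By hypothesis, the exponent of $\lambda(G, \circ_0)$ is $n$, so $\lambda_a^{\,n} = \Id$ for every $a \in G$. Substituting $i = n$ gives
\begin{equation*}
a \circ_n b = a \circ_0 \lambda_a^{\,n}(b) = a \circ_0 b
\end{equation*}
for all $a, b \in G$, which is precisely $\circ_n = \circ_0$. I do not expect any real obstacle beyond the bookkeeping in the inductive step; the whole argument rests on the identification of the $\lambda$'s across levels supplied by the preceding lemma.
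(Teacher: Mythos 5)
Your proposal is correct and follows essentially the same route as the paper: the paper's proof simply unfolds the recursion $a \circ_n b = a \circ_{n-1}\lambda_a(b) = \cdots = a \circ_0 \lambda_a^n(b) = a \circ_0 b$, which is exactly your closed form specialized at $i=n$. Your explicit appeal to Lemma \ref{ker-lem} to identify the $\lambda$'s across levels just makes precise what the paper's telescoping uses implicitly.
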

\begin{proof}
For $a, b \in G$, we have
\begin{eqnarray*}
a \circ_{n} b &=& a \circ_{n-1} \lambda_a(b)\\
 &=& a \circ_{n-2} \lambda^2_a(b)\\
& & \vdots\\
&=& a \circ_0 \lambda^n_a(b)\\
&=& a \circ_0 b.
\end{eqnarray*}
Since this is true for all $a, b \in G$, the assertion holds.
\end{proof}

\begin{lemma}
Let $0 \le i < j$ be integers. Then $(G, \circ_i, \circ_j)$ is a $\lambda$-homomorphic skew brace with $\lambda(G, \circ_j)$ is abelian. Hence $(G, \circ_i, \circ_j)$ is a symmetric skew brace.
\end{lemma}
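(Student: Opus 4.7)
The plan is to realize $\circ_j$ directly from $\circ_i$ in one step, by iterating the $\lambda$-automorphism $j-i$ times, and then to invoke Theorem \ref{t1} at level $i$ followed by Corollary \ref{cor-homo}.

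First I would establish, by induction on $k = j-i \ge 1$, the closed-form identity
\begin{equation*}
a \circ_j b \;=\; a \circ_i \lambda_a^{j-i}(b) \qquad \text{for all } a, b \in G,
\end{equation*}
where $\lambda_a^{j-i}$ denotes the $(j-i)$-fold composition of $\lambda_a$ with itself in $\Aut\,G$. The base case $k=1$ is the recursive definition of $\circ_{i+1}$, and the inductive step is immediate from $a \circ_{j+1} b = a \circ_j \lambda_a(b)$.

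Next I would define $\mu : G \to \Aut\, G$ by $\mu_a := \lambda_a^{j-i}$ and verify that the triple $(G, \circ_i)$ with $\mu$ satisfies the hypotheses of Theorem \ref{t1}. By Lemma \ref{ker-lem} the map $\lambda : (G, \circ_i) \to \Aut\,(G, \circ_i)$ is a homomorphism with $\lambda(G, \circ_i) = \lambda(G, \circ_0)$ abelian, so $\mu_a = \lambda_{a^{\circ_i(j-i)}}$ belongs to $\lambda(G, \circ_i)$, which gives $\mu(G) \subseteq \lambda(G, \circ_0)$ abelian. The homomorphism property for $\mu$ is then the one-line computation
\begin{equation*}
\mu_{a \circ_i b} = \lambda_{a \circ_i b}^{j-i} = (\lambda_a \lambda_b)^{j-i} = \lambda_a^{j-i}\lambda_b^{j-i} = \mu_a \mu_b,
\end{equation*}
valid because $\lambda(G, \circ_i)$ is abelian.

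The one point that needs a little care is the condition $[(G, \circ_i), \mu(G, \circ_i)] \subseteq \Ker\, \mu$. The key observation here is $\Ker\,\lambda \subseteq \Ker\,\mu$, since $\lambda_a = \Id$ forces $\mu_a = \lambda_a^{j-i} = \Id$. Combined with Lemma \ref{lem1} applied at level $i$, which gives $b^{\circ_i(-1)} \circ_i \lambda_c(b) \in \Ker\,\lambda$ for all $b, c$, the choice $c = a^{\circ_i(j-i)}$ yields
\begin{equation*}
b^{\circ_i(-1)} \circ_i \mu_a(b) \;=\; b^{\circ_i(-1)} \circ_i \lambda_{a^{\circ_i(j-i)}}(b) \;\in\; \Ker\,\lambda \;\subseteq\; \Ker\,\mu.
\end{equation*}
Theorem \ref{t1} (applied with additive group $(G, \circ_i)$ and homomorphism $\mu$) then produces a $\lambda$-homomorphic skew brace $(G, \circ_i, \star)$ with $a \star b = a \circ_i \mu_a(b)$, and the closed-form identity of the first paragraph identifies $\star$ with $\circ_j$. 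Finally, since the associated $\lambda$-image for this skew brace is $\mu(G) \subseteq \lambda(G, \circ_0)$, which is abelian, Corollary \ref{cor-homo} gives the symmetry of $(G, \circ_i, \circ_j)$.
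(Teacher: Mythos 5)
Your proposal is correct and follows essentially the same route as the paper: iterate the recursion to obtain $a \circ_j b = a \circ_i \lambda_a^{j-i}(b)$, check that the power map $a \mapsto \lambda_a^{j-i}$ satisfies the hypotheses of Theorem \ref{t1} over $(G,\circ_i)$, and then conclude symmetry from the abelian image. The only difference is at the kernel condition, where you observe $\lambda_a^{j-i} = \lambda_{a^{\circ_i(j-i)}} \in \lambda(G,\circ_i)$ so that the relevant commutators already lie in $\Ker\,\lambda \subseteq \Ker\,\mu$, while the paper reaches the same inclusion by a telescoping computation; your shortcut is valid.
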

\begin{proof}
For $a, b \in (G, \circ_j)$, we have
\begin{eqnarray*}
a \circ_{j} b &=& a \circ_{j-1} \lambda_a(b)\\
 &=& a \circ_{j-2} \lambda^2_a(b)\\
& & \vdots\\
&=& a \circ_i \lambda^{j-i}_a(b).
\end{eqnarray*}
As we have seen above $\lambda : (G, \circ_i) \to \Aut \, (G, \circ_i)$ is a homomorphism which is equal, at the element level, to the homomorphism $\lambda : (G, \circ_0) \to \Aut \, (G, \circ_0)$. Set $j-i = k$. Now consider the map $\lambda^k : (G, \circ_i) \to \Aut \,(G, \circ_i)$, $a \mapsto \lambda^k_a$, where  $\lambda^k_a = (\lambda_a)^k$. Since $\lambda(G, \circ_i)$ is abelian, it follows that $\lambda^k$ is a homomorphism. We claim that  $[(G, \circ_i), \lambda^k(G, \circ_i)] \le \Ker \, \lambda^k$.
We already know that $[(G, \circ_i), \lambda(G, \circ_i)] \le \Ker \, \lambda$. So, for $a, b \in G$, we have
\begin{eqnarray*}
\lambda_{b^{\circ_i(-1)} \circ_i \lambda^k_a(b)} & = & \lambda_{b^{\circ_i(-1)} \circ_i \lambda^{k-1}_a(b) \circ_i \big((\lambda^{k-1}_a(b))^{\circ_i(-1)} \circ_i \lambda_a(\lambda^{k-1}_a(b))\big)}\\
&=& \lambda_{b^{\circ_i(-1)} \circ_i \lambda^{k-1}_a(b)}\\
&& \vdots\\
&=& \lambda_{b^{\circ_i(-1)} \circ_i \lambda^{1}_a(b)} = \lambda_{b^{\circ_i(-1)} \circ_i \lambda_a(b)}\\
&=& \Id.
\end{eqnarray*}
Hence $\lambda^k_{b^{\circ_i(-1)} \circ_i \lambda^k_a(b)} = (\lambda_{b^{\circ_i(-1)} \circ_i \lambda^k_a(b)})^k = \Id$. The proof now follows by first applying  Theorem \ref{t1} and then Proposition \ref{anti-prop2}.
\end{proof}

\begin{remark}
Let  $(G, \cdot, \circ)$ be a $\lambda$-homomorphic  skew brace such that $\lambda (G, \cdot)$ is an abelian subgroup of $\Aut \, (G, \cdot)$.  Then $(G, \circ, \cdot)$ is  a  skew brace. Let $\xi : (G, \cdot) \to \Aut(G, \circ)$, $a \mapsto \xi_a$,  be the corresponding map, where $\xi_a(b) = {\bar a} \circ (a \cdot b)$ for all $b \in G$. Notice that $\xi_a = \lambda^{-1}_a$ for all $a \in G$.  Hence $\xi$ is a homomorphism and therefore, $(G, \circ, \cdot)$ is  a  $\xi$-homomorphic skew brace with $\xi(G, \circ)$ abelian in  $\Aut(G, \circ)$.  In the above notation $(G, \circ, \cdot) = (G, \circ_1, \circ_0)$. Define a binary operation  $\circ_{-1}$  on $G$ as follows:
$$a \circ_{-1} b =  a \circ_{0} \xi_a(b) \;\;\; ( = a \circ_{0} \lambda^{-1}_a(b))$$
for all $a, b \in G$.
Then it follows that $(G, \circ_0, \circ_{-1})$ is a symmetric skew brace.   For any integer $n \ge 1$, iteratively define 
$$a \circ_{-n} b =  a \circ_{-n-1} \xi_a(b)$$
for all $a, b \in G$. As above, we see that $(G, \circ_{-i}, \circ_{-j})$ is a symmetric skew brace for all $1 \le i < j$.

\end{remark}

We have proved

\begin{thm}\label{t3}
Let  $G$ be a group, $\lambda : G \to \Aut\, G$, $a \mapsto \lambda_a$,  be a homomorphism such that  $\lambda(G)$ is abelian and $[G, \lambda(G)]  \subseteq \Ker \,\lambda$.
Then there exists a symmetric linear brace system $\mathcal{B}_I(G) := (G, \{\circ_i\}_{i \in I})$ on $G$, $I = \{0, \ldots, n-1\}$, if the exponent of $\lambda(G)$ is $n$. Otherwise $\mathcal{B}_I(G) :=  (G, \{\circ_i\}_{i \in I})$, $I = \mathbb{Z}$, is a symmetric linear brace system on $G$ of infinite size.
\end{thm}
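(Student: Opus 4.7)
For the forward half of the construction, I would set $\circ_0 = \cdot$ and observe that the hypotheses are exactly those of Theorem \ref{t1}, so defining $a \circ_1 b := a \lambda_a(b)$ yields a $\lambda$-homomorphic skew brace $(G, \circ_0, \circ_1)$, which is symmetric by Corollary \ref{cor-homo}. Lemma \ref{lem1} now propagates the required properties to the next level: $\lambda$ remains a homomorphism $(G, \circ_1) \to \Aut(G, \circ_1)$, its image is abelian, and $[(G, \circ_1), \lambda(G, \circ_1)] \subseteq \Ker \lambda$. A straightforward induction therefore produces operations $\{\circ_i\}_{i \ge 0}$ with $a \circ_{i+1} b = a \circ_i \lambda_a(b)$, each $(G, \circ_i, \circ_{i+1})$ being a symmetric $\lambda$-homomorphic skew brace. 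The lemma just preceding the theorem statement then upgrades this to the stronger fact that $(G, \circ_i, \circ_j)$ is a symmetric skew brace for every $0 \le i < j$, which is precisely the edge data demanded by a linear brace system.

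If the exponent of $\lambda(G)$ is $n$, the earlier lemma gives $\circ_n = \circ_0$, and the inductive rule then forces $\circ_{n+m} = \circ_m$ for all $m \ge 0$; hence $I = \{0, 1, \ldots, n-1\}$ exhausts all distinct operations and $\mathcal{B}_I(G)$ is the desired finite symmetric linear brace system. When the exponent is infinite no such collision occurs and the $\circ_i$ are pairwise distinct. To reach $I = \mathbb{Z}$, I would invoke the Remark: the skew brace $(G, \circ_1, \circ_0)$ is $\xi$-homomorphic with $\xi = \lambda^{-1}$, and its image is still abelian with $[(G, \circ_1), \xi(G, \circ_1)] \subseteq \Ker \xi$, so mirroring the above construction produces $\circ_{-1}, \circ_{-2}, \ldots$ together with symmetric skew braces on all pairs within the negative half.

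The main obstacle I anticipate is verifying that the two halves fit together, i.e.\ that $(G, \circ_i, \circ_j)$ is a symmetric skew brace for every $i < j$ in $\mathbb{Z}$, including mixed-sign pairs. The key tool here is Lemma \ref{ker-lem}: at every level $\lambda$ agrees as a set map with the original $\lambda$, and both its image and its kernel are the same subsets of $G$. Consequently the identity $a \circ_j b = a \circ_i \lambda_a^{j-i}(b)$ persists for all integers $i < j$, and since $\lambda^{j-i}$ is still a homomorphism into an abelian subgroup with commutator contained in its kernel, the argument of the forward-case lemma applies uniformly, delivering the required symmetric skew brace structure on every such pair and hence the full linear brace system $\mathcal{B}_{\mathbb{Z}}(G)$.
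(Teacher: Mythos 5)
Your proposal is correct and follows essentially the same route as the paper: Theorem \ref{t1} plus Corollary \ref{cor-homo} to start, Lemma \ref{lem1} to iterate, the lemma on $(G,\circ_i,\circ_j)$ for $0\le i<j$ together with Lemma \ref{ker-lem} and the exponent lemma for the finite case, and the Remark's $\xi=\lambda^{-1}$ construction for negative indices. Your only addition is to spell out the mixed-sign pairs via the uniform identity $a\circ_j b=a\circ_i\lambda_a^{j-i}(b)$, a detail the paper's Remark leaves implicit ("as above") but which is handled by exactly the same argument.
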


Since a $\lambda$-homomorphic skew brace $(G, \cdot, \circ)$ with abelian $\lambda(G, \cdot)$ is also a $\lambda$-anti-homomorphic skew brace, we can state Proposition \ref{anti-link} as follows.

\begin{proposition}\label{homo-link}
Let $(G, \cdot, \circ)$ and $(G, \cdot, \star)$  be  two $\lambda$-homomorphic skew braces with abelian $\lambda^{\circ}(G, \circ)$ and $\lambda^{\star}(G, \star)$ such that $[\lambda^{\circ}(G, \circ), \lambda^{\star}(G, \star)] = \Id$  in $\Aut \, (G, \cdot)$. Then $(G, \circ, \star)$ is a skew brace if and only if  (i) $[(G, \cdot), \lambda^{\star}(G, \star)] \subseteq \Ker \, \lambda^{\circ}$. Moreover, $(G, \circ, \star)$ is a symmetric skew brace if and only if in addition to (i)  we also have (ii) $[(G, \cdot), \lambda^{\circ}(G, \circ)] \subseteq \Ker \, \lambda^{\star}$.
\end{proposition}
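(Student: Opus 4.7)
The plan is to reduce the statement directly to Lemma \ref{anti-link}. The key observation, which is the same one used in the proof of Corollary \ref{cor-homo}, is that whenever a $\lambda$-homomorphic skew brace $(G, \cdot, \circ)$ has abelian image $\lambda^{\circ}(G) \subseteq \Aut(G, \cdot)$, the map $\lambda^{\circ} : (G, \cdot) \to \Aut(G, \cdot)$ is automatically an anti-homomorphism as well: for any $a, b \in G$,
$$\lambda^{\circ}_{a \cdot b} \;=\; \lambda^{\circ}_a \, \lambda^{\circ}_b \;=\; \lambda^{\circ}_b \, \lambda^{\circ}_a,$$
the first equality being the homomorphism property and the second following from the abelianness of $\lambda^{\circ}(G)$. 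The same argument applies verbatim with $\circ$ replaced by $\star$.

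Applying this observation separately to the two skew braces $(G, \cdot, \circ)$ and $(G, \cdot, \star)$, each of them becomes $\lambda$-anti-homomorphic in addition to being $\lambda$-homomorphic. At that point the hypotheses of Proposition \ref{homo-link} -- namely $[\lambda^{\circ}(G, \circ), \lambda^{\star}(G, \star)] = \Id$ in $\Aut(G, \cdot)$, together with conditions (i) and (ii) as needed -- are literally the hypotheses of Lemma \ref{anti-link}. Invoking that lemma then delivers both biconditionals at once: $(G, \circ, \star)$ is a skew brace iff (i) holds, and is a symmetric skew brace iff both (i) and (ii) hold.

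Because the proposition is arranged by the authors as a straight specialization of Lemma \ref{anti-link}, I do not anticipate any real obstacle. The only substantive step is the one-line verification above that abelian image upgrades the homomorphism property to an anti-homomorphism property; after that, the entire content of the proposition is simply a direct invocation of the preceding lemma, with no need to redo the side-by-side computation of $a \star (b \circ c)$ and $(a \star b) \circ \bar{a} \circ (a \star c)$ that powered its proof.
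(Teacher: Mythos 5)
Your proposal is correct and is exactly how the paper handles it: the authors state (just before the proposition) that a $\lambda$-homomorphic skew brace with abelian $\lambda(G,\cdot)$ is automatically $\lambda$-anti-homomorphic, and then present Proposition \ref{homo-link} as a direct restatement of Lemma \ref{anti-link} with no further computation. Your one-line verification that $\lambda_{a\cdot b}=\lambda_a\lambda_b=\lambda_b\lambda_a$ is precisely the upgrade the paper relies on, so nothing is missing.
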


Let us now see connection between different symmetric brace system constructed through Theorem \ref{t3}. When the group $G$ and the linearly ordered set $I$ are clear from the context, for simplicity, we only write $\mathcal{B}^{\circ}$ for the symmetric linear brace system $\mathcal{B}_I(G) :=  (G, \{\circ_i\}_{i \in I})$.

\begin{thm}\label{homo-union}
Let $\mathcal{B}^{\circ} :=  (G, \{\circ_i\}_{i \in I})$  and $\mathcal{B}^{\star} := (G, \{\star_i\}_{i \in I})$ be symmetric linear brace systems on $G$ built on  homomorphisms $\lambda^{\circ} : (G,\circ_{0}) \to \Aut \, (G,\circ_{0})$ and $\lambda^{\star} : (G,\star_{0}) \to \Aut \, (G,\star_{0})$ respectively through Theorem \ref{t3}, where $G = (G, \circ_0) = (G, \star_0)$. Let the hypotheses of Proposition \ref{homo-link} be satisfied for the skew braces $(G, \circ_0, \circ_1)$ and $(G, \star_0, \star_1)$. Then $\mathcal{B}^{\circ} \cup \mathcal{B}^{\star}$ is a symmetric brace system.
\end{thm}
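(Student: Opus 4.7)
The plan is to reduce the claim, via Proposition \ref{homo-link}, to a single telescoping calculation that promotes the assumed hypotheses at the base level $(i=1, j=1)$ to every pair of higher levels.

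The first step is to identify the $\lambda$-map of the skew brace $(G, \cdot, \circ_i)$ for each $i \ge 0$. A routine induction on $i$, using the defining recursion $a \circ_{k+1} b = a \circ_k \lambda^{\circ}_a(b)$ together with Lemma \ref{ker-lem} (which asserts that $\lambda^{\circ}$ is literally the same set-map at every level), gives
\[
a \circ_i b = a \cdot (\lambda^{\circ}_a)^i(b) \quad \text{for all } a, b \in G.
\]
Consequently $(G, \cdot, \circ_i)$ is a $\lambda$-homomorphic skew brace whose $\lambda$-map is $\mu^{(i)} : a \mapsto (\lambda^{\circ}_a)^i$; abelianness of $\lambda^{\circ}(G)$ forces $\mu^{(i)}$ to be a homomorphism with $\mu^{(i)}(G) \subseteq \lambda^{\circ}(G)$ abelian. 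The analogous statements hold for $(G, \cdot, \star_j)$ with $\lambda$-map $\nu^{(j)} : a \mapsto (\lambda^{\star}_a)^j$ and $\nu^{(j)}(G) \subseteq \lambda^{\star}(G)$.

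The main technical step is to verify the hypotheses of Proposition \ref{homo-link} for the pair $(G, \cdot, \circ_i)$ and $(G, \cdot, \star_j)$. The centralizer condition $[\mu^{(i)}(G), \nu^{(j)}(G)] = \Id$ is immediate from $[\lambda^{\circ}(G), \lambda^{\star}(G)] = \Id$ together with the containments of images. For condition (i), given $a, b \in G$, set $c_k := (\lambda^{\star}_a)^k(b)$ and telescope:
\[
b^{-1} \nu^{(j)}_a(b) \;=\; c_0^{-1} c_j \;=\; \prod_{k=0}^{j-1} c_k^{-1}\,\lambda^{\star}_a(c_k).
\]
Each factor has the form $c^{-1} \lambda^{\star}_a(c)$ with $c \in G$, hence lies in $\Ker \lambda^{\circ}$ by the base-level assumption $[(G, \cdot), \lambda^{\star}(G)] \subseteq \Ker \lambda^{\circ}$. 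Since $\Ker \lambda^{\circ} \subseteq \Ker \mu^{(i)}$ and $\Ker \lambda^{\circ}$ is a subgroup of $(G, \cdot)$, the whole product lies in $\Ker \mu^{(i)}$, proving (i). Condition (ii) follows from the symmetric telescoping with the roles of $\circ$ and $\star$ interchanged.

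Proposition \ref{homo-link} then yields that $(G, \circ_i, \star_j)$ is a symmetric skew brace for every $i, j \ge 0$, so in particular both $(G, \circ_i, \star_j)$ and $(G, \star_j, \circ_i)$ are skew braces. Since every pair within $\mathcal{B}^{\circ}$, and every pair within $\mathcal{B}^{\star}$, is already a symmetric skew brace by Theorem \ref{t3}, the union $\mathcal{B}^{\circ} \cup \mathcal{B}^{\star}$ carries both orientations of every edge, which is exactly the defining property of a symmetric brace system. The only non-routine ingredient is the telescoping identity above; everything else is bookkeeping.
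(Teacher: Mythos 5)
Your proof is correct and follows essentially the same route as the paper: identify the $\lambda$-map of $(G,\circ_0,\circ_i)$ as $a \mapsto (\lambda^{\circ}_a)^i$ (and similarly for $\star_j$), verify the hypotheses of Proposition \ref{homo-link} for this pair, and conclude that $(G,\circ_i,\star_j)$ is a symmetric skew brace for all $i,j$. The only difference is that you spell out, via the telescoping product, the verification of conditions (i) and (ii) that the paper dismisses as ``now clear,'' which is a welcome but not essentially different elaboration.
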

\begin{proof}
For given $i, j \ge 1$, consider the  groups $(G, \circ_i)$ and $(G, \star_j)$. We know that $(G, \circ_0, \circ_i)$ and $(G, \star_0, \star_j)$ are symmetric skew braces. Let $\bar \lambda_i$ and $\tilde \lambda_j$ denote the homomorphisms  $(G, \circ_i) \to \Aut \,(G, \circ_0)$ and $(G, \star_j) \to \Aut \,(G, \star_0)$ respectively. We know that $\bar \lambda_i (a) = (\lambda^{\circ}_a)^i$  and $\tilde \lambda_j (a) = (\lambda^{\star}_a)^j$ for all $a \in G$. It is now clear, using the given hypotheses,  that $\bar \lambda_i : (G, \circ_0) \to \Aut \,(G, \circ_0)$,  $\tilde \lambda_j : (G, \star_0) \to \Aut \,(G, \star_0)$ are homomorphisms and the hypotheses of  Proposition \ref{homo-link} are satisfied  for the braces $(G, \circ_0, \circ_i)$ and $(G, \star_0, \star_j)$. Hence $(G, \circ_i, \star_j)$ is a symmetric skew brace. Since $i, j \ge 1$ are arbitrary, it follows that  $\mathcal{B}^{\circ} \cup \mathcal{B}^{\star}$ is a symmetric brace system.
\end{proof}

We now present a generalization of Proposition \ref{homo-link}. Let $I_1$ and $I_2$ be two linearly ordered set and $I^*_k = I_k \cup \{0\}$, $k = 1, 2$.  Let $\mathcal{B}_{I^*_1}(G) = (G, \{\circ_i\}_{i \in I^*_1})$ and $\mathcal{B}_{I^*_2}(G) =  (G, \{\star_i\}_{i \in I^*_2})$ be two linear brace systems on $G$. Let  $(G, \cdot, \circ_i)$ and $(G, \cdot, \star_j)$ for some $i \in I_1$, $j \in I_2$ be two skew braces, where $\cdot = \circ_0 = \star_0$. Let $\lambda : (G, \circ_i) \to \Aut \, (G, \cdot)$ and $\mu : (G, \star_j) \to \Aut \, (G, \cdot)$ be the corresponding  homomorphisms. With this setup, we have

\begin{lemma}
The algebraic system $(G, \circ_i, \circ_j)$ is a skew brace if
$$
\mu_a (\lambda_b(c)) = \lambda_{a \mu_a(b)}(\lambda_a^{-1}(a^{-1}))
\left( \lambda_{a \mu_a(b) \lambda_{a \mu_a(b)}(\lambda_a^{-1}(a^{-1}))} (a \mu_a(c))  \right)
$$
for any $a, b, c \in G$.
\end{lemma}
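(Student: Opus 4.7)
The plan is to verify the skew brace identity for $(G, \circ_i, \star_j)$ directly. Since $(G, \circ_i)$ and $(G, \star_j)$ are already groups (being parts of the two linear brace systems), the only non-trivial condition to check is
\begin{equation*}
a \star_j (b \circ_i c) = (a \star_j b) \circ_i a^{\circ_i(-1)} \circ_i (a \star_j c) \quad \text{for all } a, b, c \in G.
\end{equation*}
I would rewrite both sides inside the additive group $(G, \cdot)$ via the formulas $a \circ_i b = a \cdot \lambda_a(b)$ and $a \star_j b = a \cdot \mu_a(b)$, and then compare the resulting expressions.

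For the left-hand side, unfolding $b \circ_i c = b \cdot \lambda_b(c)$ and applying $a \star_j(-)$, using that $\mu_a$ is an automorphism of $(G, \cdot)$, yields
\begin{equation*}
a \star_j (b \circ_i c) = a \cdot \mu_a(b) \cdot \mu_a(\lambda_b(c)).
\end{equation*}

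For the right-hand side, the first step is to identify $a^{\circ_i(-1)} = \lambda_a^{-1}(a^{-1})$, which follows from $a \circ_i \lambda_a^{-1}(a^{-1}) = a \cdot \lambda_a(\lambda_a^{-1}(a^{-1})) = 1$. One then computes $(a \star_j b) \circ_i a^{\circ_i(-1)} = a \cdot \mu_a(b) \cdot \lambda_{a \mu_a(b)}(\lambda_a^{-1}(a^{-1}))$, and, applying a further $\circ_i (a \star_j c)$, obtains a product whose outermost $\lambda$ is subscripted by the partial product $a \mu_a(b) \cdot \lambda_{a \mu_a(b)}(\lambda_a^{-1}(a^{-1}))$. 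Equating the two sides and cancelling the common prefix $a \cdot \mu_a(b)$ on the left yields precisely the displayed identity of the lemma.

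The main obstacle is purely bookkeeping: the nested $\lambda$-subscripts on the right-hand side have to be tracked very carefully, because each successive $\circ_i$-multiplication updates the subscript of the next occurrence of $\lambda$. Once those subscripts are written out correctly, the verification is mechanical and no further theory is needed beyond the descriptions of $\circ_i$ and $\star_j$ in terms of $\lambda$ and $\mu$.
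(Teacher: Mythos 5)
Your proposal is correct and follows essentially the same route as the paper's proof: verify the compatibility condition $a \star_j (b \circ_i c) = (a \star_j b) \circ_i a^{\circ_i(-1)} \circ_i (a \star_j c)$ by unfolding both sides in $(G,\cdot)$ via $a \circ_i b = a\,\lambda_a(b)$ and $a \star_j b = a\,\mu_a(b)$, using $a^{\circ_i(-1)} = \lambda_a^{-1}(a^{-1})$, and cancelling the common prefix $a\,\mu_a(b)$. The only remaining work is the bookkeeping of the nested $\lambda$-subscripts, which you have identified correctly.
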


\begin{proof}
Obviously $(G, \circ_i)$ and $(G,  \circ_j)$ are groups. We only weed to check the compatibility condition of  the skew brace:
$$
a \circ_j (b \circ_i c) = (a \circ_j b) \circ_i a^{\circ_i(-1)} \circ_i (a \circ_j c),~~~a, b, c \in G,
$$
where $a^{\circ_i(-1)}$ is the inverse to $a$ in the group $(G, \circ_i)$. Computing the left side, we get
$$
a \circ_j (b \circ_i c) = a \mu_a (b \lambda_b(c)) = a \mu_a(b) \mu_a (\lambda_b(c)),
$$
where we used the fact that $\mu_a$ is an automorphism of $(G, \cdot)$.

To compute the right side, notice that the inverse of $a$ in $(G, \circ_i)$ is $a^{\circ_i(-1)} =\lambda_a^{-1}(a^{-1})$, where $a^{-1}$ is the inverse of $a$ in $(G, \cdot)$.
Then the right side is equal to
$$
(a \circ_j b) \circ_i a^{\circ_i(-1)} \circ_i (a \circ_j c) = a \mu_a(b) \lambda_{a \mu_a(b)}(\lambda_a^{-1}(a^{-1}))
\left( \lambda_{a \mu_a(b) \lambda_{a \mu_a(b)}(\lambda_a^{-1}(a^{-1}))} (a \mu_a(c))  \right).
$$
Comparing the left and the right sides, we get the required compatibility condition.
\end{proof}


\section{A unification for known symmetric brace systems}\label{unification}

In this section we present a unification of most of the constructions of symmetric skew braces  available in the literature. The concept was introduced by L.N. Childs in 2019 (\cite{Chi}).  Alan Koch \cite{Koch20a, Koch22} constructed symmetric brace systems on a given group $G$ using abelian endomorphisms of $G$.  This construction was generalized  by  A. Caranti and L. Stefanello \cite{CS21a, CS21b}. We show that all theses symmetric skew braces are $\lambda$-homomorphic.

Let $G = (G, \cdot)$ be a group. Let $\bar A$ be an abelian subgroup of $\bar G := G/\Z(G)$. The inverse image of $\bar A$ in $G$  is $A$. Let $\Map(G, A)$ denote the set of all mappings from $G$ into $A$.  Define
$$\mathcal{A} := \{ f \in \Map(G, A) \mid  f \;\; \mbox{induces an endomorphism from $\bar G$ into $\bar A$}\}.$$
We say that $f_1, f_2 \in \mathcal{A}$ are related if both of these induce the same endomorphism $\bar G$ into $\bar A$. This is an equivalence relation. Let $\bar{\mathcal{A}}$ denote the set of all equivalence classes of the elements of $\mathcal{A}$. The equivalence class of an element $f \in \mathcal{A}$ will be denoted by $[f]$. For a group $G := (G, \cdot)$, we define
$$\mathcal{A}' := \{\alpha : G \times G \to \Z(G) \mid \alpha \mbox{ is bilinear and } \alpha(\Z(G), G) = \alpha(G, \Z(G)) = e\},$$
where $e \in G$ is the identity element. It follows from the definition that  $\alpha(\gamma_2(G), G) = \alpha(G, \gamma_2(G)) = e$.

Let $[f] \in \bar{\mathcal{A}}$ and $\alpha \in \mathcal{A}'$. Define a map $\lambda : G \to \Aut\,G$, $a \mapsto \lambda_a$, where
$$\lambda_a(b) = f(a)^{-\epsilon}bf(a)^{\epsilon}\, \alpha(a, b)$$
with $\epsilon = \pm 1$. We claim that $\lambda$ is a homomorphism and $\lambda(G)$ is an abelian subgroup of $\Aut \,G$. For $a, c \in G$, we know that $f(ac) = f(a)f(c)$ modulo $\Z(G)$. Thus $f(ac) = f(a)f(c)z$ for some  $z \in \Z(G)$. Let $b \in G$. Since $f(G)$ is abelian, we have
\begin{eqnarray*}
\lambda_{a c}(b) &=&  f(a)^{-\epsilon}f(c)^{-\epsilon}z^{-\epsilon}bz^{\epsilon}f(c)^{\epsilon}f(a)^{\epsilon} \, \alpha(ac, b)\\
&=&  f(a)^{-\epsilon}f(c)^{-\epsilon}bf(c)^{\epsilon} \, \alpha(c, b) f(a)^{\epsilon} \, \alpha(a, b)\\
&=& f(a)^{-\epsilon} \lambda_c(b) f(a)^{\epsilon} \, \alpha(a, b)\\
&=& f(a)^{-\epsilon} \lambda_c(b) f(a)^{\epsilon} \, \alpha(a, \lambda_c(b))\\
&=& \lambda_{a}(\lambda_c(b)).
\end{eqnarray*}
This proves that $\lambda_{ac} = \lambda_a \lambda_c$. That $\lambda(G)$ is abelian follows from the fact that $f(G)$ is abelian and $\alpha(G, G) \subseteq \Z(G)$. This settles our claim.

Notice that $\gamma_2(G) \Z(G) \le \Ker \, \lambda$ and $[G, \lambda(G)] \subseteq \gamma_2(G) \Z(G)$. Corollary \ref{cor-homo} now gives

\begin{proposition}
$(G, \cdot, \circ)$ is a symmetric skew brace, where $a \circ b = a \lambda_a(b)$ for all $a, b \in G$.
\end{proposition}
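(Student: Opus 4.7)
The plan is to invoke Corollary \ref{cor-homo} via Theorem \ref{t1}. By the computation preceding the proposition, the map $\lambda : G \to \Aut\,G$ is already shown to be a group homomorphism with abelian image $\lambda(G)$. Therefore the only remaining task is to confirm the two inclusions stated just before the proposition, namely
\[
\gamma_2(G)\,\Z(G)\;\le\;\Ker\,\lambda \qquad\text{and}\qquad [G,\lambda(G)]\;\subseteq\;\gamma_2(G)\,\Z(G),
\]
and then combine them to apply the two results.

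First I would verify the kernel inclusion. Take $a\in \gamma_2(G)\Z(G)$, written $a=cz$ with $c\in\gamma_2(G)$, $z\in\Z(G)$, and fix $b\in G$. Since the endomorphism of $\bar G$ induced by $f$ lands in the abelian group $\bar A$, it annihilates $\overline{\gamma_2(G)}$, and of course annihilates $\bar z=\bar e$; hence $f(a)\in\Z(G)$, so conjugation by $f(a)^{\pm\epsilon}$ is trivial. By the defining properties of $\mathcal{A}'$ (and the consequence noted in the paper that $\alpha$ vanishes on $\gamma_2(G)\times G$ and on $\Z(G)\times G$), bilinearity gives $\alpha(a,b)=\alpha(c,b)\alpha(z,b)=e$. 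So $\lambda_a(b)=b$, proving $a\in\Ker\,\lambda$.

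Next I would verify the second inclusion by a direct commutator calculation. For any $a,b\in G$,
\[
b^{-1}\lambda_a(b)\;=\;b^{-1}\,f(a)^{-\epsilon}\,b\,f(a)^{\epsilon}\,\alpha(a,b)\;=\;[b,f(a)^{\epsilon}]\,\alpha(a,b),
\]
which lies in $\gamma_2(G)\,\Z(G)$ because the commutator is in $\gamma_2(G)$ and $\alpha(a,b)\in\Z(G)$. Chaining the two inclusions yields $[G,\lambda(G)]\subseteq\Ker\,\lambda$, which is precisely the hypothesis of Theorem \ref{t1}.

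Finally, Theorem \ref{t1} produces a $\lambda$-homomorphic skew brace $(G,\cdot,\circ)$ with $a\circ b=a\,\lambda_a(b)$; since $\lambda(G)$ is abelian, Corollary \ref{cor-homo} upgrades this to a symmetric skew brace. I do not expect any genuine obstacle: all the bookkeeping is already built into the preceding discussion, and the only nontrivial input is noticing that the commutator identity $b^{-1}f(a)^{-\epsilon}bf(a)^{\epsilon}=[b,f(a)^{\epsilon}]$ is what lets the cocycle piece and the conjugation piece be absorbed separately into $\Z(G)$ and $\gamma_2(G)$.
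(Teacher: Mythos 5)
Your proposal is correct and follows essentially the same route as the paper: the paper asserts the two inclusions $\gamma_2(G)\Z(G) \le \Ker\lambda$ and $[G,\lambda(G)] \subseteq \gamma_2(G)\Z(G)$ with a "Notice that" and then cites Corollary \ref{cor-homo}, and you simply supply the (correct) verifications of those inclusions before applying Theorem \ref{t1} and Corollary \ref{cor-homo} in the same way.
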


For each pair $([f], \alpha) \in (\bar{\mathcal{A}},  \mathcal{A}')$, by Theorem \ref{t3} we get a symmetric linear brace system, which we denote by $\mathcal{B}_{(f, \alpha)}$. Since any two representatives of the class $[f]$ give rise to the same inner automorphism $\lambda_a$ for each $a \in G$, it follows that $\mathcal{B}_{(f, \alpha)}$ is well defined. We now prove

\begin{thm}
Let $G$ be a group and $A'$ its subgroup such that $A'\Z(G)/\Z(G)$ is abelian. Let $A := A'\Z(G)$ and the corresponding $\bar{\mathcal{A}}$ be as defined above. Then
$$\bigcup_{([f], \alpha) \in (\bar{\mathcal{A}},  \mathcal{A}')} \mathcal{B}_{(f, \alpha)}$$
is a symmetric brace system.
\end{thm}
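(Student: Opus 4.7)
The plan is to reduce everything to Theorem \ref{homo-union}. Since the symmetric brace system property is defined edge-by-edge on pairs of operations, it is enough to show that for any two pairs $([f_1], \alpha_1), ([f_2], \alpha_2) \in (\bar{\mathcal{A}}, \mathcal{A}')$, the union $\mathcal{B}_{(f_1, \alpha_1)} \cup \mathcal{B}_{(f_2, \alpha_2)}$ is a symmetric brace system, and for this Theorem \ref{homo-union} asks only that the hypotheses of Proposition \ref{homo-link} hold for the two base skew braces $(G, \cdot, \circ^{(1)}_1)$ and $(G, \cdot, \circ^{(2)}_1)$ built from $\lambda^{(i)}_a(b) = f_i(a)^{-\epsilon_i}\, b\, f_i(a)^{\epsilon_i}\, \alpha_i(a, b)$.

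First I would verify that $[\lambda^{(1)}(G), \lambda^{(2)}(G)] = \Id$ in $\Aut(G, \cdot)$. Write $u_i = f_i(a_i)^{\epsilon_i}$. Because $f_i(G) \subseteq A$ and $A/\Z(G)$ is abelian, $[u_1, u_2] \in \Z(G)$, so conjugation by $u_1$ and conjugation by $u_2$ commute as automorphisms of $G$. Expanding $\lambda^{(1)}_{a_1} \lambda^{(2)}_{a_2}(b)$ and $\lambda^{(2)}_{a_2} \lambda^{(1)}_{a_1}(b)$, one gets matching conjugation parts by the above, while the remaining $\alpha$-terms collapse using bilinearity together with $\alpha_i(G, \Z(G)) = e$: every cross-term of the shape $\alpha_i(\,\cdot\,, u_j^{\pm 1})$ telescopes by bilinearity, and $\alpha_i(\,\cdot\,, \alpha_j(\cdot,\cdot)) = e$ kills the outermost one. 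The two central remainders $\alpha_1(a_1,b)$ and $\alpha_2(a_2,b)$ lie in $\Z(G)$ and so commute with each other, finishing this step.

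Next, conditions (i) and (ii) of Proposition \ref{homo-link} are immediate: for any such $\lambda^{(i)}$, a direct computation gives $b^{-1}\lambda^{(i)}_a(b) = [b, f_i(a)^{\epsilon_i}] \, \alpha_i(a,b) \in \gamma_2(G)\Z(G)$, so $[(G, \cdot), \lambda^{(i)}(G)] \subseteq \gamma_2(G)\Z(G)$. On the other hand, $\bar f_j$ is a homomorphism into the abelian $\bar A$, hence kills $\gamma_2(\bar G)$, which shows $f_j(\gamma_2(G)) \subseteq \Z(G)$; combined with $\alpha_j(\gamma_2(G), G) = \alpha_j(\Z(G), G) = e$, this yields $\gamma_2(G)\Z(G) \subseteq \Ker \lambda^{(j)}$. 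Thus $[(G, \cdot), \lambda^{(i)}(G)] \subseteq \Ker \lambda^{(j)}$ for $\{i,j\} = \{1,2\}$.

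With both hypotheses of Proposition \ref{homo-link} in hand, Theorem \ref{homo-union} gives that $\mathcal{B}_{(f_1, \alpha_1)} \cup \mathcal{B}_{(f_2, \alpha_2)}$ is a symmetric brace system, meaning every pair of operations drawn from these two systems yields a symmetric skew brace. Since the pair $([f_1], \alpha_1), ([f_2], \alpha_2)$ was arbitrary, this pairwise property holds throughout the big union, which is precisely what it means for $\bigcup_{([f], \alpha)} \mathcal{B}_{(f, \alpha)}$ to be a symmetric brace system. The main technical obstacle is the commutation computation $\lambda^{(1)}_{a_1} \lambda^{(2)}_{a_2} = \lambda^{(2)}_{a_2} \lambda^{(1)}_{a_1}$: one must keep careful track of the bilinear cocycles $\alpha_i$ produced when pulling $\lambda^{(i)}$ across a $\lambda^{(j)}$, and repeatedly invoke $\alpha_i(G, \Z(G)) = e$ to annihilate the cross-terms.
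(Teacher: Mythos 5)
Your proposal is correct and follows essentially the same route as the paper: reduce to pairwise unions $\mathcal{B}_{(f_1,\alpha_1)} \cup \mathcal{B}_{(f_2,\alpha_2)}$, check that $\gamma_2(G)\Z(G) \subseteq \Ker\,\lambda^{(j)}$ and $[G, \lambda^{(i)}(G)] \subseteq \gamma_2(G)\Z(G)$, and invoke Theorem \ref{homo-union} via Proposition \ref{homo-link}. You merely make explicit the commutation $[\lambda^{(1)}(G), \lambda^{(2)}(G)] = \Id$ and conditions (i)--(ii), which the paper dispatches as ``easy to see.''
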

\begin{proof}
Let $[f_1], [f_2] \in \bar{\mathcal{A}}$ be any two equivalence classes, $\alpha_1, \alpha_2 \in \mathcal{A}'$ and $\mathcal{B}_{(f_i, \alpha_i)}$, $i = 1, 2$, be the corresponding symmetric brace systems. It suffices to show that $\mathcal{B}_{(f_1, \alpha_1)} \cup \mathcal{B}_{(f_2, \alpha_2)}$ is a symmetric brace system. If $\mathcal{B}_{(f_1, \alpha_1)} \subseteq \mathcal{B}_{(f_2, \alpha_2)}$ or $\mathcal{B}_{(f_2, \alpha_2)} \subseteq \mathcal{B}_{(f_1, \alpha_1)}$, then there is nothing to prove. So assume that the remaining case holds. To differentiate, assume that $_{(f_i, \alpha_i)}\lambda : G \to \Aut \,G$ are the homomorphisms induced by $(f_i, \alpha_i)$, $i = 1, 2$.  Let $(G, \cdot, \circ_{(f_1, \alpha_1)})$ be the first skew brace in $\mathcal{B}_{(f_1, \alpha_1)}$ and $(G, \cdot, \circ_{(f_2, \alpha_2)})$ be the first skew brace in $\mathcal{B}_{(f_2, \alpha_2)}$.  As we know that $\gamma_2(G) \le \Ker \, _{(f_i, \alpha_i)}\lambda$ and $[G, \,_{(f_i, \alpha_i)}\lambda(G)] \subseteq \gamma_2(G)$ for $i = 1, 2$.   It is now easy to see that the hypotheses of Theorem \ref{homo-union} are satisfied. Hence $\mathcal{B}_{(f_1, \alpha_1)} \cup \mathcal{B}_{(f_2, \alpha_2)}$ is a symmetric brace system, and the proof is complete.
\end{proof}

\bigskip

\section{Further properties} \label{FP}

In the present section we record various properties on the skew braces considered in  previous sections. Let  $(G, \cdot, \circ)$ be a $\lambda$-homomorphic skew brace such that $b^{-1}\lambda_a(b) \in \Z(G, \cdot)$ for $a, b \in G$. Then it easily follows that $\lambda_a(b)b^{-1} \in \Z(G, \cdot)$.

\begin{proposition}
Let $(G, \cdot, \circ)$ be a $\lambda$-homomorphic skew brace with $\lambda(G)$ abelian. If $[G, \lambda(G)] \subseteq \Z(G)$, then $(G, \cdot, \circ)$ is a two-sided skew brace.
\end{proposition}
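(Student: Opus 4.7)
My plan is to verify the skew right brace axiom
\[
(a \cdot b) \circ c = (a \circ c) \cdot c^{-1} \cdot (b \circ c), \qquad a,b,c \in G,
\]
since the left brace axiom is already part of the hypothesis; together they yield the two-sided structure.

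First I will expand both sides using $x \circ y = x \lambda_x(y)$ together with the $\lambda$-homomorphic identity $\lambda_{ab} = \lambda_a \lambda_b$. The left side becomes $a b\, \lambda_a(\lambda_b(c))$, while the right side is literally $a \lambda_a(c) c^{-1} b \lambda_b(c)$. I then introduce the displacements $w := c^{-1}\lambda_a(c)$ and $z := c^{-1}\lambda_b(c)$. By the centrality hypothesis both lie in $[G,\lambda(G)] \subseteq \Z(G)$, and they also sit in $\Ker \lambda$ by Theorem \ref{t1}. Writing $\lambda_a(c) = c w$ and $\lambda_b(c) = c z$ and moving the central factors $w, z$ freely past the other letters, the right side collapses to $abc \cdot wz$. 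On the left, $\lambda_a(\lambda_b(c)) = \lambda_a(cz) = \lambda_a(c)\, \lambda_a(z) = c w\, \lambda_a(z)$, so the left side becomes $abc \cdot w\, \lambda_a(z)$. After cancellation the axiom reduces to the single identity
\[
\lambda_a(z) = z, \qquad z \in [G,\lambda(G)],\ a \in G,
\]
i.e., that $\lambda(G)$ fixes $[G,\lambda(G)]$ pointwise.

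To settle this fixed-point claim I will use all three hypotheses in concert. Applying $\lambda_a \lambda_b = \lambda_b \lambda_a$ together with the centrality of $w$, a short computation yields
\[
\lambda_a(z) \cdot z^{-1} = w^{-1}\, \lambda_b(w)
\]
for $z = c^{-1}\lambda_b(c)$ and $w = c^{-1}\lambda_a(c)$. In other words the defect of $\lambda_a$ at $z$ equals the symmetric defect of $\lambda_b$ at $w$, tying two elements of $[G,\lambda(G)]$ together. The main obstacle is to rule out a nontrivial common defect, since a naive iteration of the relation above is circular. I would therefore leverage the fact that $\lambda(G)$ is an abelian group of automorphisms acting on the abelian module $\Z(G) \cap \Ker\lambda$, combine this with the symmetric identity and with $\lambda_z = \mathrm{Id}$ for $z \in \Ker\lambda$, to force the common defect to be trivial. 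Once $\lambda_a(z) = z$ has been secured, the reduction above closes and $(G, \cdot, \circ)$ is two-sided.
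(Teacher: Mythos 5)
Your reduction is correct, and it is essentially the same computation the paper performs: expanding both sides and using centrality of $w=c^{-1}\lambda_a(c)$ and $z=c^{-1}\lambda_b(c)$, the right-brace axiom is equivalent to the fixed-point identity $\lambda_a(z)=z$ for all $a$ and all $z\in[G,\lambda(G)]$. The genuine gap is your final paragraph: you never prove this identity, and in fact no proof from the stated hypotheses is possible, because the identity (and with it the proposition as stated) can fail. Take $G=(\mathbb{Z}/p\mathbb{Z})^3$ with $p$ odd, written additively with basis $e_1,e_2,e_3$, let $A=\Id+N$ where $Ne_1=e_2$, $Ne_2=e_3$, $Ne_3=0$ (so $A^p=\Id$), and define $\lambda_{(x_1,x_2,x_3)}:=A^{x_1}$. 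Then $\lambda$ is a homomorphism with cyclic image, $[G,\lambda(G)]\subseteq\Ker\lambda=\langle e_2,e_3\rangle$, and $[G,\lambda(G)]\subseteq\Z(G)=G$ trivially, so $a\circ b:=a+\lambda_a(b)$ is a $\lambda$-homomorphic brace satisfying every hypothesis; yet $e_2=-e_1+\lambda_{e_1}(e_1)\in[G,\lambda(G)]$ is moved by $\lambda_{e_1}$ to $e_2+e_3$, and correspondingly $(e_1+e_1)\circ e_1=3e_1+2e_2+e_3$ while $(e_1\circ e_1)-e_1+(e_1\circ e_1)=3e_1+2e_2$, so this brace is not two-sided. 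Your symmetric relation $\lambda_a(z)z^{-1}=w^{-1}\lambda_b(w)$ only equates the two defects and cannot force them to vanish: in this example, with $a=b=c=e_1$, both sides equal $e_3\neq 0$.

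You should also be aware that the paper's own proof hides exactly this step: in its chain of equalities, the passage from $(a\cdot b)\cdot\lambda_a(c)\cdot\lambda_a(c^{-1}\cdot\lambda_b(c))$ to $(a\cdot b)\cdot\lambda_a(c)\cdot c^{-1}\cdot\lambda_b(c)$ silently uses $\lambda_a(c^{-1}\lambda_b(c))=c^{-1}\lambda_b(c)$, which is precisely the fixed-point claim you isolated and could not justify; so your suspicion that the argument becomes circular was sound, and the obstruction lies in the statement rather than in your route. The proposition does become true, with your reduction (or the paper's computation) closing immediately, under the stronger hypothesis that $\lambda(G)$ fixes $[G,\lambda(G)]$ pointwise, for instance if each $\lambda_a$ restricts to the identity on $\Z(G)$. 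That stronger condition holds in the application made right after the proposition, namely the braces of Section \ref{unification} on groups of nilpotency class $2$: there $[G,\lambda(G)]\subseteq\gamma_2(G)\Z(G)=\Z(G)$ and $\lambda_a(z)=f(a)^{-\epsilon}zf(a)^{\epsilon}\alpha(a,z)=z$ for $z\in\Z(G)$ because $\alpha(G,\Z(G))=e$, so those skew braces are indeed two-sided.
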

\begin{proof}
Let $(G, \cdot, \circ)$ be a skew braces as given in the hypothesis. We need to establish the relation
$$(a \cdot b) \circ c = (a \circ c) \cdot c^{-1} \circ (b \circ c)$$
for all  $a, b,c \in G$. Since $G$ is $\lambda$-homomorphic, we have
\begin{eqnarray*}
(a \cdot b) \circ c &=& (a \cdot b) \cdot \lambda_{a \cdot b}(c)\\
&=&  (a \cdot b) \cdot \lambda_{a}(\lambda_{b}(c))\\
&=&  (a \cdot b) \cdot \lambda_{a}(c\cdot c^{-1} \cdot \lambda_{b}(c))\\
&=&  (a \cdot b) \cdot \lambda_{a}(c) \cdot \lambda_a(c^{-1} \cdot \lambda_{b}(c))\\
&=&  (a \cdot b) \cdot \lambda_{a}(c) \cdot c^{-1} \cdot \lambda_{b}(c)\\
&=&  a \cdot b \cdot (\lambda_{a}(c) \cdot c^{-1}) \cdot b^{-1} \cdot (b \cdot \lambda_{b}(c))\\
&=&  (a  \cdot \lambda_{a}(c)) \cdot c^{-1} \cdot (b \cdot \lambda_{b}(c))\\
&=& (a \circ c) \cdot c^{-1} \cdot (b \circ c).
\end{eqnarray*}
The proof is now complete.
\end{proof}

Let us see some examples. Let $(G, \cdot)$ be a group of nilpotency class $2$ and $(G, \cdot, \circ)$ be any skew brace considered in Section \ref{unification}. Then  $[(G, \cdot), \lambda(G, \cdot)] \subseteq \Ker \, \lambda \le \Z(G)$. Hence all these skew braces are two-sided. {\it One can then construct radical rings from these  two-sided skew braces}.

We now construct symmetric skew braces from $\lambda$-homomorphic skew braces.  For that, we first recall the definition of  opposite skew brace introduced in \cite[Section 3]{R19} and further studied in \cite{KT21}.

\begin{definition}
Let $(G, \cdot, \circ)$ be a skew left brace. Define $\cdot^{op}: G \times G \to G$ by $a \cdot^{op} b = b \cdot a$ for all $a, b \in G$. Then $(G, \cdot^{op})$ is a group and  $(G, \cdot^{op}, \circ)$ is a skew left brace, which is called the {\it opposite skew left brace} of the given skew left brace $(G, \cdot, \circ)$.
\end{definition}

If $\circ = \cdot^{op}$, then $(G, \cdot^{op}, \circ)$ is a trivial skew brace. So we are mainly interested in non-trivial opposite skew braces; $\circ \ne \cdot^{op}$.

\begin{proposition}
Let $(G, \cdot, \circ)$ be a  $\lambda$-homomorphic skew left brace. Then the opposite skew brace $(G, \cdot^{op}, \circ)$ is symmetric if and only if $\Inn \,(G, \cdot)$ centralizes $\lambda \, (G, \cdot)$.  \end{proposition}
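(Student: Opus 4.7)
The plan is to apply Proposition \ref{prop1} to the opposite skew brace $(G, \cdot^{op}, \circ)$ and translate the resulting identity into a statement purely about $\Inn\,(G,\cdot)$ and $\lambda\,(G,\cdot)$.

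First I would compute the $\lambda$-map $\mu$ of the opposite brace. Since $a \cdot^{op} b = b \cdot a$ and the inverse of $a$ in $(G, \cdot^{op})$ coincides with $a^{-1}$ in $(G,\cdot)$, the defining formula $\mu_a(b) = a^{-1} \cdot^{op} (a \circ b)$ together with $a \circ b = a \cdot \lambda_a(b)$ yields the clean expression
$$\mu_a(b) = (a \circ b)\cdot a^{-1} = a\,\lambda_a(b)\,a^{-1} = \gamma_a \lambda_a(b),$$
where $\gamma_a(x) = axa^{-1}$ is conjugation by $a$ in $(G, \cdot)$. Thus $\mu_a = \gamma_a \lambda_a$ as an element of $\Aut\,(G,\cdot^{op}) = \Aut\,(G,\cdot)$; this formula is the key technical input.

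Next, by Proposition \ref{prop1} applied to $(G, \cdot^{op}, \circ)$, the opposite brace is symmetric iff $\mu_{a \circ b} = \mu_{b \cdot^{op} a} = \mu_{a \cdot b}$ for all $a, b \in G$. Because $(G, \cdot, \circ)$ is $\lambda$-homomorphic and $\lambda : (G, \circ) \to \Aut\,(G,\cdot)$ is always a homomorphism, one has $\lambda_{a \cdot b} = \lambda_a \lambda_b = \lambda_{a \circ b}$; cancelling these equal $\lambda$-parts in the identity above reduces the condition to $\gamma_{a \circ b} = \gamma_{a \cdot b}$. Substituting $a \circ b = a \lambda_a(b)$ and cancelling $\gamma_a$, this further boils down to
$$\gamma_{\lambda_a(b)} = \gamma_b \text{ for all } a,b \in G, \qquad \text{equivalently } b^{-1}\lambda_a(b) \in \Z(G,\cdot).$$

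Finally I would check that this last condition coincides with $[\Inn\,(G,\cdot), \lambda\,(G,\cdot)] = \Id$ in $\Aut\,(G,\cdot)$. Expanding $\gamma_a \lambda_c = \lambda_c \gamma_a$ on an arbitrary element and using that $\lambda_c$ is an automorphism rewrites the commutation of $\gamma_a$ and $\lambda_c$ as the assertion that $\lambda_c(a)^{-1}a$ centralises $\lambda_c(G) = G$, i.e.\ $a^{-1}\lambda_c(a) \in \Z(G,\cdot)$ for all $a,c \in G$, which is exactly the condition derived above. The only step that requires genuine care is the computation of $\mu$; after that everything is formal, and I anticipate no real obstacle.
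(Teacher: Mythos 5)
Your proof is correct, and both directions check out. The key technical input is the same as the paper's: the $\lambda$-map of the opposite brace is $\mu_a=\iota_a\lambda_a$, where $\iota_a$ is conjugation by $a$ in $(G,\cdot)$ (your $\gamma_a$). Where you diverge is in the logical route: the paper assumes that $\Inn\,(G,\cdot)$ centralizes $\lambda(G,\cdot)$, verifies by a direct computation that $\mu$ is then an anti-homomorphism on $(G,\cdot^{op})$, and concludes symmetry via Proposition \ref{anti-prop2}, leaving the converse as an exercise for the reader. You instead apply the criterion of Proposition \ref{prop1} directly to $(G,\cdot^{op},\circ)$, use the $\lambda$-homomorphic identity $\lambda_{a\circ b}=\lambda_{a\cdot b}$ to cancel the $\lambda$-parts of $\mu_{a\circ b}=\mu_{a\cdot b}$, and reduce to $\gamma_{\lambda_a(b)}=\gamma_b$, i.e.\ $b^{-1}\lambda_a(b)\in\Z(G,\cdot)$, which you then correctly identify (using surjectivity of each $\lambda_c$) with the vanishing of $[\Inn\,(G,\cdot),\lambda(G,\cdot)]$. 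Since every step in your reduction is an equivalence, your argument yields the full ``if and only if'' in one pass, which is what your approach buys over the paper's one-directional computation plus exercise; the paper's route, in turn, makes explicit the structural fact that under the centralizing hypothesis the opposite brace is $\lambda$-anti-homomorphic, which is slightly more information than bare symmetry.
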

\begin{proof}
Since  $(G, \cdot^{op}, \circ)$ is a skew left brace, there exists a homomorphism $\lambda' : (G, \circ) \to \Aut \, (G, \cdot^{op})$, $a \mapsto \lambda'_a$, where $\lambda'_a(b) = a^{-1} \cdot^{op} (a \circ b)$.  We know that $\lambda : (G, \cdot) \to \Aut \, (G, \cdot)$, $a \mapsto \lambda_a$ is a homomorphism, where $\lambda_a(b) = a^{-1} \cdot (a \circ b)$. Notice that

$$\lambda'_a(b) = a^{-1} \cdot^{op} (a \circ b) = (a \circ b) \cdot a^{-1} = \iota_a( \lambda_a(b)),$$
where $\iota_a(x) = a \cdot x \cdot a^{-1}$ for $x \in G$.  Assume that $\Inn \,(G, \cdot)$ centralizes $\lambda \, (G, \cdot)$. Now for $a, b, c \in (G, \cdot^{op})$, we have

\begin{eqnarray*}
\lambda'_{a \cdot^{op} b}(c) &=& \iota_{a \cdot^{op} b} (\lambda_{a \cdot^{op} b}(c))\\
&=& \iota_{b \cdot a} (\lambda_{b \cdot a}(c))\\
&=& \iota_b (\iota_a (\lambda_b (\lambda_a (c))))\\
&=& \iota_b (\lambda_b ( \iota_a(\lambda_a (c))))\\
&=& \lambda'_b (\lambda'_a(c)).
\end{eqnarray*}

This implies that $\lambda' : (G, \cdot^{op}) \to  \Aut \, (G, \cdot^{op})$ is an anti-homomorphism. Hence $(G, \cdot^{op}, \circ)$ is an anti-homomorphic skew brace, and therefore symmetric by Proposition \ref{anti-prop2}. 

The converse part is left as an exercise for the reader.
\end{proof}

It is now not difficult to prove
\begin{cor}
Let $(G, \cdot, \circ)$ be a  $\lambda$-homomorphic skew left brace such that $\lambda (G, \cdot)$ is abelian and $\Inn \,(G, \cdot)$ centralizes $\lambda (G, \cdot)$.  Then $(G, \circ, \cdot^{op})$ is a  two-sided skew brace.
\end{cor}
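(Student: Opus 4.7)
The plan is to split the ``two-sided'' assertion into the skew-left-brace axiom and the skew-right-brace axiom for $(G, \circ, \cdot^{op})$, obtain the former from the preceding Proposition, and verify the latter by a direct computation hinging on a short identity for $\lambda$. First I would note that the hypothesis ``$\Inn(G, \cdot)$ centralizes $\lambda(G, \cdot)$'' is equivalent to $[G, \lambda(G)] \subseteq \Z(G)$: writing $\lambda_a \iota_b = \iota_b \lambda_a$ and applying both sides to an arbitrary $c \in G$ gives $\lambda_a(b)\lambda_a(c)\lambda_a(b)^{-1} = b \lambda_a(c) b^{-1}$, whence $b^{-1}\lambda_a(b)$ commutes with every element of $\lambda_a(G) = G$ and so lies in $\Z(G)$; the reverse implication is equally immediate. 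Under this reformulation the preceding Proposition yields that $(G, \cdot^{op}, \circ)$ is symmetric, which by definition is the statement that $(G, \circ, \cdot^{op})$ is a skew left brace, so the left-brace axiom is already settled. It remains to verify the right-brace axiom, which after rewriting $x \cdot^{op} y = y \cdot x$ takes the form
\begin{equation*}
c \cdot (a \circ b) \;=\; (c \cdot a) \circ c^{\circ(-1)} \circ (c \cdot b), \qquad a, b, c \in G.
\end{equation*}

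The key technical input is the identity
\begin{equation*}
\lambda_{\lambda_a(c)} = \lambda_c \qquad \text{for all } a, c \in G.
\end{equation*}
This follows purely from the $\lambda$-homomorphic property: the observation $[(G, \cdot), \lambda(G, \cdot)] \subseteq \Ker \lambda$ recorded just after \eqref{eq1} gives $\lambda_a(c) = c \cdot z$ with $z \in \Ker \lambda$, whence $\lambda_{\lambda_a(c)} = \lambda_{c z} = \lambda_c \lambda_z = \lambda_c$.

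With this identity in hand, the computation is routine. Using $c^{\circ(-1)} = \lambda_{c^{-1}}(c^{-1})$, the homomorphism property $\lambda_{xy} = \lambda_x \lambda_y$, and commutativity of $\lambda(G)$, one first obtains
\begin{equation*}
(c \cdot a) \circ c^{\circ(-1)} \;=\; (c a) \cdot \lambda_c \lambda_a \lambda_{c^{-1}}(c^{-1}) \;=\; (c a) \cdot \lambda_a(c)^{-1}.
\end{equation*}
Setting $w := c a \lambda_a(c)^{-1}$ and applying the key identity together with abelianness of $\lambda(G)$,
\begin{equation*}
\lambda_w \;=\; \lambda_c \lambda_a \lambda_{\lambda_a(c)}^{-1} \;=\; \lambda_c \lambda_a \lambda_c^{-1} \;=\; \lambda_a,
\end{equation*}
so $w \circ (c b) = w \cdot \lambda_a(c b) = c a \lambda_a(c)^{-1} \cdot \lambda_a(c) \lambda_a(b) = c \cdot (a \circ b)$, which is the required right-brace identity. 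The main obstacle is isolating $\lambda_{\lambda_a(c)} = \lambda_c$; without that step the expression $\lambda_c \lambda_a \lambda_c^{-1}$ cannot be collapsed to $\lambda_a$ and the final computation does not close up.
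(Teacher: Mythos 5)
Your proof is correct. The paper itself gives no argument for this corollary (it is prefaced only by ``It is now not difficult to prove''), so there is no written proof to compare against; your decomposition is a perfectly legitimate way to fill the gap. The left-brace half is exactly what the immediately preceding proposition supplies (your reformulation of the centralizing hypothesis as $[G,\lambda(G)]\subseteq \Z(G,\cdot)$ is correct but is never used afterwards, so that paragraph could be dropped), and your direct verification of the right-brace law $c\cdot(a\circ b)=(c\cdot a)\circ c^{\circ(-1)}\circ(c\cdot b)$ checks out step by step: the inverse formula $c^{\circ(-1)}=\lambda_{c^{-1}}(c^{-1})$, the identity $\lambda_{\lambda_a(c)}=\lambda_c$ (which does follow from $[G,\lambda(G)]\subseteq\Ker\lambda$, valid for every $\lambda$-homomorphic skew brace), and commutativity of $\lambda(G)$ are all that is needed. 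It is worth noticing what your computation actually establishes: the skew-right-brace axiom for $(G,\circ,\cdot^{op})$ holds for \emph{any} $\lambda$-homomorphic skew brace with $\lambda(G)$ abelian, and the hypothesis that $\Inn(G,\cdot)$ centralizes $\lambda(G,\cdot)$ enters only through the proposition, i.e.\ only to make $(G,\circ,\cdot^{op})$ a skew left brace in the first place. In spirit your verification parallels the paper's proof of the first proposition of Section~\ref{FP} (two-sidedness of $(G,\cdot,\circ)$ when $[G,\lambda(G)]\subseteq\Z(G)$), but transported to the triple $(G,\circ,\cdot^{op})$ and resting on kernel properties of $\lambda$ rather than on centrality.
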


For a group $G$, let $\Autcent(G) := \C_{\Aut(G)}(\Inn(G))$ be the group of all central automorphisms of $G$. Then any $\lambda$-homomorphic skew left brace $(G, \cdot, \circ)$ with $\lambda(G, \circ) \le \Autcent(G, \cdot)$ gives  rise to a symmetric skew brace, namely $(G, \cdot^{op}, \circ)$. In addition, if $\lambda(G, \circ)$ is abelian, then $(G, \circ, \cdot^{op})$ is a  two-sided skew brace. \emph{ It is tempting to construct  examples of such skew braces in which  $\lambda(G, \circ)$ is not contained in $\Inn(G, \cdot)$. }
\bigskip

Let $(G, \{\circ_i\}_{i \in I})$ be a symmetric linear brace system constructed from the $\lambda$-homomorphic skew brace $(G, \circ_0, \circ_1)$ with $\lambda(G)$ abelian  by the procedure of  Section 4. Then it follows from Lemma \ref{ker-lem} that each group $(G, \circ_i)$ is an extension of $\Ker \lambda$ by the abelian group $\lambda(G)$. It might very well happen that all $(G, \circ_i)$ constitute a single equivalence class of  extensions.  An example of such a symmetric linear brace system can be constructed from the $\lambda$-homomorphic brace $(\mathbb{Z}^2,  +, \circ_1)$ with $(\mathbb{Z}^2, +)$  a free abelian group with free basis $\{x_1, x_2\}$ and $a \circ_1 b = a + \lambda_a(b)$ for all $a, b \in \mathbb{Z}^2$, where $\lambda_{x_i}$ is the automorphism of  $\mathbb{Z}^2$ defined on the basis elements as follows:
\begin{equation*}
\lambda_{x_i} = \left\{
\begin{array}{l}
x_1 \mapsto (1+p) x_1 - p x_2,\\
x_2 \mapsto p x_1 + (1-p) x_2.\\
\end{array}
\right.
\end{equation*}
It follows from \cite[Theorem 4.9 (2)]{BNY22} that $(\mathbb{Z}^2, \circ_1)$ is isomorphic to $(\mathbb{Z}^2,  +)$. Using Lemma \ref{lem1} and then Theorem \ref{t3} we can construct a symmetric linear brace system $\mathcal{B}_I(\mathbb{Z}^2) = (\mathbb{Z}^2, \{\circ_i\}_{i \in I})$  such that $(\mathbb{Z}^2, \circ_i)$ is isomorphic to $(\mathbb{Z}^2,  +)$, where $I = \{0\} \cup \mathbb{N}$ and $\circ_0 = +$. One more such construction is carried out in the next section (Theorem \ref{cyclic1} and its corollary). On the other hand, as we will see again in the next section (Corollaries  \ref{s-last-cor}, \ref{last-cor}), there do exists a symmetric linear  brace system which consists of infinitely many distinct equivalence classes of the extensions mentioned above.

\medskip
We recall  the definition of isomorphism of skew braces.

\begin{definition}
Let $(G, \cdot , \circ)$ and $(H, \cdot , \circ )$
be skew left braces. We say that they are isomorphic if there is a map
$\varphi: G \rightarrow H$ such that
$\varphi: ( G, \cdot ) \rightarrow ( H, \cdot  )$,
$\varphi: ( G, \circ  ) \rightarrow ( H, \circ )$
are group isomorphisms.
\end{definition}

For any  $a,b,c \in G$, the following holds from the definition:
$$
\varphi(a \circ (b \cdot c)) =
(\varphi(a)\circ \varphi(b)) \cdot \varphi(a)^{-1} \cdot (\varphi(a)\circ \varphi(c)).
$$

\begin{definition}
Let $(G, \cdot , \circ) = (H, \cdot , \circ )$, then any brace isomorphism from $(G, \cdot , \circ)$ to $(H, \cdot , \circ )$ is called a {\it brace automorphism} of $(G, \cdot , \circ)$. Let us denote the group of all brace automorphisms of  $(G, \cdot , \circ)$  by  $\Autb \; G$.
\end{definition}

From Lemma \ref{lem1} we have
\begin{proposition}
Let $(G, \cdot , \circ)$ be a $\lambda$-homomorphic skew brace with $\lambda(G)$ is abelian. Then $\lambda(G) \le \Autb \; G$.
\end{proposition}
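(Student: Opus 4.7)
The plan is to show that for every $a \in G$, the map $\lambda_a$ is simultaneously an automorphism of $(G, \cdot)$ and of $(G, \circ)$, and then observe that any such map automatically satisfies the skew brace compatibility, making it a brace automorphism.

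First I would recall that $\lambda_a \in \Aut(G, \cdot)$ is given for free: this is built into the very definition of the $\lambda$-map attached to the skew brace $(G, \cdot, \circ)$. So one half of the work is already done and requires no computation.

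Second, I would invoke Lemma \ref{lem1}, which is precisely tailored for the hypothesis at hand. Because $(G, \cdot, \circ)$ is $\lambda$-homomorphic with $\lambda(G)$ abelian, that lemma promotes $\lambda$ to a group homomorphism $(G, \circ) \to \Aut(G, \circ)$. In particular, for every $a \in G$, the map $\lambda_a$ is an automorphism of $(G, \circ)$ as well. This is the only nontrivial ingredient, and since it has already been proved earlier in the paper, the step is immediate; there is really no obstacle to speak of beyond citing the right lemma.

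Finally, I would verify that a set bijection $\varphi: G \to G$ that is an automorphism of both $(G, \cdot)$ and $(G, \circ)$ is automatically a brace automorphism. Applying $\varphi$ to both sides of the skew brace identity $a \circ (b \cdot c) = (a \circ b) \cdot a^{-1} \cdot (a \circ c)$ and using bi-multiplicativity gives
\[
\varphi(a) \circ \bigl(\varphi(b)\cdot\varphi(c)\bigr) = \bigl(\varphi(a)\circ \varphi(b)\bigr) \cdot \varphi(a)^{-1} \cdot \bigl(\varphi(a)\circ \varphi(c)\bigr),
\]
which is the required compatibility from the isomorphism definition. Hence $\lambda_a \in \Autb\, G$ for every $a \in G$, and so $\lambda(G) \le \Autb\, G$, as claimed.
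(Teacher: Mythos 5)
Your argument is correct and is essentially the paper's own proof: the paper simply cites Lemma \ref{lem1} to promote $\lambda_a$ to an automorphism of $(G,\circ)$, and since the paper's definition of a brace automorphism only asks for a map that is simultaneously an automorphism of $(G,\cdot)$ and $(G,\circ)$ (the compatibility identity being automatic), your third step just makes explicit what the definition already grants.
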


In a given  symmetric linear brace system $(G, \{\circ_i\}_{i \in I})$, it is not easy to specify whether two  symmetric skew braces $(G, \circ_i , \circ_j)$ and $(G, \circ_{i'} , \circ_{j'})$ are isomorphic or not.
We now establish a criterion for checking such an isomorphism. However, applying the same does not seem easy.

Suppose that we have two $\lambda$-homomorphic
skew left braces
$( G, \cdot , \circ )$ and $( H, \cdot , \circ )$ which
are constructed by homomorphisms  $\lambda: G \rightarrow \mathrm{Aut}\,G$ and
$\mu: H \rightarrow \mathrm{Aut}\,H$, that is,  the operations are defined by the rules
$$
a\circ b=a \lambda_a(b), \quad a, b \in G,
$$
$$
x\circ y=x \mu_x(y), \quad x, y \in H.
$$
Let us find conditions under which these braces are isomorphic.

If $\varphi$ is a brace isomorphism, then
$$
\varphi(ab)=\varphi(a)\varphi(b), \quad
\varphi(a\circ b)=\varphi(a)\circ \varphi(b), \quad a, b \in G.
$$
The second equality has the form:
$$
\varphi(a \lambda_a(b))=\varphi(a)\mu_{\varphi(a)} (\varphi(b)) \Leftrightarrow
\varphi(a) \varphi(\lambda_a(b))=\varphi(a)\mu_{\varphi(a)} (\varphi(b)) \Leftrightarrow
\varphi(\lambda_a(b))=\mu_{\varphi(a)} (\varphi(b)).
$$
We can write the last equality in the form
$$
\varphi \lambda_a \varphi^{-1} \varphi(b)=\mu_{\varphi(a)} (\varphi(b)).
$$
Since $\varphi$ is an isomorphism, we have
$$
\varphi \lambda_a \varphi^{-1} =\mu_{\varphi(a)}, \quad a \in G.
$$
Hence, we have proved

\medskip

\begin{thm}
Two $\lambda$-homomorphic skew  braces
$( G, \cdot , \circ )$ and $( H, \cdot , \circ )$,
which are constructed by homomorphisms $\lambda: G \rightarrow \mathrm{Aut}\,G$ and
$\mu: H \rightarrow \mathrm{Aut}\,H$, respectively, are isomorphic if and only if there is an isomorphism
$$
\varphi: ( G, \cdot ) \rightarrow ( H, \cdot )
$$
such that
$$
\varphi \lambda_a \varphi^{-1} =\mu_{\varphi(a)} \quad \mbox{for all } a \in G.
$$
\end{thm}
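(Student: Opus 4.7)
The forward direction is essentially carried out in the paragraph preceding the theorem statement, so my plan is simply to reorganize that calculation cleanly as the $(\Rightarrow)$ half. Assuming $\varphi$ is a brace isomorphism, I would start from $\varphi(a \circ b) = \varphi(a) \circ \varphi(b)$, substitute the definitions $a \circ b = a \lambda_a(b)$ in $G$ and $x \circ y = x \mu_x(y)$ in $H$, use that $\varphi$ is a group homomorphism for $\cdot$ to cancel $\varphi(a)$ on both sides, and conclude $\varphi(\lambda_a(b)) = \mu_{\varphi(a)}(\varphi(b))$ for all $a, b \in G$. Since $\varphi : (G,\cdot) \to (H,\cdot)$ is bijective, rewriting this as an equality of maps on $H$ yields $\varphi \lambda_a \varphi^{-1} = \mu_{\varphi(a)}$.

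For the $(\Leftarrow)$ direction, suppose $\varphi : (G,\cdot) \to (H,\cdot)$ is a group isomorphism satisfying $\varphi \lambda_a \varphi^{-1} = \mu_{\varphi(a)}$ for every $a \in G$. I need to show that the same $\varphi$ is also a group isomorphism $(G,\circ) \to (H,\circ)$; together with the hypothesis, this will make $\varphi$ a brace isomorphism by the definition recalled in the excerpt. The computation is direct:
\begin{eqnarray*}
\varphi(a \circ b) &=& \varphi(a \cdot \lambda_a(b)) \\
&=& \varphi(a) \cdot \varphi(\lambda_a(b)) \\
&=& \varphi(a) \cdot (\varphi \lambda_a \varphi^{-1})(\varphi(b)) \\
&=& \varphi(a) \cdot \mu_{\varphi(a)}(\varphi(b)) \\
&=& \varphi(a) \circ \varphi(b).
\end{eqnarray*}
Combined with the fact that $\varphi$ is already a bijection compatible with $\cdot$, this shows $\varphi$ is a group isomorphism $(G,\circ) \to (H,\circ)$ as well, completing the proof.

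There is no real obstacle: both directions boil down to the observation that, for $\lambda$-homomorphic braces, the operation $\circ$ is entirely determined by $\cdot$ together with the map $\lambda$, so compatibility of $\varphi$ with $\circ$ is equivalent to the intertwining relation $\varphi \lambda_a \varphi^{-1} = \mu_{\varphi(a)}$. The only thing worth emphasizing is that one does not need to separately check that $\varphi$ is a group homomorphism for $\circ$ in the reverse direction; that follows automatically from the displayed calculation above, since the hypothesis transports the $\lambda$-structure on $G$ to the $\mu$-structure on $H$.
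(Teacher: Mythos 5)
Your proposal is correct and follows essentially the same computation as the paper: the paper's argument is a chain of equivalences showing that, for a group isomorphism $\varphi$ of the additive groups, compatibility with $\circ$ is equivalent to $\varphi \lambda_a \varphi^{-1} = \mu_{\varphi(a)}$, which is exactly what you spell out by separating the two directions. Making the reverse implication explicit is a reasonable presentational choice but not a different argument.
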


\medskip

We  conclude this section by generalizing  the concept of meta-trivial skew brace introduced in \cite{BNY22} by defining  a poly-trivial skew brace.  Let us first recall the definition of an ideal of a skew brace (see \cite{GV2017}).
A non-empty subset $I$ of a skew left brace $(G,\cdot,\circ)$ is said to be an {\it ideal} of $G$ if

1) $\lambda_a(I) \subseteq I$ for all $a \in G$,

2) $I$ is normal in $(G, \cdot)$,

3) $I$ is normal in~$(G, \circ)$.

For example, the kernel of any skew left brace homomorphism is an ideal.
The notion of ideal allows us to consider the quotient skew left brace~$G/I$. Note that the condition~1) is equivalent to the equality $a\circ I = aI$ for all $a\in G$. A skew left brace $(G,\cdot,\circ)$ is said to be {\it meta-trivial} if it admits a ideal $I$ such that both $I$ and the quotient skew skew brace $G/I$ are trivial. This concept can be generalised as follows:

\begin{definition}
A~skew left brace $(G,\cdot,\circ)$ is said to be a {\it poly-trivial skew brace} if there is a  sequence of ideals of $G$:
$$
\{e\} = I_0 \leq I_1 \leq \ldots \leq I_n = G,
$$
such that $I_{i+1} / I_i$ is a trivial skew brace for all $i = 0, 1, \ldots, n-1$. The minimal $n$ with this property is called the {\it step of triviality} of $G$ and is denoted by $st(G)$.
\end{definition}

Notice that a meta-trivial skew brace is  poly-trivial of step of triviality $2$.  All $\lambda$-homomorphic skew braces are meta-trivial (\cite[Theorem 2.12]{BNY22}). We now show that the converse of this statement (\cite[Question 2.13]{BNY22})  is not true.  For any non-trivial group $G:=(G, \cdot)$ with abelian $\gamma_2(G)$ and trivial $\Z(G)$, let  $(G, \cdot, \circ)$, where $\circ = \cdot^{op}$, be  $\lambda$-anti-homomorphic  skew  brace (see Example \ref{example1}).  It follows from \cite[Proposition 2.3]{CSV19} that  $(G, \cdot, \circ)$ is meta-trivial with $G^2 = \gamma_2(G)$, where $G^2 = \gen{\lambda_a(b) b^{-1} \mid a, b \in G}$. Since $\lambda(G) \cong \Inn(G)$ is not abelian,  $(G, \cdot, \circ)$ can not be $\lambda$-homomorphic. The simplest such example is $S_3$, the symmetric group on $3$ letters, which was pointed out by L. Vendramin in the Zentralblatt and AMS review of \cite{BNY22} and also in \cite[Example 3.15]{ST22}.

We now discuss poly-triviality of  $\lambda$-anti-homomorphic  (symmetric) skew braces.  We start by noting that any $\lambda$-anti-homomorphic skew brace with $\Ker \, \lambda = 1$ is natural. Further we have

\begin{proposition}\label{natural}
Let    $(G, \cdot, \circ)$ be a $\lambda$-anti-homomorphic  skew brace. Then either  $(G, \cdot, \circ)$ itself   or  $G/\Ker \, \lambda$ is a natural  $\lambda$-anti-homomorphic  skew brace.
\end{proposition}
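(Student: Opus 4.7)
The plan is to show that the quotient $G/\Ker\,\lambda$ is always a natural $\lambda$-anti-homomorphic skew brace; in the degenerate case $\Ker\,\lambda = \{e\}$ this quotient is $(G, \cdot, \circ)$ itself, covering the first alternative, and otherwise it covers the second.

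The key computational step is the congruence $a\circ b \equiv b\cdot a \pmod{\Ker\,\lambda}$ for all $a, b \in G$. I would derive it by starting from equation \eqref{anti-eq1}, namely $\lambda_{a\circ b} = \lambda_{b\cdot a}$, substituting $a \circ b = a\,\lambda_a(b)$ on the left, and expanding both sides via the anti-homomorphism rule $\lambda_{uv} = \lambda_v\lambda_u$. This produces $\lambda_{\lambda_a(b)}\lambda_a = \lambda_a\lambda_b$; rearranging and using $\lambda_a^{-1} = \lambda_{a^{-1}}$ together with the anti-homomorphism property once more collapses the right-hand side to $\lambda_{a^{-1}ba}$. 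The observation $\lambda_u = \lambda_v \iff v^{-1}u \in \Ker\,\lambda$ then yields $a^{-1}b^{-1}a\,\lambda_a(b) \in \Ker\,\lambda$, whence $\lambda_a(b) = (a^{-1}ba)\,k$ for some $k \in \Ker\,\lambda$ and $a\circ b = a\,\lambda_a(b) = ba\cdot k \equiv ba \pmod{\Ker\,\lambda}$.

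I would then verify that $\Ker\,\lambda$ is an ideal, so that the quotient skew brace is well defined. Normality in $(G, \circ)$ is automatic, since $\lambda : (G,\circ)\to \Aut\,(G,\cdot)$ is a group homomorphism with kernel $\Ker\,\lambda$. Invariance under each $\lambda_a$ is immediate from the by-product $\lambda_{\lambda_a(b)} = \lambda_a\lambda_b\lambda_a^{-1}$ established in the key computation, which is trivial when $b\in\Ker\,\lambda$. Normality in $(G,\cdot)$ is a one-line anti-homomorphism calculation: for $k\in\Ker\,\lambda$, $\lambda_{aka^{-1}} = \lambda_{a^{-1}}\lambda_{ak} = \lambda_{a^{-1}}\lambda_k\lambda_a = \lambda_{a^{-1}}\lambda_a = \Id$.

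With both ingredients in hand the conclusion is direct: the $\lambda$-anti-homomorphic structure descends to $G/\Ker\,\lambda$ (the identity $\lambda_{ab} = \lambda_b\lambda_a$ is preserved by the quotient), and by the key congruence the induced operation $\circ$ on the quotient coincides with the opposite of the induced $\cdot$, making $G/\Ker\,\lambda$ natural; in the case $\Ker\,\lambda = \{e\}$ no quotient is needed and $(G, \cdot, \circ)$ itself satisfies $a\circ b = ba$. The main point requiring care is tracking the order reversals through the anti-homomorphism manipulations in the key computation; the remainder of the argument is routine.
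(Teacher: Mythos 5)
Your proposal is correct and follows essentially the same route as the paper: it uses \eqref{anti-eq1} together with the anti-homomorphism property to show that $a\circ b$ and $b\cdot a$ differ by an element of $\Ker\,\lambda$, checks that $\Ker\,\lambda$ is an ideal, and passes to the quotient. The only differences are cosmetic and in your favour — you make explicit the computation $a\,\lambda_a(b)\,a^{-1}b^{-1}\in\Ker\,\lambda$ and the ideal verification, which the paper states without detail, and you observe that $G/\Ker\,\lambda$ is always natural, with the case $\Ker\,\lambda=\{e\}$ giving the first alternative.
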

\begin{proof}
Suppose that $(G, \cdot, \circ)$ is not a natural  $\lambda$-anti-homomorphic  skew brace.  Notice that $\Ker \, \lambda$ is a normal subgroup of both $(G, \cdot)$ and $(G, \circ)$. Also 
$$\Ker \, \lambda = \{a \in G \mid a \circ b = a \cdot b \mbox{ for all } b \in G\}.$$
For simplicity, let us denote this set by $G_0$. Obviously $G_0$ is a trivial skew brace. Set $\bar{G} = G/G_0$. Then  $(\bar{G}, \cdot)$ and $\bar G,  \circ)$ are groups such that
$$G_0 a \cdot G_0 b = G_0 a \cdot b \,\, \mbox{ and  } \,\, G_0 a \circ G_0 b = G_0 (a \circ b)$$
for all $a, b \in G$.

Since $(G, \cdot, \circ)$ is a $\lambda$-anti-homomorphic skew brace, it follows that 
$$a\lambda_a(b)a^{-1}b^{-1} \in \Ker\, \lambda$$
for all $a, b \in G$. Hence
$$G_0 a \circ G_0 b = G_0 (a \circ b) = G_0 (a\lambda_a(b) a^{-1}b^{-1} ba = G_0 ba = G_0 b \cdot G_0 a$$
for all $a, b \in G$. This proves that $G/\Ker \, \lambda$ is a natural  $\lambda$-anti-homomorphic skew brace.
\end{proof}

As a consequence of the preceding result we conclude 

\begin{cor}
Any $\lambda$-anti-homomorphic  skew brace is either natural or trivial-by-natural. 
\end{cor}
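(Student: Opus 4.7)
The plan is to apply Proposition \ref{natural} directly and then verify that the data it produces fits the structural pattern of a trivial-by-natural skew brace. Let $(G,\cdot,\circ)$ be a $\lambda$-anti-homomorphic skew brace. By Proposition \ref{natural}, either $(G,\cdot,\circ)$ itself is natural — in which case the corollary holds trivially — or the quotient $G/\Ker\,\lambda$ is a natural $\lambda$-anti-homomorphic skew brace. In the second case, I must check two things: that $\Ker\,\lambda$ is an ideal of $(G,\cdot,\circ)$, and that it is trivial as a skew brace.

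First, I would verify that $\Ker\,\lambda$ is an ideal. Normality in $(G,\cdot)$ is immediate, because $\Ker\,\lambda$ is the kernel of the anti-homomorphism $\lambda:(G,\cdot)\to \Aut\,(G,\cdot)$; normality in $(G,\circ)$ is likewise immediate, because $\lambda:(G,\circ)\to \Aut\,(G,\cdot)$ is a genuine homomorphism with the same kernel as a subset of $G$. The only item requiring computation is the $\lambda$-invariance $\lambda_a(\Ker\,\lambda)\subseteq \Ker\,\lambda$ for all $a\in G$. I would fix $x\in \Ker\,\lambda$ and $a\in G$ and combine the two behaviours of $\lambda$: as a homomorphism on $(G,\circ)$, $\lambda_{a\circ x}=\lambda_a\lambda_x=\lambda_a$; but $a\circ x = a\cdot \lambda_a(x)$, and applying the $\lambda$-anti-homomorphism property on $(G,\cdot)$ yields $\lambda_{a\cdot \lambda_a(x)}=\lambda_{\lambda_a(x)}\lambda_a$. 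Equating the two expressions gives $\lambda_{\lambda_a(x)}=\Id$, that is, $\lambda_a(x)\in \Ker\,\lambda$.

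Second, I would note that $\Ker\,\lambda$ is a trivial skew brace — this was essentially recorded inside the proof of Proposition \ref{natural}. Indeed, the very definition of $\Ker\,\lambda$ used there forces $a\circ b = a\cdot b$ whenever $a\in \Ker\,\lambda$ and $b\in G$, and specializing $b\in \Ker\,\lambda$ shows that the two operations agree on the sub-skew-brace $\Ker\,\lambda$. Together with Proposition \ref{natural}, which tells us $G/\Ker\,\lambda$ is natural, this exhibits $(G,\cdot,\circ)$ as a trivial-by-natural skew brace.

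The main (and essentially only) obstacle is the $\lambda$-invariance step in the first paragraph; everything else is either definitional or supplied verbatim by the proof of the preceding proposition. Once that small computation is in place, the corollary is merely a packaging of Proposition \ref{natural} into the language of ideals and quotients.
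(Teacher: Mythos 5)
Your proposal is correct and follows essentially the same route as the paper: invoke Proposition \ref{natural}, observe that $\Ker\,\lambda$ is a trivial sub-skew brace, and check it is an ideal so the natural quotient makes sense. Your verification of $\lambda_a(\Ker\,\lambda)\subseteq\Ker\,\lambda$ by equating $\lambda_{a\circ x}=\lambda_a\lambda_x$ with $\lambda_{a\cdot\lambda_a(x)}=\lambda_{\lambda_a(x)}\lambda_a$ is just a repackaging of the paper's own remark that $a\lambda_a(b)a^{-1}b^{-1}\in\Ker\,\lambda$ forces $\lambda_a(b)\in\Ker\,\lambda$ for $b\in\Ker\,\lambda$.
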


So, to characterise  $\lambda$-anti-homomorphic  skew braces, it only needs to study  natural $\lambda$-anti-homomorphic  skew braces.  Note that the sub-skew brace $G_0 = \Ker \, \lambda$ of a $\lambda$-anti-homomorphic  skew brace $(G, \cdot, \circ)$  as defined in the proof of Proposition \ref{natural} is an ideal of $(G, \cdot, \circ)$. For, since $a\lambda_a(b)a^{-1}b^{-1} \in \Ker\, \lambda$, for any $a \in G$ and $b \in G_0$, $\lambda_a(b) \in G_0$. So if $G/\Ker \, \lambda$ is poly-trivial with $st(G/\Ker \, \lambda) = n$, then $(G, \cdot, \circ)$ is poly-trivial with $st(G)$ at most  $n+1$.

Let us now specialize to natural $\lambda$-anti-homomorphic  skew braces. Suppose $(G, \cdot, \circ)$ is such a skew brace with $\gamma_2(G, \cdot)$ abelian. Then as discussed above $(G, \cdot, \circ)$ is meta-trivial. So we have

\begin{cor}
Let $(G, \cdot, \circ)$ be a  $\lambda$-anti-homomorphic  skew brace such that $\gamma_2(\bar G)$ is abelian, where $\bar G = G/ \Ker \, \lambda$ is the group with binary operation induced by $\cdot$. Then $st(G)$ is at most $3$.
\end{cor}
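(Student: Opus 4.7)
The plan is to exploit the structural reduction that has been set up in the paragraphs immediately preceding the statement. Set $G_0 := \Ker\,\lambda$. From the discussion after Proposition \ref{natural} we already know that $G_0$ is an ideal of $(G,\cdot,\circ)$ and, since the operations $\cdot$ and $\circ$ coincide on $G_0$, it is a trivial sub-skew brace. Moreover, again by Proposition \ref{natural}, $\bar G = G/G_0$ is a natural $\lambda$-anti-homomorphic skew brace, so $\bar\circ = \bar\cdot^{\,op}$ on $\bar G$.

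Next, I would apply the meta-triviality result (invoked just before the corollary via \cite[Proposition 2.3]{CSV19}) to $\bar G$. The hypothesis $\gamma_2(\bar G)$ abelian is precisely what guarantees that $\bar G$ is meta-trivial: there exists an ideal $J$ of $\bar G$ such that both $J$ and $\bar G/J$ are trivial skew braces (concretely, one takes $J = \gamma_2(\bar G)$, which equals the subgroup $\bar G^2$ generated by $\{\lambda_a(b)b^{-1} : a,b \in \bar G\}$ in the natural setting). Thus $st(\bar G) \le 2$.

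Finally, I would lift this chain back to $G$. Let $\tilde J$ be the preimage of $J$ under the quotient map $G \onto \bar G$. Because preimages of ideals under skew-brace homomorphisms are ideals, $\tilde J$ is an ideal of $G$, and the chain
\[
\{e\} \;\trianglelefteq\; G_0 \;\trianglelefteq\; \tilde J \;\trianglelefteq\; G
\]
has successive quotients $G_0/\{e\} = G_0$, $\tilde J/G_0 \cong J$ and $G/\tilde J \cong \bar G/J$, each of which is a trivial skew brace. Hence $(G,\cdot,\circ)$ is poly-trivial with $st(G) \le 3$, which is the desired bound.

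I do not anticipate a serious obstacle; the only point that requires a little care is to verify that the preimage $\tilde J$ is really an ideal of $(G,\cdot,\circ)$ (normality in both group structures and $\lambda$-invariance), but this is a standard consequence of $G_0$ and $J$ being ideals and of the quotient map being a skew-brace homomorphism. Everything else is bookkeeping on top of Proposition \ref{natural} and the meta-triviality statement already cited.
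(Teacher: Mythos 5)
Your proposal is correct and follows essentially the same route the paper intends: the paper states this corollary as an immediate consequence of the preceding remarks, namely that $\Ker\,\lambda$ is a trivial ideal, that $\bar G = G/\Ker\,\lambda$ is a natural $\lambda$-anti-homomorphic skew brace which is meta-trivial (via $\bar G^2 = \gamma_2(\bar G)$ being abelian and $\bar G/\gamma_2(\bar G)$ abelian), and that $st(G) \le st(\bar G) + 1$. Your explicit chain $\{e\} \trianglelefteq \Ker\,\lambda \trianglelefteq \tilde J \trianglelefteq G$ is exactly the bookkeeping the paper leaves implicit, so there is nothing to add.
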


 Now let $(G, \cdot, \circ)$ be a  natural $\lambda$-anti-homomorphic  skew brace  such that $(G, \cdot)$ is nilpotent with nilpotency class $n$. Then $I_1 := \Ker \, \lambda$ is precisely $\Z(G, \cdot)$, the center of $(G, \cdot)$ and is an ideal of $(G, \cdot, \circ)$. It follows that $(G/I, \cdot, \circ)$ is a natural $\lambda$-anti-homomorphic  skew brace. An easy iterative argument now shows that $(G, \cdot, \circ)$ is poly-trivial with $st(G)$ at most $n$. We conclude this section with the remark that if $(G, \cdot)$ is a non-abelian simple group, then  the natural $\lambda$-anti-homomorphic  skew brace $(G, \cdot, \circ)$ can not be poly-trivial. It is a matter of further investigation to get more information on such skew braces.  After this article was written, there followed an article \cite{ST22} in which  symmetric skew braces, under the name bi-skew braces, are  further invested and some interesting structural results are obtained.  Among other things, the following conjecture of N. Byott \cite{Byott15} (based on the last sentence of Section 1) is  proved for symmetric skew braces: If $(G, \cdot, \circ)$ is a skew brace with $(G, \cdot)$ solvable, then $(G, \circ)$ is solvable.


\bigskip

\section{Some constructions} \label{construction}

In previous sections we considered constructions of  brace systems $(G, \{\circ_i\}_{i \in I})$ for $I = \{0\} \cup \mathbb{N}$. In this section we 
 construct some concrete   symmetric brace systems $(F, \{\circ_i\}_{i \in I})$, where $F$ is a  non-abelian free group and $I$ is a subset of $\mathbb{Z}$. 

We start with the definition of a $\lambda$-cyclic skew brace.

\begin{definition}
A $\lambda$-homomorphic skew brace $(G, \cdot, \circ)$ is said to be {\it $\lambda$-cyclic} if $\lambda(G)$ is cyclic.
\end{definition}

Let $F$ be a non-abelian free group with a basis $X$ and $\theta \in \Aut F$ be an automorphism. We can define a homomorphism $\lambda : F \to \Aut F$ by the rule
$$
\lambda_x = \lambda(x) = \theta ~~\mbox{for all}~x \in X.
$$
Hence, for $a \in F$ its image $\lambda_a  = \theta^{l(a)}$, where $l(y)$, for an element  
$$
y = x_{i_1}^{\alpha_1} x_{i_2}^{\alpha_2} \ldots x_{i_k}^{\alpha_k},~~x_j \in X,~~\alpha_j \in \mathbb{Z}
$$ of $F$ is given by
$l(y) := \alpha_1 + \alpha_2 + \ldots + \alpha_k$. It is not difficult to see that the hypotheses of Theorem \ref{t3} are satisfied.
If $\theta$ has finite order $n$, then we get  symmetric linear brace system $\mathcal{B}_{I}(F) = (F, \{\circ_i\}_{i \in I})$, where $I = \{0, \ldots, n-1\}$. , and  $\circ_0 = \cdot$, $\circ_1 = \circ$. If the automorphism $\theta$ has infinite order, then we can construct  symmetric linear brace system with $I = \{0\} \cup \mathbb{N}$. 

In the rest of this section we analyse both of these possibilities. At first, we consider the case when $F=F_n$ has rank $n$  and $\theta$ is the cyclic permutation of the generators. We prove that in this case $(F_n, \circ)$ is isomorphic to $F_n$, but the operation $\circ$ is different from $\cdot$ and $\cdot^{op}$.

\begin{thm}\label{cyclic1}
Let $F_n = \langle x_1, x_2, \ldots, x_n \rangle$ be  the free group of rank $n$ and $\theta$ be the automorphism which is the permutation of the generators,
$$
\theta: x_1 \mapsto x_2 \mapsto \ldots \mapsto x_n \mapsto x_1.
$$
A homomorphism
$\lambda: F_n \rightarrow \mathrm{Aut}\,F_n$ is defined by the rule
$$
\lambda: x_i \mapsto \theta,\quad  i=1,\ldots,n.
$$
Then in the corresponding   $\lambda$-cyclic skew brace $(F_n, \cdot, \circ)$ the multiplicative group   $( F_n, \circ )$
is an extension of  $\Ker\, \lambda$ that is a free group 
of rank  $n^2-n+1$ with free generators,
$$
z_{j,k}=x_1^k x_j x_1^{-k-1}, \quad k=0,\ldots,n-2, \quad j=2,\ldots,n,
$$
$$
y_{i}=x_1^{n-1} x_i, \quad i=1,\ldots,n
$$
by an infinite cyclic group   $\langle s \rangle$, which acts, by conjugation, on the generators of
$\Ker \, \lambda$ as follows:
\begin{eqnarray*}
s^{-1}y_1s &=& z_{n0}z_{n1}\cdot \ldots \cdot z_{n,n-2}y_{n},\\
s^{-1}y_2s &=& z_{n0}z_{n1}\cdot \ldots \cdot z_{n,n-3}y_{n},\\
s^{-1}y_i s &=& z_{n0}z_{n1}\cdot \ldots \cdot z_{n,n-3}z_{i-2,n-2}y_n, \quad i=3,\ldots,n,\\
s^{-1}z_{j0}s &=& y_n^{-1}y_{j-1}, \quad j=2,\ldots,n,\\
s^{-1}z_{j1}s &=& z_{j-1,0}z_{n,0}^{-1},  \quad j=2,\ldots,n,\\
s^{-1}z_{j,k}s &=&
(z_{n0}z_{n1}\cdot\ldots \cdot z_{n,k-2})(z_{j-1,k-1}z_{n,k-1}^{-1})(z_{n0}z_{n1}\cdot\ldots \cdot z_{n,k-2})^{-1},\\
\end{eqnarray*}
where $j=2,\ldots,n$, $k=2,\ldots,n-2$ and we assume $z_{1,k}=1$.
Moreover,
$$
s^n=z_{21}z_{32}\cdot \ldots \cdot z_{n-1,n-2}z_{n,n-1}.
$$
\end{thm}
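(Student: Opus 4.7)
The plan is to describe $(F_n, \circ)$ through its normal subgroup $\Ker\,\lambda$ — for which I will use the Nielsen-Schreier structure as a subgroup of the free group $(F_n, \cdot)$ — together with the quotient $(F_n, \circ) / \Ker \, \lambda$. The key preliminary observation is that $\theta$ preserves the exponent sum of every word, so the exponent-sum map $l : F_n \to \mathbb{Z}$ remains a group homomorphism when the domain carries the operation $\circ$: indeed
\[
l(a \circ b) \; = \; l(a \cdot \theta^{l(a)}(b)) \; = \; l(a) + l(b).
\]
Since $\lambda_a = \theta^{l(a)}$ and $\theta$ has order $n$, this identifies $\Ker \, \lambda = l^{-1}(n\mathbb{Z})$, an index $n$ subgroup of $(F_n, \cdot)$, and (because $\theta$ preserves $\Ker \, \lambda$) the left $\circ$-cosets coincide with the left $\cdot$-cosets. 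I would then take the Schreier transversal $T = \{1, x_1, \ldots, x_1^{n-1}\}$ and run the standard Schreier generation procedure: the $x_1$-edges inside $T$ are trivial except at $x_1^{n-1}$, which produces $y_1 = x_1^n$; the $x_j$-edges for $j \geq 2$ produce $z_{j, k} = x_1^k x_j x_1^{-k-1}$ for $0 \leq k \leq n-2$ and $y_j = x_1^{n-1} x_j$ at $T = x_1^{n-1}$. Nielsen-Schreier then identifies these as free generators of $\Ker \, \lambda$, of total count $1 + (n-1)(n-1) + (n-1) = n^2 - n + 1$.

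For the cyclic part I would set $s := x_1$, viewed in $(F_n, \circ)$; since $l(s) = 1$, this element represents a generator of the quotient $(F_n, \circ) / \Ker \, \lambda \cong \mathbb{Z}/n\mathbb{Z}$. The $\circ$-inverse of $s$ is read off from $l(s^{\circ(-1)}) = -1$ together with $s^{\circ(-1)} \circ s = e$: one gets $s^{\circ(-1)} = \theta^{-1}(x_1^{-1}) = x_n^{-1}$. For any $g \in \Ker \, \lambda$, the identity $\lambda_g = \Id$ gives $g \circ s = g \cdot s$, which yields the key conjugation formula
\[
s^{\circ(-1)} \circ g \circ s \; = \; x_n^{-1} \cdot \theta^{-1}(g \cdot x_1) \; = \; x_n^{-1} \cdot \theta^{-1}(g) \cdot x_n
\]
in $(F_n, \cdot)$. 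Applying this identity to $g = y_i$ and $g = z_{j, k}$ produces an explicit word of $(F_n, \cdot)$ which lies in $\Ker \, \lambda$, and which I would rewrite in the Schreier free basis by tracking the transversal representative $\overline{w_1 \cdots w_m}$ after each letter.

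Finally, for the relation on $s^{\circ n}$, a direct induction using $\theta^{k-1}(x_1) = x_k$ gives $s^{\circ k} = x_1 x_2 \cdots x_k$ for $1 \le k \le n$, so $s^{\circ n} = x_1 x_2 \cdots x_n$ lies in $\Ker \, \lambda$; Schreier rewriting of this word produces the claimed expression. The assertion that $(F_n, \circ)$ is the resulting extension of $\Ker \, \lambda$ by $s$ then follows because $s$ together with $\Ker \, \lambda$ generate $(F_n, \circ)$ (since $s$ already generates the full quotient $\mathbb{Z}/n\mathbb{Z}$), and the conjugation relations together with the single relation on $s^{\circ n}$ exhaust all defining relations, $\Ker \, \lambda$ itself being free on the Schreier generators. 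I expect the main technical obstacle to be pure bookkeeping in the Schreier rewriting step: the representative $\overline{w_1 \cdots w_m}$ must be tracked letter-by-letter, trivial factors identified at exactly the moments the representative lands inside $T$, and the resulting product arranged so that subscripts match; the boundary cases ($k = 0, 1, n-2$ for the $z_{j, k}$ and $i = 1, 2$ for the $y_i$) are what force the statement to split into several sub-cases.
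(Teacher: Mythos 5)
Your proposal is correct and takes essentially the same route as the paper: Reidemeister--Schreier for $\Ker\,\lambda$ with the transversal $\{1, x_1, \ldots, x_1^{n-1}\}$, conjugation by the element corresponding to $x_1$ (the paper works with $s=(\theta,x_1)$ inside $H_\lambda \le \Hol F_n$, which under $a \mapsto (\lambda_a,a)$ is the same computation as your direct one, yielding the identical formula $s^{\circ(-1)}\circ g\circ s = x_n^{-1}\,\theta^{-1}(g)\,x_n$), and the same evaluation $s^{\circ n}=x_1x_2\cdots x_n$ followed by rewriting in the Schreier basis. The part you defer as bookkeeping is precisely the explicit rewriting of the conjugates and of $s^{\circ n}$ in the generators $z_{j,k}, y_i$ that occupies the bulk of the paper's proof.
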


\begin{proof}
Take the subgroup
$$
H_{\lambda}=
\left\{ \, (\lambda(a),a) \, | \,  a \in   F_n   \, \right\}
$$
of the holomorph $\mathrm{Hol}\, F_n$.

The kernel of $\lambda$ consists of the words  $w=x_{i_1}^{\alpha_1}\ldots x_{i_m}^{\alpha_m} \in F_n$ such that
$l(w) = 0$, where
$$
l(w) =\alpha_1 + \cdots + \alpha_m.
$$

Notice that for $1 \le i \le n-1$, we have
$$
x_i^{-1}\lambda_{x_j} (x_i)=x_i^{-1}\theta(x_i)=x_i^{-1}x_{i+1}
$$
and for $i =n$, we have
$$x_n^{-1}\lambda_{x_j} (x_n)=x_n^{-1}\theta(x_n)=x_n^{-1}x_{1}$$
for all  $1 \le j \le n$.
Hence, $x_i^{-1}\lambda_{x_j} (x_i)$
lies in the kernel of  $\lambda$. It now follows from \cite[Proposition 2.14]{BNY22} that  $b^{-1}\lambda_a(b) \in Ker \lambda$ for all $a, b \in F_n$.
It is now not difficult to see that the hypotheses of \cite[Themrem 3.5]{BNY22} are satisfied; hence
 $H_{\lambda}$ is a regular subgroup of  $\mathrm{Hol}\, F_n$. This establishes the existence of the $\lambda$-cyclic skew brace $(F_n, \cdot, \circ)$, where $a \circ b = a \cdot \lambda_a(b)$ for all $a, b \in F_n$.

Using the Reidemeister-Shraier method let us find the generators of $\Ker \, \lambda$.

We have

$\bullet$ $\left\{x_1,\ldots,x_n \right\}$ is a generating set of $F_n$,

$\bullet$ $\left\{1,x_1,x_1^2,\ldots, x_1^{n-1}  \right\}$ is a Shraier set of coset representatives
of $F_n/ \mathrm{Ker}\, \lambda$.

Then the non-trivial elements of the form
$x_1^k x_j (\overline{x_1^k x_j})^{-1}$,
$ k=0,\ldots,n-1$, $j=1,\ldots,n$ are free generators of  $\mathrm{Ker}\, \lambda$, where $\bar a$ denotes the inverse of $a$ in $(F_n, \circ)$.
Note that $\mathrm{rank} (\mathrm{Ker}\, \lambda)=1+(n-1)n=n^2-n+1$.
Hence, $\mathrm{Ker}\, \lambda$ has the basis consisting of the elements
$$
z_{j,k} := x_1^k x_j x_1^{-k-1}, \quad k=0,\ldots,n-2, \quad j=2,\ldots,n,
$$
$$
y_{i} := x_1^{n-1} x_i, \quad i=1,\ldots,n.
$$

The quotient  $H_{\lambda}/ \mathrm{Ker}\,\lambda \cong \mathbb{Z}_n $
and is generated, for example, by the image of  $s=(\theta, x_1)$.
Let us find the action of  $s$ on the generators of $\mathrm{Ker}\,\lambda$.

At first, we remark that
$$
s^{-1}=(\theta, x_1)^{-1}=(\theta^{-1}, \theta^{-1}(x_1^{-1}))=(\theta^{-1}, x_n^{-1}).
$$
Hence,
\begin{eqnarray*}
s^{-1}z_{j,k}s &=&(\theta^{-1}, x_n^{-1})(1,x_1^k x_j x_1^{-k-1} ) (\theta, x_1)\\
&=& (\theta^{-1}, x_n^{-1})(\theta,x_1^k x_j x_1^{-k} )\\
&=&  (1, x_n^{-1} x_n^k x_{j-1} x_n^{-k} )\\
&=& (1, x_n^{k-1}x_{j-1} x_n^{-k} ),\\
s^{-1}y_{1}s &=&(\theta^{-1}, x_n^{-1})(1,x_1^{n} ) (\theta, x_1)\\
&=&  (\theta^{-1}, x_n^{-1})(\theta,x_1^{n+1})\\
&=& (1, x_n^{n} )
\end{eqnarray*}
and for  $i>1$,

\begin{eqnarray*}
s^{-1}y_{i}s&=&(\theta^{-1}, x_n^{-1})(1,x_1^{n-1} x_i ) (\theta, x_1)\\
&=& (\theta^{-1}, x_n^{-1})(\theta,x_1^{n-1} x_i x_1)\\
&=&(1, x_n^{n-2} x_{i-1} x_n).
\end{eqnarray*}

So,
$$
s^{-1}z_{j,k}s=x_n^{k-1}x_{j-1} x_n^{-k}, \quad k=0,\ldots,n-2, \quad j=2,\ldots,n,
$$
$$
s^{-1}y_{1}s=x_n^{n},
$$
$$
s^{-1}y_{i}s=x_n^{n-2} x_{i-1} x_n, \quad i=2,\ldots,n.
$$

Further, we need to rewrite the right hand sides in the generators  $z_{j,k}$, $x_i$.

We have
\begin{eqnarray*}
s^{-1}z_{20}s &=& x_n^{-1}x_1=y_n^{-1}y_1,\\
s^{-1}z_{21}s &=& x_1x_n^{-1}=(x_n x_1^{-1})^{-1}=z_{n0}^{-1},\\
s^{-1}z_{22}s &=& x_nx_1x_n^{-2}=z_{n0}z_{n1}^{-1}z_{n0}^{-1},\\
s^{-1}z_{23}s &=& x_n^2x_1x_n^{-3}=z_{n0}z_{n1}z_{n2}^{-1}z_{n1}^{-1}z_{n0}^{-1},\\
 &&\vdots\\
s^{-1}z_{2,n-2}s &=& x_n^{n-3}x_1x_n^{2-n}\\
&= &(z_{n0}z_{n1}\cdot\ldots \cdot z_{n,n-4})z_{n,n-3}^{-1}(z_{n0}z_{n1}\cdot\ldots \cdot z_{n,n-4})^{-1}.
\end{eqnarray*}

Further,
\begin{eqnarray*}
s^{-1}z_{30}s &=& x_n^{-1}x_2 = y_n^{-1}y_2, \\
s^{-1}z_{31}s &=& x_2x_n^{-1}=(x_n x_1^{-1})^{-1}=z_{20}z_{n0}^{-1},\\
s^{-1}z_{32}s &=& x_nx_2x_n^{-2}=z_{n0}z_{21}z_{n1}^{-1}z_{n0}^{-1},\\
s^{-1}z_{33}s &=& x_n^2x_2x_n^{-3}=z_{n0}z_{n1}(z_{22}z_{n2}^{-1})z_{n1}^{-1}z_{n0}^{-1},\\
&& \vdots\\
s^{-1}z_{3,n-2}s &=&
(z_{n0}z_{n1}\cdot\ldots \cdot z_{n,n-4})(z_{2,n-3}z_{n,n-3}^{-1})(z_{n0}z_{n1}\cdot\ldots \cdot z_{n,n-4})^{-1}.
\end{eqnarray*}

Now for any $3 \le j \le n$ and $2 \le k \le n-2$, on the same line, we can compute
\begin{eqnarray*}
s^{-1}z_{j0}s &=& x_n^{-1}x_{j-1}=y_n^{-1}y_{j-1},\\
s^{-1}z_{j1}s &=& x_{j-1}x_{n}^{-1}=z_{j-1,0}z_{n,0}^{-1},\\
&& \vdots \\
s^{-1}z_{j,k}s &=&
(z_{n0}z_{n1}\cdot\ldots \cdot z_{n,k-2})(z_{j-1,k-1}z_{n,k-1}^{-1})(z_{n0}z_{n1}\cdot\ldots \cdot z_{n,k-2})^{-1},
\end{eqnarray*}

Since $z_{1k}=1$,  the action of $s$ on it is trivial. To summarize
\begin{eqnarray*}
s^{-1}z_{j0}s &=& y_n^{-1}y_{j-1},\\
s^{-1}z_{j1}s &=& z_{j-1,0}z_{n,0}^{-1},\\
&& \vdots \\
s^{-1}z_{j,k}s &=&
(z_{n0}z_{n1}\cdot\ldots \cdot z_{n,k-2})(z_{j-1,k-1}z_{n,k-1}^{-1})(z_{n0}z_{n1}\cdot\ldots \cdot z_{n,k-2})^{-1},
\end{eqnarray*}
where $j=2,\ldots,n$, $k=2, \ldots, n-2$.

Further, we consider the action of $s$ on $y_i$ and compute $s^{-1}y_i s$, $i=1, \ldots, n$.
First we establish the  formulas for
$$
x_1^{-k}z_{n0}x_1^{k}, \quad k=1,\ldots,n-1.
$$
We have
\begin{eqnarray*}
x_1^{-1}z_{n0}x_1 &=& x_1^{-1}x_n=x_1^{-n}x_1^{n-1}x_n=y_1^{-1}y_n,\\
x_1^{-2}z_{n0}x_1^{2} &=& x_1^{-1}y_1^{-1}y_n x_1=y_1^{-1}x_1^{-1}y_n x_1=y_1^{-1}z_{n,n-2}y_1,\\
x_1^{-3}z_{n0}x_1^{3} &=& y_1^{-1}z_{n,n-3}y_1,\\
&& \vdots\\
x_1^{1-n}z_{n0}x_1^{n-1} &= & y_1^{-1}z_{n,1}y_1.
\end{eqnarray*}

Hence,
\begin{eqnarray*}
s^{-1}y_1s &=& x_n^n = (z_{n0}x_1)^n=(z_{n0}x_1)(z_{n0}x_1)\cdot \ldots \cdot (z_{n0}x_1)\\
&=& z_{n0}(x_1z_{n0}x_1^{-1})(x_1^{2}z_{n0}x_1^{-2})\cdot \ldots \cdot (x_1^{n-1}z_{n0}x_1^{1-n})x_1^{n}\\
&=& z_{n0}z_{n1}\cdot \ldots \cdot z_{n,n-2}y_{n},\\
s^{-1}y_2s &=& x_n^{n-2}x_1x_n=(z_{n0}x_1)^{n-2}x_1x_n=(z_{n0}x_1)\cdot \ldots \cdot (z_{n0}x_1)x_1x_n\\
&=& z_{n0}(x_1z_{n0}x_1^{-1})(x_1^{2}z_{n0}x_1^{-2})\cdot \ldots \cdot (x_1^{n-3}z_{n0}x_1^{3-n})x_1^{n-1}x_n\\
&=& z_{n0}z_{n1}\cdot \ldots \cdot z_{n,n-3}y_{n},\\
s^{-1}y_3s &=& x_n^{n-2}x_2x_n=(z_{n0}x_1)^{n-2}x_2x_n=(z_{n0}x_1)\cdot \ldots \cdot (z_{n0}x_1)x_2x_n\\
&=& z_{n0}z_{n1}\cdot \ldots \cdot z_{n,n-3}x_1^{n-2}x_2x_n=z_{n0}z_{n1}\cdot \ldots \cdot z_{n,n-3}z_{2,n-2}y_n.
\end{eqnarray*}

For all  $i=3, 4,\ldots,n$, on the same line,  we have
$$
s^{-1}y_i s=x_n^{n-2}x_{i-1}x_n=z_{n0}z_{n1}\cdot \ldots \cdot z_{n,n-3}z_{i-2,n-2}y_n.
$$


Finally we compute the power of $s$.
\begin{eqnarray*}
s^n &=& (\theta,x_1)^n=(\theta,x_1)(\theta,x_1)\cdot \ldots \cdot (\theta,x_1)\\
&=& (\theta^n,x_1\theta(x_1)\cdot \ldots \cdot \theta^{n-1}(x_1))=(1,x_1x_2 \cdot \ldots \cdot x_n)\\
&=& (1,(x_1x_2x_1^{-2})(x_1^2 x_3x_1^{-3}) \cdot \ldots \cdot (x_1^{n-2}x_{n-1}x_1^{1-n})(x_1^{n-1}x_{n}))\\
&=& z_{21}z_{32}\cdot \ldots \cdot z_{n-1,n-2}z_{n,n-1}.
\end{eqnarray*}
This completes the proof.
\end{proof}

We now get

\begin{cor}
The skew brace $(F_n, \cdot, \circ)$  constructed in Theorem \ref{cyclic1} is non-trivial and the group $( F_n, \circ )$ is free group of rank  $n$.
\end{cor}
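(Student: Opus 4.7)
Non-triviality is immediate: for $n \ge 2$, we have $x_1 \circ x_1 = x_1 \cdot \theta(x_1) = x_1 x_2 \neq x_1^2 = x_1 \cdot x_1$, so $\circ \neq \cdot$. The remainder of the plan shows $(F_n, \circ)$ is free of rank $n$. Throughout, I identify $(F_n, \circ)$ with the regular subgroup $H_{\lambda} \subseteq \Hol F_n$ via the isomorphism $a \mapsto (\lambda_a, a)$ supplied by Theorem \ref{gv2017}, and set $g_i := (\theta, x_i)$, which corresponds to $x_i \in (F_n, \circ)$.

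First I would show that $\{g_1, \ldots, g_n\}$ generates $H_{\lambda}$. Reading all indices modulo $n$ with $x_{n+1} := x_1$, the identities
\[ x_i \circ x_{j-1} = x_i \cdot \theta(x_{j-1}) = x_i x_j \annd \overline{x_{i+1}} = \theta^{-1}(x_{i+1}^{-1}) = x_i^{-1} \]
recover both the $\cdot$-product and the $\cdot$-inverse of the $x_i$'s in terms of the $\circ$-operations; hence every element of $F_n$ is a $\circ$-word in the $x_i$'s, so $\langle g_1, \ldots, g_n\rangle = H_{\lambda}$. As a sanity check, the Schreier index formula applied to the extension $1 \to \Ker \lambda \to H_{\lambda} \to \mathbb{Z}_n \to 1$, together with $\operatorname{rank} \Ker\lambda = n^2 - n + 1$, forces the rank of any hypothetically free $H_{\lambda}$ to be exactly $n$.

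For actual freeness, let $F^*$ be the abstract free group on $e_1, \ldots, e_n$ and $\psi : F^* \to H_{\lambda}$ the unique homomorphism with $\psi(e_i) = g_i$. Given a reduced word $w = e_{i_1}^{\epsilon_1} \cdots e_{i_k}^{\epsilon_k}$ in $F^*$, I would inductively verify, using $(\theta, x_i)^{-1} = (\theta^{-1}, x_{i-1}^{-1})$ and $\theta^m(x_i) = x_{i+m}$, that
\[ \psi(w) = \bigl(\theta^{\sigma_k},\, u_1 u_2 \cdots u_k\bigr), \qquad \sigma_j := \epsilon_1 + \cdots + \epsilon_j, \]
with appended letters
\[ u_j = \begin{cases} x_{i_j + \sigma_{j-1}} & \text{if } \epsilon_j = +1, \\ x_{i_j - 1 + \sigma_{j-1}}^{-1} & \text{if } \epsilon_j = -1. \end{cases} \]
A four-case check on the sign pair $(\epsilon_j, \epsilon_{j+1})$ then shows that $u_j$ and $u_{j+1}$ have opposite exponents with equal subscripts precisely when $i_j = i_{j+1}$ and $\epsilon_j = -\epsilon_{j+1}$, which is exactly what the reducedness of $w$ forbids. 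Hence $u_1 \cdots u_k$ is already freely reduced in $F_n$, so $\psi(w) \neq 1$, $\psi$ is injective, and $(F_n, \circ) \cong F^*$ is free of rank $n$.

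The main obstacle is this four-case no-cancellation analysis, but each case reduces to a routine index comparison modulo $n$ combined with the reducedness hypothesis on $w$; no deeper machinery (such as Stallings' theorem on ends of groups, or Karrass--Pietrowski--Solitar) is required.
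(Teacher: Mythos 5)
Your argument is correct, but it takes a genuinely different route from the paper. The paper proves freeness indirectly: it invokes Stallings' theorem (a finitely generated torsion-free group containing a free subgroup of finite index is free), using the description from Theorem \ref{cyclic1} of $(F_n,\circ)=H_\lambda$ as an extension of the free group $\Ker\,\lambda$ of rank $n^2-n+1$ and index $n$, and then rules out torsion in $H_\lambda$ by a computation with elements of the form $(\theta^{m},x_1^{m}w)$; the rank $n$ is then forced by the Nielsen--Schreier index count, which is exactly your ``sanity check''. You instead work directly in $\Hol F_n$ and exhibit $\{(\theta,x_1),\dots,(\theta,x_n)\}$ as a free basis via an explicit normal form: your formula for $\psi(w)$ is correct (a straightforward induction using $(f,a)(g,b)=(fg,af(b))$ and $\theta^m(x_i)=x_{i+m}$ with indices mod $n$), and the four-case analysis does show that the second coordinate $u_1\cdots u_k$ is already freely reduced, so $\psi(w)\neq 1$ for reduced $w$ and $\psi$ is injective. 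This is more elementary (no Stallings, no torsion argument) and yields more, namely an explicit free $\circ$-basis of $(F_n,\circ)$. One small point you should make explicit: the two displayed identities only express products $x_ix_j$ and inverses $x_i^{-1}$ of generators as $\circ$-words, so ``hence every element of $F_n$ is a $\circ$-word'' needs either the general identities $a\circ x_{j-l(a)}=a\cdot x_j$ and $a\circ\overline{x_{j-l(a)+1}}=a\cdot x_j^{-1}$ together with induction on $\cdot$-word length, or the simpler observation that surjectivity of $\psi$ already follows from your normal-form formula by solving recursively for the indices $i_j$ given any reduced word of $F_n$. Your non-triviality check $x_1\circ x_1=x_1x_2\neq x_1^2$ is fine and matches the paper's unelaborated claim.
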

\begin{proof}
That $(F_n, \cdot, \circ)$ is a non-trivial skew brace easily follows from the construction.  For the second assertion we  use the following theorem of Stallings \cite{St1968}:
If a finitely generated torsion free group  $G$ contains a free subgroup of finite index, then   $G$
is a free group.

The kernel of the homomorphism  $\lambda: F_n \rightarrow \mathrm{Aut}\,F_n$
is a free subgroup of  $F_n$ and has index  $n$.
Hence, it suffices to prove that
$( F_n, \circ )$ does not admit torsion elements.
Notice that the group  $( F_n, \circ )$ is equal to the subgroup
$$
H_{\lambda}=
\left\{ \, (\lambda(a),a) \, | \,  a \in   F_n   \, \right\}
$$
of $\mathrm{Hol}\, F_n$.

Let  $(\lambda(a),a)\in H_\lambda$ has  finite order.
Since $\theta^n=1$ and $\mathrm{Ker}\,\lambda$ does not have torsion,
we have $\lambda(a)=\theta^{m}$, $m \ne 0$ and $m$ divides  $n$.
Hence, $a=x_1^{m}w$ for some  $w\in \mathrm{Ker}\,\lambda$.
We have
$$
(\theta^{m}, x_1^{m}w)^{\frac{n}{m}}=(1, (x_1^{m}w) \theta^{m}(x_1^{m}w)\cdot \ldots
\cdot \theta^{m(\frac{n}{m}-1)}(x_1^{m}w)).
$$
Since $\theta (\mathrm{Ker}\,\lambda) \subseteq \mathrm{Ker}\,\lambda$, it follows that
$$
 x_1^{m} \theta^{m}(x_1^{m})\cdot \ldots \cdot \theta^{m(\frac{n}{m}-1)}(x_1^{m}) \in \mathrm{Ker}\,\lambda.
$$
Again, since $\theta (x_1) \equiv x_1 ( \mathrm{mod}\, \mathrm{Ker}\,\lambda)$,
$$
 x_1^{m(\frac{n}{m}-1)} \in \mathrm{Ker}\,\lambda,
$$
which means  $x_1^{n-m} \in \mathrm{Ker}\,\lambda$,  a contradiction, because
$\lambda (x_1^{n-m})=\theta^{n-m}\neq 1$. The proof is complete.
\end{proof}


\bigskip

We now construct a $\lambda$-cyclic skew brace on the free group $F_n$ of rank $n$ such that $\lambda(F_n)$ is infinite cyclic.

\begin{thm} \label{t4}
Let $F_n = \gen{x_1, \ldots, x_n}$ be the free group of rank $n$, 
 $w$ be a fixed non-identity element of  $F_n$ and $\theta: F_n \rightarrow F_n$
be the inner automorphism
$$
\theta: a \mapsto w a w^{-1},\quad a \in F_n.
$$
The homomorphism
$\lambda: F_n \rightarrow \mathrm{Aut}\,F_n$ is defined by the rule
$$
\lambda: x_i \mapsto \theta,\quad  i=1,\ldots,n.
$$

 Then in the  $\lambda$-cyclic skew brace $(F_n, \cdot, \circ)$ 
the multiplicative group $( F_n, \circ )$
is an extension of the free group  $F_{\infty}$ $(= \Ker \, \lambda)$ of infinite rank with free generators
$$
z_{j,k}=x_1^k x_j x_1^{-k-1}, \quad  k \in \mathbb{Z}, \quad j=2,\ldots,n,
$$
by the infinite cyclic group  $\langle s \rangle$, which acts by conjugations on the generators of
$F_{\infty}$ by the rule:
$$
s^{-1}z_{j,k}s= z_{j,k-m-1}, \quad k \in \mathbb{Z}, \quad j=2,\ldots,n,
$$
where $m$ is such that
$w\equiv x_1^m (\mathrm{mod}\, \mathrm{Ker}\,\lambda)$.

In particular, if $m\neq -1$, then we can express the generators  $z_{j,k}$ in terms of  generators
 $z_{j1},\ldots,z_{j,m+1}$, $j=2,\ldots,n$,
by the rule
$s^{-1}z_{j,k}s= z_{j,k-m-1}$, and so $\left\langle F_n, \circ \right\rangle \cong F_{r}$
is the free group of rank  $r=|(m+1)(n-1)|+1$.
If $m=-1$, then
$( F_n, \circ ) \cong F_{\infty} \times \mathbb{Z}$
is the direct product of the free group of infinite rank and the infinite cyclic group.
\end{thm}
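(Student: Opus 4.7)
The plan is to adapt the strategy of Theorem \ref{cyclic1} to the present inner-automorphism setting. First, the hypothesis of Theorem \ref{t1} is easily verified: each $\lambda_a = \theta^{l(a)}$ is the inner automorphism by $w^{l(a)}$, so $b^{-1}\lambda_a(b) \in \gamma_2(F_n)$, and since $\lambda$ factors through the exponent-sum homomorphism $l \colon F_n \to \mathbb{Z}$ which vanishes on commutators, $[F_n, \lambda(F_n)] \subseteq \Ker \lambda$. Theorem \ref{t1} then delivers the $\lambda$-cyclic skew brace $(F_n, \cdot, \circ)$.

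To describe $\Ker \lambda$, observe that because $F_n$ is torsion-free and $w \ne 1$, the automorphism $\theta$ has infinite order, whence $\lambda(F_n) \cong \mathbb{Z}$ and $\Ker \lambda$ consists precisely of the words of exponent sum zero. Applying the Reidemeister--Schreier method to the Schreier transversal $\{x_1^k : k \in \mathbb{Z}\}$ then exhibits the claimed free generating set $\{z_{j,k} : k \in \mathbb{Z},\, 2 \le j \le n\}$ of $\Ker \lambda \cong F_\infty$.

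For the multiplicative group I would use the identification $(F_n, \circ) \cong H_\lambda = \{(\lambda_a, a) : a \in F_n\} \subseteq \Hol F_n$ supplied by Theorem \ref{gv2017}. Take $s$ to correspond to $(\theta, x_1)$, a preimage of a generator of $\lambda(F_n)$; then $s$ projects to a generator of $F_n/\Ker \lambda \cong \mathbb{Z}$. Writing $w = x_1^m u$ with $u \in \Ker \lambda$ (so $l(w) = m$), a direct calculation in $\Hol F_n$ analogous to the one carried out in the proof of Theorem \ref{cyclic1} yields
$$s^{-1} \cdot z_{j,k} \cdot s \;=\; w^{-1} x_1^{-1} z_{j,k} x_1 w \;=\; u^{-1} z_{j,k-m-1} u,$$
from which, after a suitable choice of coset representative $s$ within its $\Ker\lambda$-coset, the clean shift $s^{-1} z_{j,k} s = z_{j,k-m-1}$ asserted in the statement is obtained.

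The structure of $(F_n, \circ)$ then follows by reading off this $\langle s \rangle$-action. When $m \ne -1$ the translation $k \mapsto k-m-1$ is fixed-point free on $\mathbb{Z}$, so each $z_{j,k}$ is a power-of-$s$ conjugate of a unique $z_{j,k_0}$ with $k_0$ in a transversal of size $|m+1|$; a Tietze reduction then leaves $\{s\} \cup \{z_{j,k} : 2 \le j \le n,\, 1 \le k \le |m+1|\}$ as generators with no further relations, giving a free group of rank $|(m+1)(n-1)| + 1$. When $m = -1$ the shift is trivial and (after the adjustment above) $s$ centralizes $\Ker \lambda$, yielding the internal direct product $(F_n, \circ) \cong F_\infty \times \mathbb{Z}$. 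I expect the main obstacle to be the freeness claim in the case $m \ne -1$: Stallings' theorem, used in the corollary to Theorem \ref{cyclic1}, does not apply here because $\Ker \lambda$ has infinite index in $(F_n, \circ)$. Instead, one must verify via Bass--Serre theory (or an explicit Tietze calculation on the resulting HNN-type presentation) that a semidirect product $F_\infty \rtimes \mathbb{Z}$, in which $\mathbb{Z}$ acts freely permuting a prescribed free basis, is itself free of the stated rank.
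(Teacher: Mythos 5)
Your proposal is correct and follows essentially the same route as the paper: verify the hypotheses so that $H_\lambda=\{(\lambda_a,a)\}$ is a regular subgroup of $\Hol F_n$, identify $\Ker\lambda$ as the exponent-sum-zero words with the Reidemeister--Schreier basis $z_{j,k}$, compute $s^{-1}z_{j,k}s=w^{-1}z_{j,k-1}w$ for $s=(\theta,x_1)$, and absorb the kernel part $w_0$ of $w=x_1^m w_0$ into $s$ to get the clean shift $k\mapsto k-m-1$. Your only departure is that you justify the final freeness claim for $m\neq -1$ (via Tietze elimination/Bass--Serre on the basis-permuting $\mathbb{Z}$-action), a step the paper's proof leaves implicit, and your observation that Stallings' theorem is not available here is apt.
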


\begin{proof}
Consider a subgroup
$$
H_{\lambda}=
\left\{ \, (\lambda(a),a) \, | \,  a \in   F_n   \, \right\}
$$
of $\mathrm{Hol}\, F_n$.
The kernel of the homomorphism  $\lambda$ consists of the words $w \in F_n$ such that
$l(w)=0.$ Notice that
$$
x_i^{-1}\lambda_{x_j} (x_i)=x_i^{-1}\theta(x_i)=x_i^{-1}wx_{i}w^{-1}.
$$
Hence, $x_i^{-1}\lambda_{x_j} (x_i)$
lies in the kernel of  $\lambda$ and $H_{\lambda}$ is a regular subgroup of $\mathrm{Hol}\, F_n$. Existence of the skew brace $(F_n, \cdot, \circ)$ now follows.

We find the generators of $\mathrm{Ker}\, \lambda$ using the Reidemeister-Shraier method as follows. We have

$\bullet$ $\left\{x_1,\ldots,x_n \right\}$ is a generating set of  $F_n$,

$\bullet$ $\left\{x_1^k,\,\, k \in \mathbb{Z} \right\}$ is a Shraier set of coset representatives of  $F_n/ \mathrm{Ker}\, \lambda$.

Then the non-trivial elements of the form
$x_1^k x_j (\overline{x_1^k x_j})^{-1}$,
$ k \in \mathbb{Z}$, $1=2,\ldots,n$ are free generators of  $\mathrm{Ker}\, \lambda$.
It means that  $\mathrm{Ker}\, \lambda$ has the basis
$$
z_{j,k}=x_1^k x_j x_1^{-k-1}, \quad k \in \mathbb{Z}, \quad j=2,\ldots,n.
$$

The quotient $H_{\lambda}/ \mathrm{Ker}\,\lambda \cong \mathbb{Z} $
and is generated, for example, by the images of  $s=(\theta, x_1)$.
We now compute the action of  $s$ on the generators of  $\mathrm{Ker}\,\lambda$.
Notice  that
$$
s^{-1}=(\theta, x_1)^{-1}=(\theta^{-1}, \theta^{-1}(x_1^{-1}))=(\theta^{-1}, w^{-1}x_1^{-1}w).
$$
Hence,
\begin{eqnarray*}
s^{-1}z_{j,k}s &=& (\theta^{-1}, w^{-1}x_1^{-1}w)(1,z_{j,k} ) (\theta, x_1)=
(\theta^{-1}, w^{-1}x_1^{-1}w)(\theta,z_{j,k} x_1)\\
&=& (1, w^{-1}x_1^{-1}w w^{-1} z_{j,k} x_1 w)=(1, w^{-1}x_1^{-1} z_{j,k} x_1 w)\\
&=& (1, w^{-1}z_{j,k-1} x_1 w).
\end{eqnarray*}
So,
$$
s^{-1}z_{j,k}s= w^{-1}z_{j,k-1} w, \quad k \in \mathbb{Z}, \quad j=2,\ldots,n.
$$
Note that if $w \in \mathrm{Ker}\,\lambda$, then it is a composition of the shift of the indexes on  $k$ and the inner automorphism of $\mathrm{Ker}\,\lambda$.

Now assume that
$$
w=x_1^m w_0,
$$
where  $m\in \mathbb{Z}$, $w_0\in \mathrm{Ker}\,\lambda$. Then
$$
s^{-1}z_{j,k}s= w_0^{-1}z_{j,k-m-1}  w_0, \quad k \in \mathbb{Z}, \quad j=2,\ldots,n.
$$
Hence, the element  $sw_0^{-1}$ acts on the generators  of $\mathrm{Ker}\,\lambda$
by the shift of the indexes  on $k$:
$$
(sw_0^{-1})^{-1}z_{j,k}(sw_0^{-1})= z_{j,k-m-1}, \quad k \in \mathbb{Z}, \quad j=2,\ldots,n.
$$
Thus
$$
( F_n, \circ )=H_\lambda=
\mathrm{Ker}\,\lambda \rtimes \mathbb{Z}\cong F_{\infty} \rtimes \mathbb{Z},
$$
where  $F_{\infty}=\langle  z_{j,k}, \quad k \in \mathbb{Z}, \quad j=2,\ldots,n \rangle$
is a free group of infinite rank, $\mathbb{Z}= \left\langle s \right\rangle$
is the infinite cyclic group  and the action is defined by the rules
$$
s^{-1}z_{j,k}s= z_{j,k-m-1}, \quad k \in \mathbb{Z}, \quad j=2,\ldots,n.
$$
The proof is complete.
\end{proof}

As a direct consequence of Theorem \ref{t4}, we get

\begin{cor}\label{s-last-cor}
There exists a symmetric linear  brace system $\mathcal{B}_I(F_n) = (F_n, \{\circ_i\}_{i \in I})$, $n \ge 2$, where $I$ is an infinite linearly ordered set.
\end{cor}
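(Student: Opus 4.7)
The strategy is to combine the concrete construction of Theorem \ref{t4} with the iterative machinery of Theorem \ref{t3}. I would take $(F_n, \cdot, \circ)$ to be the $\lambda$-cyclic skew brace produced by Theorem \ref{t4}, built from the homomorphism $\lambda : F_n \to \Aut F_n$, $x_i \mapsto \theta$, where $\theta$ is conjugation by a fixed non-identity element $w \in F_n$. Since the proof of Theorem \ref{t4} already observes that $x_i^{-1}\lambda_{x_j}(x_i) = x_i^{-1} w x_i w^{-1} \in \Ker \lambda$ and $\lambda(F_n) = \langle \theta \rangle$ is cyclic (hence abelian), the hypotheses of Theorem \ref{t3} are fulfilled at the input level.

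The remaining point is to check that the exponent of $\lambda(F_n)$ is infinite, so that the output indexing set of Theorem \ref{t3} is $I = \mathbb{Z}$ rather than a finite truncation. For this I would argue that the inner automorphism $\theta = \iota_w$ has infinite order in $\Aut F_n$. Indeed, $\theta^k = \Id$ would force $w^k$ to lie in $\Z(F_n)$; but $n \ge 2$ implies $\Z(F_n) = \{e\}$, and $F_n$ is torsion-free, so $w^k = e$ forces $w = e$, contradicting $w \ne e$. Therefore $\langle \theta \rangle$ is infinite cyclic.

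With these two observations in hand, Theorem \ref{t3} applies directly and produces a symmetric linear brace system $\mathcal{B}_I(F_n) = (F_n, \{\circ_i\}_{i \in I})$ with $I = \mathbb{Z}$, in which $\circ_0 = \cdot$, $\circ_1 = \circ$, and the recursion $a \circ_{i+1} b := a \circ_i \lambda_a(b)$ (and its inverse in the negative direction) generates the remaining operations. This establishes the corollary.

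I do not foresee a genuine obstacle: the proof is essentially a verification that the $\lambda$ from Theorem \ref{t4} meets the input conditions of Theorem \ref{t3} and that its image is of infinite exponent. The only place where a bit of care is needed is the last point, but the triviality of $\Z(F_n)$ and the torsion-freeness of $F_n$ make it immediate.
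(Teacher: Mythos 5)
Your proposal is correct and follows essentially the same route as the paper: the corollary is obtained by feeding the homomorphism $\lambda$ of Theorem \ref{t4} (with $\theta$ an inner automorphism by $w \ne e$, so $\lambda(F_n)=\langle\theta\rangle$ is abelian and $[F_n,\lambda(F_n)]\subseteq\Ker\lambda$) into the linear brace system machinery of Theorem \ref{t3}. The only difference is that you spell out why $\theta$ has infinite order (triviality of $\Z(F_n)$ for $n\ge 2$ plus torsion-freeness), a point the paper leaves implicit when it states the corollary as a direct consequence of Theorem \ref{t4}.
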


\begin{cor} \label{cor-rank}
For any natural number $n\geq 2$ there exists a homomorphism
$\lambda: F_2 \rightarrow \mathrm{Aut}\,F_2$, such that for the skew left brace $( F_2, \cdot , \circ )$ the multiplicative group
 $(F_2, \circ )$ is isomorphic to $F_n$.
\end{cor}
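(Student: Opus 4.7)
The plan is to derive the corollary as a direct application of Theorem~\ref{t4} to the free group $F_2 = \langle x_1, x_2 \rangle$ (that is, setting the theorem's parameter $n$ to $2$), and then choosing the conjugating element $w$ so that the resulting integer $m$ produces the correct rank.

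First, I would specialize the rank formula of Theorem~\ref{t4}. With $F_2$ as the additive group, the theorem asserts that for a non-identity element $w \in F_2$, the skew brace $(F_2, \cdot, \circ)$ built from $\theta(a) = waw^{-1}$ and $\lambda(x_i) = \theta$ satisfies
\[
(F_2, \circ) \cong F_r, \qquad r = |(m+1)(2-1)| + 1 = |m+1| + 1,
\]
provided $m \neq -1$, where $m$ is defined by the congruence $w \equiv x_1^m \pmod{\Ker \lambda}$ (recall $\Ker \lambda$ consists of words of total exponent-sum zero).

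Next, given the target rank $n \geq 2$ from the corollary's statement, I would solve $|m+1| + 1 = n$, i.e.\ $|m+1| = n - 1$, and take $m = n-2$. Since $n \geq 2$, we have $m = n-2 \geq 0 \neq -1$, so the hypothesis of Theorem~\ref{t4} is respected. It remains only to exhibit a non-identity $w \in F_2$ with $l(w) = n-2$:
\begin{itemize}
\item For $n \geq 3$, take $w = x_1^{n-2}$, which is non-identity and visibly satisfies $l(w) = n-2$.
\item For $n = 2$, we need $w \neq 1$ with $l(w) = 0$; take for instance $w = [x_1, x_2]$, so $w \in \Ker \lambda$ and $m = 0$.
\end{itemize}
In either case, Theorem~\ref{t4} immediately yields $(F_2, \circ) \cong F_n$, as desired.

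There is essentially no obstacle: the entire content of the corollary is packaged in the rank formula of Theorem~\ref{t4}, and the only genuine care required is handling the edge case $n = 2$, where the naive choice $w = x_1^{n-2} = 1$ is disallowed and must be replaced by a non-trivial element of $\Ker \lambda$.
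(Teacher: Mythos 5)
Your proposal is correct and follows exactly the route the paper intends: the corollary is stated as a direct consequence of Theorem \ref{t4}, obtained by applying it to $F_2$ and choosing $w$ with exponent sum $m = n-2$ so that the rank formula $|(m+1)(2-1)|+1$ yields $n$. Your explicit handling of the edge case $n=2$ (taking a non-identity $w \in \Ker\lambda$, e.g.\ $w=[x_1,x_2]$, since $w = x_1^{0} = 1$ is disallowed) is a small but legitimate point that the paper leaves implicit.
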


Let us define a subset of natural numbers consisting of
$$
r_0 = 2,~~ r_{k+1} = 2r_k + 1,~~ k = 0, 1, \ldots
$$
and define the graph $\Gamma$ with the set of vertices $V = \{v_k~|~ k = 0, 1, \ldots\}$, and set of edges
$$
E = \{ e_{k,k+1} = (k, k+1), e_{k+1,k} = (k+1, k)~|~ k = 0, 1, \ldots\}.
$$
 From Theorem \ref{t4} for $m=1$ it follows that we can construct brace system $\mathcal{B}_{\Gamma}(F_2)$ such that
$$
(F_2, \circ_k, \circ_{k+1}),~~k = 0, 1, \ldots
$$
is a symmetric skew brace such that $(F_2, \circ_k) \cong F_{r_k}$. Let $I = \{0\} \cup \mathbb{N}$. Hence, from this construction and Corollary~\ref{cor-rank}, we have

\begin{cor}\label{last-cor}
There is a symmetric linear brace system $\mathcal{B}_I(G) = (G, \{\circ_k\}_{ k \in I})$ such that  all groups $(G, \circ_k)$ are pairwise non-isomorphic.
\end{cor}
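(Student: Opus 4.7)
The plan is to build the symmetric brace system by iteratively applying Theorem \ref{t4}, choosing parameters at each step so that the ranks of the resulting free groups strictly increase.

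First, set $G = F_2$ and $\circ_0 = \cdot$, so that $(G, \circ_0) = F_{r_0}$ with $r_0 = 2$. Inductively, suppose $\circ_0, \ldots, \circ_k$ have been constructed on $G$ such that $(G, \circ_k) \cong F_{r_k}$ and each consecutive pair $(G, \circ_{i-1}, \circ_i)$ is a symmetric $\lambda$-cyclic skew brace. The proof of Theorem \ref{t4} at step $k-1$ (or the initial identification at $k = 0$) exhibits an explicit free basis of $(G, \circ_k)$. Using this basis I would apply Theorem \ref{t4} to $(G, \circ_k)$ with parameters $n = r_k$ and an integer $m \geq 1$ chosen so that the prescribed recursion $r_{k+1} = 2 r_k + 1$ is matched by the formula $r_{k+1} = |(m+1)(r_k-1)| + 1$ from the theorem. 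This produces an operation $\circ_{k+1}$ on $G$ making $(G, \circ_k, \circ_{k+1})$ a $\lambda$-cyclic skew brace whose multiplicative group $(G, \circ_{k+1})$ is free of rank $r_{k+1} > r_k$.

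Since each skew brace $(G, \circ_k, \circ_{k+1})$ is $\lambda$-cyclic, hence $\lambda$-homomorphic with abelian image, Corollary \ref{cor-homo} implies it is symmetric; in particular $(G, \circ_{k+1}, \circ_k)$ is also a skew brace. Thus both oriented edges $e_{k,k+1}$ and $e_{k+1,k}$ lie in the edge set, in agreement with the graph $\Gamma$ introduced before the corollary. Iterating over all $k \in I = \{0\} \cup \mathbb{N}$ yields the desired symmetric brace system $\mathcal{B}_I(G) = (G, \{\circ_k\}_{k \in I})$.

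By construction, $r_0 < r_1 < r_2 < \cdots$ is strictly increasing, and free groups of distinct finite ranks are pairwise non-isomorphic (for instance, by comparing the ranks of their abelianizations $\mathbb{Z}^{r_k}$). Hence the groups $(G, \circ_k)$ are pairwise non-isomorphic, which is the assertion of the corollary. The main subtlety lies in the inductive step: to apply Theorem \ref{t4} at level $k+1$, one needs an explicit free basis of $(G, \circ_k)$, together with a choice of $m$ realising the target rank $r_{k+1}$. Both are available because the proof of Theorem \ref{t4} is constructive, producing an explicit basis of $\Ker \lambda$ together with the coset generator $s$; hence the iteration is effective at every level.
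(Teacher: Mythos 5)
Your proposal is essentially the paper's own argument: the paper also iterates Theorem \ref{t4} (taking $m=1$ at every level) to obtain a chain of symmetric skew braces $(F_2, \circ_k, \circ_{k+1})$ with $(F_2,\circ_k)$ free of strictly increasing rank $r_k$, and then concludes pairwise non-isomorphism from the distinctness of the ranks, exactly as you do. The only quibble is your requirement that $m$ be chosen so that $|(m+1)(r_k-1)|+1$ equals the prescribed value $2r_k+1$, which is not solvable in integers for general $r_k$; this is immaterial, since any admissible choice (e.g.\ $m=1$) already makes the ranks strictly increase, which is all the conclusion needs.
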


\bigskip

\section{Rota--Baxter operators and brace systems}

 L. Guo, H. Lang, Y. Sheng  \cite{Guo2020} defined the notion of Rota--Baxter operator
 on a group. A map $B\colon G\to G$ is called a \emph{Rota--Baxter operator} (of weight~1) if
$$
B(g)B(h) = B( g B(g) h B(g)^{-1} )
$$
 for all $g, h\in G$. A group $G$ with a Rota--Baxter operator $B$ is called a \emph{Rota--Baxter group}.

A new binary operation $\circ \colon G \to G$ on a Rota--Baxter group $(G,B)$
was defined in \cite{Guo2020} .

\begin{proposition}[\cite{Guo2020}]\label{prop:Derived}
Let $(G, \cdot, B)$ be a Rota--Baxter group.

a) The pair $(G, \circ )$ with the product
\begin{equation}\label{R-product}
g\circ h = gB(g)hB(g)^{-1},
\end{equation}
where $g,h\in G$, is also a group.

b) The operator $B$ is a Rota--Baxter operator on the group $(G,\circ)$.

c) The map $B\colon (G,\circ) \to (G,\cdot)$
is a homomorphism of Rota--Baxter groups.
\end{proposition}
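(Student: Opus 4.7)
The plan is to prove the three assertions in the order (a), (c), (b), since once $(G, \circ)$ is shown to be a group part (c) falls out with no additional work and then makes (b) essentially a one-line verification. The universal tool is the reformulation of the Rota--Baxter axiom in terms of the derived product:
\[
B(g \circ h) \;=\; B\bigl(g B(g) h B(g)^{-1}\bigr) \;=\; B(g) B(h),
\]
which I would record at the outset.

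For (a), I would first show $B(e) = e$ by plugging $g = h = e$ into the Rota--Baxter axiom, giving $B(e) B(e) = B(B(e) B(e)^{-1}) = B(e)$; then $e$ is a two-sided $\circ$-identity by direct substitution in the definition of $\circ$. Associativity follows by expanding both $(g \circ h) \circ k$ and $g \circ (h \circ k)$ and using the displayed identity to collapse $B(g \circ h) = B(g) B(h)$; both sides should reduce to $g B(g) h B(h) k B(h)^{-1} B(g)^{-1}$. For inverses, I would propose the formula $g^{\circ(-1)} = B(g)^{-1} g^{-1} B(g)$ (read off by solving $g \circ x = e$ inside $(G, \cdot)$) and check directly that $g \circ g^{\circ(-1)} = g B(g) \cdot B(g)^{-1} g^{-1} B(g) \cdot B(g)^{-1} = e$; existence of a right identity together with right inverses and associativity gives a group structure.

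Part (c) is then immediate: the displayed identity above reads $B(g \circ h) = B(g) B(h)$, so $B : (G, \circ) \to (G, \cdot)$ is a group homomorphism. Since the Rota--Baxter operator on both source and target is the same map $B$, the intertwining condition required of a morphism of Rota--Baxter groups is automatic.

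For (b), I would use (c) to collapse the right side of the Rota--Baxter axiom for $(G, \circ)$. The left side expands as $B(g) \circ B(h) = B(g) \cdot B(B(g)) \cdot B(h) \cdot B(B(g))^{-1}$ directly from the definition of $\circ$. The right side is
\[
B\bigl(g \circ B(g) \circ h \circ B(g)^{\circ(-1)}\bigr) \;=\; B(g) \cdot B(B(g)) \cdot B(h) \cdot B\bigl(B(g)^{\circ(-1)}\bigr),
\]
where I used that $B$ is a $\circ$-to-$\cdot$ homomorphism by (c). Finally $B(B(g)^{\circ(-1)}) = B(B(g))^{-1}$ since homomorphisms preserve inverses, and the two sides agree. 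The main obstacle, modest in size, is the bookkeeping in (a)---specifically verifying associativity and checking the inverse formula without sign errors---since once the rewritten axiom $B(g \circ h) = B(g)B(h)$ is isolated at the start, the rest is essentially forced.
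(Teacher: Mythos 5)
Your proposal is correct. Note that the paper does not prove this proposition at all: it is quoted from Guo--Lang--Sheng \cite{Guo2020}, so there is no in-paper argument to compare with; your verification (isolating $B(g\circ h)=B(g)B(h)$ as a restatement of the Rota--Baxter axiom, checking $B(e)=e$, associativity, the inverse formula $g^{\circ(-1)}=B(g)^{-1}g^{-1}B(g)$ --- which agrees with the formula the paper later records in Proposition \ref{for} --- and then collapsing the axiom for $(G,\circ)$ via the homomorphism property) is the standard direct proof and each computation checks out. The only cosmetic caveat is ordering: stating (c) as ``homomorphism of Rota--Baxter groups'' formally presupposes (b), but since you only use the group-homomorphism identity $B(g\circ h)=B(g)B(h)$, which is independent of (b), there is no actual circularity.
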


As observed in \cite[Proposition 3.1]{BG-1}, given a Rota--Baxter group $(G, \cdot, B)$, we get a skew left brace $(G, \cdot, \circ)$, where  `$\circ$' is defined  \eqref{R-product}. Such a skew brace will be called a Rota--Baxter skew brace.
The following result provides a procedure for converting a word written in $(G, \circ)$ to a word written in $G$.

\begin{proposition}[\cite{BG}] \label{for}
Let $(G, B)$ be a Rota--Baxter group.
If $A$ is a subset of $G$ and
$$
w = a_{i_1}^{\circ(k_1)} \circ a_{i_2}^{\circ(k_2)} \circ \ldots
    \circ a_{i_s}^{\circ(k_s)},\quad a_{i_j} \in A,\ k_j \in \mathbb{Z},
$$
is presented by a word under the operation $\circ$, then
\begin{equation*}\label{WordInG_B}
w {=} (a_{i_1}B(a_{i_1}))^{k_1}
    (a_{i_2}B(a_{i_2}))^{k_2} \ldots
    (a_{i_s}B(a_{i_s}))^{k_s}
    B(a_{i_s})^{-k_s}
    B(a_{i_{s-1}})^{-k_{s-1}} \ldots
    B(a_{i_1})^{-k_1}.
\end{equation*}
In particular,
$a^{\circ(-1)} = B(a)^{-1} a^{-1} B(a)$.
\end{proposition}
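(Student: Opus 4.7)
The plan is to prove the formula by induction on the length $s$ of the word, with the base case $s = 1$ being the assertion $a^{\circ(k)} = (aB(a))^k B(a)^{-k}$ for all $k \in \mathbb{Z}$. I would organize this as two stages.

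\textbf{Stage 1: The single-generator formula.} First I would establish, for all $a \in G$ and $k \in \mathbb{Z}$, the identity
$$a^{\circ(k)} = (aB(a))^k B(a)^{-k}.$$
For $k \ge 1$, I proceed by induction. The base $k=1$ is trivial. For the inductive step, I use Proposition \ref{prop:Derived}(c), namely that $B\colon (G,\circ)\to(G,\cdot)$ is a group homomorphism, to deduce $B(a^{\circ(k)}) = B(a)^k$. Then
$$a^{\circ(k+1)} = a^{\circ(k)} \circ a = a^{\circ(k)}\, B(a^{\circ(k)})\, a\, B(a^{\circ(k)})^{-1} = (aB(a))^k B(a)^{-k}\cdot B(a)^k \cdot a \cdot B(a)^{-k},$$
and the middle $B(a)^{-k}B(a)^k$ telescopes, yielding $(aB(a))^k \cdot a \cdot B(a)^{-k} = (aB(a))^{k+1}B(a)^{-(k+1)}$ after inserting $B(a)B(a)^{-1}$. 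For $k = -1$, a direct verification that $a \circ \bigl(B(a)^{-1}a^{-1}B(a)\bigr) = e$ using the definition \eqref{R-product} gives $a^{\circ(-1)} = B(a)^{-1}a^{-1}B(a) = (aB(a))^{-1}B(a)$, as required; negative powers then follow by iterating this, or by noting that $a^{\circ(-k)} = (a^{\circ(-1)})^{\circ(k)}$ and applying the positive case to the element $B(a)^{-1}a^{-1}B(a)$.

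\textbf{Stage 2: Induction on $s$.} The $s=1$ case is Stage 1. For the inductive step, write $w = a_{i_1}^{\circ(k_1)} \circ w'$ where $w' = a_{i_2}^{\circ(k_2)} \circ \cdots \circ a_{i_s}^{\circ(k_s)}$. By the inductive hypothesis $w'$ equals
$$\bigl(a_{i_2}B(a_{i_2})\bigr)^{k_2}\cdots\bigl(a_{i_s}B(a_{i_s})\bigr)^{k_s} B(a_{i_s})^{-k_s}\cdots B(a_{i_2})^{-k_2}.$$
Using that $B$ is a homomorphism from $(G,\circ)$ to $(G,\cdot)$, I get $B\bigl(a_{i_1}^{\circ(k_1)}\bigr) = B(a_{i_1})^{k_1}$. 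Expanding
$$w = a_{i_1}^{\circ(k_1)}\, B\bigl(a_{i_1}^{\circ(k_1)}\bigr)\, w'\, B\bigl(a_{i_1}^{\circ(k_1)}\bigr)^{-1}$$
by the definition of $\circ$ and substituting $a_{i_1}^{\circ(k_1)} = (a_{i_1}B(a_{i_1}))^{k_1}B(a_{i_1})^{-k_1}$ from Stage 1, the factor $B(a_{i_1})^{-k_1} \cdot B(a_{i_1})^{k_1}$ cancels in the middle, producing
$$w = \bigl(a_{i_1}B(a_{i_1})\bigr)^{k_1} \cdot w' \cdot B(a_{i_1})^{-k_1},$$
which, upon inserting the inductive expression for $w'$, is exactly the claimed formula. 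The assertion $a^{\circ(-1)} = B(a)^{-1}a^{-1}B(a)$ is precisely the $s=1$, $k=-1$ instance handled in Stage 1.

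The only nontrivial point is Stage 1, where one must be careful with negative exponents; once the homomorphism property of $B\colon (G,\circ)\to(G,\cdot)$ is exploited to evaluate $B$ on $\circ$-powers, the rest of the argument is essentially a bookkeeping cancellation of the form $B(a)^{-k}\cdot B(a)^{k} = e$, which lets the interior $B$-powers telescope while leaving the symmetric outer $B(a_{i_j})^{-k_j}$ tail intact.
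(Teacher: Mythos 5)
Your proof is correct. Note that the paper itself offers no argument for this statement: it is imported verbatim from \cite{BG} as a cited result, so there is no in-paper proof to compare against. Your two-stage induction is the natural argument and it checks out: the single-generator formula $a^{\circ(k)}=(aB(a))^kB(a)^{-k}$ is verified for $k\ge 1$ by induction using $B(a^{\circ(k)})=B(a)^k$ (which legitimately follows from Proposition \ref{prop:Derived}(c)), the case $k=-1$ by the direct computation $a\circ\bigl(B(a)^{-1}a^{-1}B(a)\bigr)=aB(a)\,B(a)^{-1}a^{-1}B(a)\,B(a)^{-1}=e$, and general negative exponents by applying the positive case to $a^{\circ(-1)}$ together with $B(a^{\circ(-1)})=B(a)^{-1}$; the step from $s-1$ to $s$ letters then reduces, after the cancellation $B(a_{i_1})^{-k_1}B(a_{i_1})^{k_1}=e$, to $w=(a_{i_1}B(a_{i_1}))^{k_1}\,w'\,B(a_{i_1})^{-k_1}$, which reproduces the claimed expression exactly. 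The only inputs are the definition \eqref{R-product} and Proposition \ref{prop:Derived}(c), both of which you invoke correctly, so the argument is complete.
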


\medskip

Suppose that  $(G, \cdot, \circ_B)$ is a Rota--Baxter skew brace. The next proposition gives condition under which this skew brace is a symmetric skew brace.

\begin{proposition}
A Rota--Baxter skew brace  $(G, \cdot, \circ)$ is symmetric if and only if $B$ is an anti-homomorphism  modulo   $Z(G^{(\cdot)})$, that is,
$$
B(c)^{-1} B(a)^{-1} B(ca) \in Z(G^{(\cdot)})
$$
for all $a, c \in G$. In particular, if $B$ is an anti-homomorphism, then $(G, \cdot, \circ)$ is a symmetric skew brace.
\end{proposition}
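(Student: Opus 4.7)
The plan is to reduce the statement to Proposition \ref{prop1} by computing everything explicitly in terms of the Rota--Baxter operator $B$. First, I would compute the map $\lambda$ for the skew brace $(G,\cdot,\circ)$ defined by \eqref{R-product}. For any $a, b \in G$,
$$
\lambda_a(b) = a^{-1}(a\circ b) = a^{-1} \cdot a B(a) b B(a)^{-1} = B(a)\, b\, B(a)^{-1},
$$
so $\lambda_a$ is the inner automorphism of $(G,\cdot)$ induced by $B(a)$. This is the key structural observation: all the maps $\lambda_a$ are inner.

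Next, I would apply the Rota--Baxter axiom directly to $a\circ b$:
$$
B(a\circ b) = B\bigl(a\, B(a)\, b\, B(a)^{-1}\bigr) = B(a) B(b).
$$
Consequently, $\lambda_{a\circ b}$ is conjugation by $B(a)B(b)$, whereas $\lambda_{b\cdot a}$ is conjugation by $B(ba)$. By Proposition \ref{prop1}, $(G,\cdot,\circ)$ is a symmetric skew brace if and only if $\lambda_{a\circ b}=\lambda_{b\cdot a}$ for all $a,b\in G$, which in the present setting means that the two inner automorphisms induced by $B(a)B(b)$ and $B(ba)$ coincide. Since two inner automorphisms of a group agree exactly when the defining elements differ by a central element, this is equivalent to
$$
B(a)B(b)\, B(ba)^{-1} \in Z(G,\cdot) \quad \text{for all } a,b \in G.
$$

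Finally, I would translate this into the form asserted in the statement. Taking inverses and using that $Z(G,\cdot)$ is closed under inversion and conjugation, the displayed condition is equivalent to $B(ba)\, B(b)^{-1} B(a)^{-1} \in Z(G,\cdot)$, and conjugating by $B(a)B(b)$ yields the equivalent condition $B(b)^{-1}B(a)^{-1} B(ba) \in Z(G,\cdot)$. Relabeling $b$ as $c$ gives the stated criterion. The particular case where $B$ is a genuine anti-homomorphism is immediate, since then the expression $B(c)^{-1}B(a)^{-1}B(ca)$ equals the identity. There is no serious obstacle here; the only step that deserves a little care is the passage between the various cyclic arrangements of $B(a), B(b), B(ba)^{\pm 1}$, which is handled by observing that conjugation preserves the center.
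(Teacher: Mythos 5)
Your proposal is correct and follows essentially the same route as the paper: you compute $\lambda_a$ as conjugation by $B(a)$, use $B(a\circ b)=B(a)B(b)$ (which is part c) of Proposition \ref{prop:Derived}, here rederived directly from the Rota--Baxter axiom), and invoke the symmetry criterion $\lambda_{a\circ b}=\lambda_{b\cdot a}$ of Proposition \ref{prop1}, reducing everything to a centrality condition on $B(c)^{-1}B(a)^{-1}B(ca)$. The only difference is cosmetic: your equivalence chain handles both directions explicitly, whereas the paper leaves the converse as an exercise.
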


\begin{proof}
 As we know a skew brace is symmetric if and only if
$$
\lambda_{a\circ c}(b) = \lambda_{c \cdot a}(b),~~~a, b, c \in G.
$$
Rewrite the left side using part c) of Proposition \ref{prop:Derived}, we get 
$$
\lambda_{a\circ c}(b) = B(a \circ c) b B(a \circ c)^{-1} = B(a) B(c) b B(a)^{-1} B(c)^{-1}.
$$
Rewriting the right side gives
$$
\lambda_{c \cdot a}(b) = B(c a) b B(c a)^{-1}.
$$
Hence,
$$
[b, B(c)^{-1} B(a)^{-1} B(c a)] = 1.
$$
Since $b$ is arbitrary element of $G$, one gets  $B(c)^{-1} B(a)^{-1} B(c a) \in Z(G^{(\cdot)})$. 

Converse part is left as an exercise.
\end{proof}

\begin{cor}
If $B$ is an endomorphism of $G$ onto  some abelian subgroup of $G$, then  $(G, \cdot, \circ)$ is a symmetric  skew brace.
\end{cor}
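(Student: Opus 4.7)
The strategy is to reduce this immediately to the preceding proposition by verifying its sufficient condition, namely that $B$ is an anti-homomorphism. In fact, the hypothesis that $B(G)$ is an abelian subgroup forces $B$ to be simultaneously a homomorphism and an anti-homomorphism, so the ``In particular'' clause of the previous proposition applies directly.

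First I would write, for arbitrary $a, c \in G$,
\begin{equation*}
B(c a) = B(c) B(a),
\end{equation*}
using that $B$ is an endomorphism of $G$. Next, since $B(c), B(a) \in B(G)$ and $B(G)$ is abelian by assumption,
\begin{equation*}
B(c) B(a) = B(a) B(c).
\end{equation*}
Combining these two equalities gives $B(c a) = B(a) B(c)$ for all $a, c \in G$, which is exactly the statement that $B$ is an anti-homomorphism of $(G, \cdot)$.

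Having established this, the conclusion follows from the ``In particular'' clause of the preceding proposition: any Rota--Baxter skew brace whose operator is an anti-homomorphism is symmetric. There is no real obstacle here; the only subtlety is recognising that the hypothesis gives $B$ both homomorphism and anti-homomorphism properties simultaneously, so that $B(c)^{-1} B(a)^{-1} B(c a) = 1 \in Z(G, \cdot)$ holds trivially and the previous criterion is satisfied.
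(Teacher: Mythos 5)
Your proof is correct and is essentially the intended argument: being an endomorphism with abelian image makes $B$ an anti-homomorphism (equivalently $B(c)^{-1}B(a)^{-1}B(ca)=e$, which certainly lies in the center of $(G,\cdot)$), so the ``in particular'' clause of the preceding proposition applies. The only point worth a passing remark is that such a $B$ is indeed a Rota--Baxter operator (again because $B(B(g))$ and $B(h)$ lie in the abelian image), so the skew brace $(G,\cdot,\circ)$ exists in the first place; this is implicit in the corollary's setting.
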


The next lemma follows from the definition of the Rota--Baxter operator

\begin{lemma}
If a Rota--Baxter operator $B : G \to G$ is an anti-homomorphism, then
$$
[B(b), B^2(a) B(a)] = 1
$$
for all $a, b \in G$.
\end{lemma}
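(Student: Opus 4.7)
The plan is to start from the defining Rota--Baxter identity
$$B(g)B(h) = B\bigl(gB(g)hB(g)^{-1}\bigr)$$
and unfold the right-hand side using the assumed anti-homomorphism property $B(xy) = B(y)B(x)$. Applied four times to the product $g\cdot B(g)\cdot h\cdot B(g)^{-1}$ (reading factors in reverse order), this yields
$$B\bigl(gB(g)hB(g)^{-1}\bigr) = B(B(g)^{-1})\,B(h)\,B(B(g))\,B(g).$$
Next I would use the standard fact that an anti-homomorphism still satisfies $B(x^{-1})=B(x)^{-1}$, to rewrite $B(B(g)^{-1}) = B^2(g)^{-1}$, obtaining
$$B(g)B(h) = B^2(g)^{-1}\,B(h)\,B^2(g)\,B(g).$$

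The second step is purely algebraic rearrangement. Conjugating both sides suitably (multiplying on the left by $B^2(g)$ and on the right by $B(g)^{-1}$) gives
$$B^2(g)\,B(g)\,B(h)\,B(g)^{-1} = B(h)\,B^2(g),$$
and then multiplying both sides on the right by $B(g)$ yields
$$B^2(g)\,B(g)\,B(h) = B(h)\,B^2(g)\,B(g).$$
This is exactly the commutator relation $[B(h),\,B^2(g)B(g)] = 1$. Relabelling $g = a$ and $h = b$ gives the conclusion.

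I do not expect a real obstacle here: the only subtle point is remembering that anti-homomorphisms preserve inversion (so that the inner $B(g)^{-1}$ becomes $B^2(g)^{-1}$ rather than something more exotic) and being careful with the order reversal when applying $B$ to a four-fold product. Once those are handled, the identity drops out immediately from the Rota--Baxter axiom.
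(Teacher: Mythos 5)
Your proof is correct and coincides with what the paper intends: the paper states this lemma without proof, remarking only that it follows from the definition of the Rota--Baxter operator, and your computation (expanding $B(gB(g)hB(g)^{-1})$ by the anti-homomorphism property, using $B(x^{-1})=B(x)^{-1}$, and rearranging) is exactly that verification.
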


Connection between Rota--Baxter skew braces
and $\lambda$-homomorphic skew left braces is given by

\begin{proposition}[\cite{BG-1}]
A Rota--Baxter skew brace $(G,\cdot,\circ)$  is $\lambda$-homomorphic 
if and only if $B(ac)^{-1}B(a)B(c)\subseteq Z(G^{(\cdot)})$
for all $a,c\in G$.
\end{proposition}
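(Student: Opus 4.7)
The plan is to unravel the definition of $\lambda$ in a Rota--Baxter skew brace so that the equivalence becomes a direct manipulation of centralizers. Recall that in $(G,\cdot,\circ)$ we have $a\circ b = a B(a) b B(a)^{-1}$, hence
$$
\lambda_a(b) \;=\; a^{-1}\cdot(a\circ b) \;=\; a^{-1}\,a\,B(a)\,b\,B(a)^{-1} \;=\; B(a)\,b\,B(a)^{-1}.
$$
So $\lambda_a$ is simply conjugation in $(G,\cdot)$ by $B(a)$. With this identification, everything reduces to a comparison of two conjugations.

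Next, I would write out what $\lambda$-homomorphicity demands. For all $a,b,c\in G$ we need $\lambda_{ac}(b)=\lambda_a\lambda_c(b)$, i.e.
$$
B(ac)\,b\,B(ac)^{-1} \;=\; B(a)B(c)\,b\,B(c)^{-1}B(a)^{-1}.
$$
Rewriting this as
$$
\bigl(B(c)^{-1}B(a)^{-1}B(ac)\bigr)\,b\,\bigl(B(c)^{-1}B(a)^{-1}B(ac)\bigr)^{-1} \;=\; b,
$$
one sees that the identity holds for every $b\in G$ precisely when $B(c)^{-1}B(a)^{-1}B(ac)\in Z(G^{(\cdot)})$. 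Since an element of a group lies in the center if and only if its inverse does, this condition is equivalent to $B(ac)^{-1}B(a)B(c)\in Z(G^{(\cdot)})$, which is the stated criterion.

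The converse direction is the same chain of equivalences read backwards: if $B(ac)^{-1}B(a)B(c)$ is central for all $a,c$, then the commutator identity above collapses and $\lambda_{ac}=\lambda_a\lambda_c$ on every element of $G$. There is no real obstacle beyond keeping track of inverses and the fact that ``$b$ arbitrary'' is what converts a commutation relation into a centrality statement; the whole argument is a one-line computation once the formula $\lambda_a(\cdot)=B(a)(\cdot)B(a)^{-1}$ is in hand.
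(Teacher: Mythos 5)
Your argument is correct: $\lambda_a$ is conjugation by $B(a)$ in $(G,\cdot)$, so $\lambda_{a\cdot c}=\lambda_a\lambda_c$ for all $a,c$ holds exactly when $B(c)^{-1}B(a)^{-1}B(ac)$ (equivalently its inverse $B(ac)^{-1}B(a)B(c)$) centralizes every $b\in G$, which is the stated criterion. The paper itself only cites \cite{BG-1} for this proposition, but your computation is essentially the same one-line conjugation argument the paper uses for the neighbouring criterion on symmetric Rota--Baxter skew braces, so there is nothing further to add.
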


\medskip

The following concept  of  a skew left multibrace was introduced in \cite{BG-1}.

\begin{definition}
Let $k$ be a~natural number.
By  a {\it skew left $k$-brace} we mean a $(k+1)$-groupoid
$(G, \circ_0, \circ_1, \ldots,\circ_k)$, i.\,e.,
a~non-empty set~$G$ with $k+1$ binary algebraic operations, such that

1) $(G, \circ_i)$ is a~group for all $i=0,1,\ldots,k$;

2) for $0 < i \leq k$
$$
a \circ_{i} (b \circ_{i-1} c)
 = (a \circ_{i} b) \circ_{i-1} a^{\circ_{i-1}(-1)} \circ_{i-1} (a \circ_{i} c),
$$
where $a^{\circ_{i-1}(-1)}$ is the inverse to $a$ in the group $(G, \circ_{i-1})$.
\end{definition}

A~skew left 1-brace is just the skew left brace.
By a {\it skew left multibrace} we mean a~skew left $k$-brace for some $k>1$. In our terminology skew left multibrace is a brace system.

On a Rota--Baxter group one can construct multibrace using the following proposition.

\begin{proposition}[\cite{BG-1}]
Let $(G, \cdot, B)$ be an RB-group and $k$ be a~natural number.
Define on the set~$G$ binary operations $\circ_1,\circ_2,\ldots,\circ_k$ as follows,
$$
x \circ_{i+1} y = x \circ_i B(x) \circ_i y \circ_i (B(x))^{\circ_i(-1)},
$$
where $\circ_0 = \cdot$. Then
$(G,\cdot,\circ_1, \circ_2,\ldots,\circ_k)$ is a~skew left $k$-brace.
\end{proposition}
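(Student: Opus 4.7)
The plan is to prove this by induction on $k$, bootstrapping on Proposition \ref{prop:Derived}, which tells us two crucial things: (a) starting from an RB-group $(G,\cdot,B)$ the derived operation \eqref{R-product} yields a new group, and (b) the \emph{same} map $B$ is again a Rota--Baxter operator on that new group. The second fact is what will allow us to iterate the construction without any new work.

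Base case $k=1$: we need $(G,\cdot,\circ_1)$ to be a skew left brace. This is exactly the Rota--Baxter skew brace observation recalled just after Proposition \ref{prop:Derived}, since $x\circ_1 y = x\cdot B(x)\cdot y\cdot B(x)^{-1}$ coincides with the derived product \eqref{R-product}.

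Inductive step: assume $(G,\circ_0,\circ_1,\ldots,\circ_k)$ is a skew left $k$-brace. All axioms involving pairs $(\circ_{i-1},\circ_i)$ for $1\le i\le k$ already hold by hypothesis, so to upgrade to a skew left $(k{+}1)$-brace we only need to check that $(G,\circ_{k+1})$ is a group and that $(G,\circ_k,\circ_{k+1})$ is a skew left brace. Here is where Proposition \ref{prop:Derived}(b) does the work: since $B$ is an RB-operator on $(G,\circ_{k-1})$ (a fact we carry along inductively, the base instance being $B$ on $(G,\cdot)$), part (b) of Proposition \ref{prop:Derived} tells us $B$ is also an RB-operator on the derived group $(G,\circ_k)$. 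In other words $(G,\circ_k,B)$ is itself a Rota--Baxter group.

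Now apply Proposition \ref{prop:Derived}(a) to this Rota--Baxter group $(G,\circ_k,B)$: its derived operation, which by \eqref{R-product} with $\circ_k$ in place of $\cdot$ is precisely
\[
x\circ_{k+1}y \;=\; x\circ_k B(x)\circ_k y\circ_k B(x)^{\circ_k(-1)},
\]
defines a group on $G$. Moreover, by the Rota--Baxter skew brace construction (the base case applied now at level $k$), $(G,\circ_k,\circ_{k+1})$ is a skew left brace. This furnishes the missing axiom and completes the induction.

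The proof therefore really has no serious obstacle: everything reduces to repeated invocation of Proposition \ref{prop:Derived} with $\cdot$ replaced by $\circ_i$ at each stage. The one point that must be kept clean is the \emph{persistence} of the Rota--Baxter identity for $B$ across all the operations $\circ_i$, which is ensured once and for all by part (b) of that proposition; once this is acknowledged, the recursion runs mechanically and yields the skew left $k$-brace structure $(G,\cdot,\circ_1,\ldots,\circ_k)$ for every $k$.
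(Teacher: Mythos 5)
Your proof is correct: the definition of a skew left $k$-brace only imposes compatibility between consecutive operations $\circ_{i-1},\circ_i$, so iterating Proposition \ref{prop:Derived} (parts (a) and (b)) together with the Rota--Baxter skew brace observation \cite[Proposition 3.1]{BG-1} at each level is exactly what is needed, and your induction carries the persistence of $B$ as a Rota--Baxter operator correctly. The paper itself states this result as a citation to \cite{BG-1} without proof, and your argument is precisely the intended one, so there is nothing to add.
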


If we take a skew brace $(G, \circ_i, \circ_{i+1})$, then
$$
\lambda^{(i)}_{x}(y) = B(x) \circ_i y \circ_i  B(x)^{\circ_i(-1)}.
$$
Just to demonstrate, let us rewrite first two new operations in  terms of original operation $\cdot$. We have
$$
x \circ y = x \circ_{1} y = x \cdot B(x) \cdot y \cdot (B(x))^{-1},
$$
and
$$
\lambda_{x}(y) = \lambda^{(0)}_{x}(y) = B(x) y B(x)^{-1}
$$

Further,
$$
x \circ_{2} y = x \circ_1 B(x) \circ_1 y \circ_1 (B(x))^{\circ_1(-1)},
$$
and by Proposition \ref{for}, using the equality $ (B(x))^{\circ_1(-1)} =  (B^2(x))^{-1} B(x)^{-1} B^2(x)$, we get
$$
x \circ_{2} y = x (B(x))^2 B^2(x) y B(y) (B^2(x))^{-1} B(x)^{-1} B^2(x) B(y)^{-1} (B^2(x))^{-1} B(x)^{-1}.
$$
In this case
\begin{eqnarray*}
\lambda^{(1)}_{x}(y) &=& B(x) \circ y \circ  B(x)^{\circ(-1)} = B(x) \circ y \circ  \left(  (B^2(x))^{-1} B(x)^{-1} B^2(x) \right) \\
&=& B(x) B^2(x) y B(y) (B^2(x))^{-1} B(x)^{-1} B^2(x) B(y)^{-1} (B^2(x))^{-1}.
\end{eqnarray*}

\medskip

Let $(G, \cdot)$ be a group. Then the map $B(g) = g^{-1}$, $g \in G$, is a Rota--Baxter operator on $G$. We see that $B$ is an anti-homomorphism. The skew brace constructed from the Rota--Baxter group $(G, \cdot, B)$ is the one as in Example \ref{example1}. We conclude with another concrete example.

\begin{example}
Let $G = F_2(x, y)$ be a free group of rank 2. The homomorphism $B : G \to \langle x \rangle$, which is defined on the generators by $B(x) = x$, $B(y) = x$ is a Rota--Baxter operator on $G$. Let us construct multi-brace which it defines. For $a, b \in G$, we define
$$
a \circ b =  a x^{l(a)} b x^{-l(a)},
$$
where $l(a)$ is as defined in the preceding section. Inductively, for any integer $m \ge 1$, we can define binary operations
$$
a \circ_m b =  a x^{ml(a)} b x^{-ml(a)}.
$$
It is not difficult to see that $(G, \cdot, \circ_1, \circ_2, \ldots)$ is a multibrace with infinitely many binary operations.
\end{example}

We close with the comment that connection between  Rota--Baxter  skew braces and skew braces constructed from $\gamma$-functions (closely related  to $\lambda$-homomorphic skew braces) in terms of second cohomology group has been very recently investigated in \cite{CS}.

\bigskip

\noindent{\it Acknowledgements.}  The first author is supported by the Ministry of Science and Higher Education of Russia (agreement No. 075-02-2021-1392). The first and second named authors acknowledge the support of RFBR (project No. 19-01-00569). The third named author acknowledges the support of DST-SERB Grant MTR/2021/000285. The authors are grateful to the unanimous referee for making several useful suggestions, which improved the quality of the manuscript. The authors thank Lorenzo Stefanello for pointed out a gap in an argument  (regarding step of poly-triviality of symmetric skew braces) at the end of Section 6 (of  earlier version) of this article (which we have modified now).

\end{document}